\documentclass{amsart}
\usepackage{amsmath}%
\usepackage{amsthm}
\usepackage{amsfonts}%
\usepackage{amssymb}%
\usepackage{graphicx}
\usepackage{tikz-cd}
\usetikzlibrary{cd}

\usepackage{mathrsfs}

\DeclareFontFamily{OT1}{pzc}{}
\DeclareFontShape{OT1}{pzc}{m}{it}{<-> s * [1.10] pzcmi7t}{}
\DeclareMathAlphabet{\mathpzc}{OT1}{pzc}{m}{it}

\newtheorem{theorem}{Theorem}[section]

\newtheorem{corollary}[theorem]{Corollary}

\newtheorem{definition}[theorem]{Definition}

\newtheorem{lemma}[theorem]{Lemma}

\newtheorem{proposition}[theorem]{Proposition}
\newtheorem{remark}[theorem]{Remark}

\numberwithin{equation}{section}
\def\Xint#1{\mathchoice
{\XXint\displaystyle\textstyle{#1}}%
{\XXint\textstyle\scriptstyle{#1}}%
{\XXint\scriptstyle\scriptscriptstyle{#1}}%
{\XXint\scriptscriptstyle\scriptscriptstyle{#1}}%
\!\int}
\def\XXint#1#2#3{{\setbox0=\hbox{$#1{#2#3}{\int}$}
\vcenter{\hbox{$#2#3$}}\kern-.5\wd0}}

\def\dashint{\Xint-}

\newcommand{\R}{\mathbb{R}}

\newcommand{\N}{\mathbb{N}}
\newcommand{\Z}{\mathbb{Z}}

\newcommand{\F}{\mathcal{F}}
\newcommand{\Ha}{\mathcal{H}}

\newcommand{\G}{\mathscr G}

\newcommand{\spt}{\operatorname{spt}}

\newcommand{\dist}{\operatorname{dist}}
\newcommand{\diam}{\operatorname{diam}}
\newcommand{\inte}{\operatorname{int}}

\newcommand{\lip}{\operatorname{lip}}
\newcommand{\Lip}{\operatorname{Lip}}
\newcommand{\LIP}{\operatorname{LIP}}
\newcommand{\ran}{\operatorname{im}}
\newcommand{\Parcon}{\Gamma_{\operatorname{pc,c}}^k}
\newcommand{\ud}{\mathrm {d}}
\newcommand{\id}{\mathrm {id}}
\newcommand{\cl}{\mathrm {cl}}

\newcommand{\inv}{^{-1}}
\newcommand{\Rep}{\mathrm{Rep}}
\newcommand{\Poly}{{\sf Poly}}
\newcommand{\Polys}{\mathpzc{Poly}}
\newcommand{\FillVol}{\operatorname{FillVol}}

\newcommand{\loc}{\mathrm{loc}}

\newcommand{\sD}{\mathscr D}


\title[Homology of BLD-elliptic spaces]{Pull-back of metric currents and homological boundedness of BLD-elliptic spaces}

\thanks{P.P. was supported in part by the Academy of Finland project \#297258. }
\keywords{Metric currents, BLD-mappings}
\subjclass[2010]{30L10, 49Q15, 30C65}

\author{Pekka Pankka}
\address{P.O. Box 64 (Gustaf H\"allstr\"omin katu 2a), FI-00014 University of Helsinki, Finland}
\email{pekka.pankka@helsinki.fi}

\author{Elefterios Soultanis}
\address{SISSA, Via Bonomea 265, 34136, Trieste, Italy, and\newline
University of Fribourg, Chemin du Musee 23, CH-1700, Fribourg, Switzerland}
\email{elefterios.soultanis@gmail.com}

\date{\today}

\begin{document}
	
\begin{abstract}
Using the duality of metric currents and polylipschitz forms, we show that a BLD-mapping $f\colon X\to Y$ between oriented cohomology manifolds $X$ and $Y$ induces a pull-back operator $f^\ast \colon M_{k,\loc}(Y) \to M_{k,\loc}(X)$ between the spaces of metric $k$-currents of locally finite mass. For proper maps, the pull-back is a right-inverse (up to multiplicity) of the push-forward $f_\ast \colon M_{k,\loc}(X)\to M_{k,\loc}(Y)$.
	
As an application we obtain a non-smooth version of the cohomological boundedness theorem of Bonk and Heinonen for locally Lipschitz contractible cohomology $n$-manifolds $X$ admitting a BLD-mapping $\R^n \to X$. 
\end{abstract}

\maketitle
	
\section{Introduction}

In this article we prove a metric variant of a theorem of Bonk--Heinonen for mappings of bounded length distortion (\emph{BLD-maps} for short). 
The theorem of Bonk and Heinonen \cite[Theorem 1.1]{bonk01} states that, \emph{if $f:\R^n\to M$ is a non-constant $K$-quasiregular map into a closed and oriented Riemannian $n$-manifold $M$, then there is a constant $C(n,K)$ depending only on $n$ and $K$, such that the de Rham cohomology groups have the dimension bound}
\[
\dim H^k(M)\le C(n,K),\quad k=0,1,\ldots,n.
\]
Recently Prywes \cite{pry18} affirmed the sharp bound $C(n,K)=2^n$, conjectured in \cite{bonk01}; see Kangasniemi \cite{kan17} for a similar result in a dynamical context of uniformly quasiregular mappings. 

Recall that a continuous, open and discrete map $f\colon X\to Y$ between metric spaces is an \emph{$L$-BLD map,} for $L\ge 1$, if it satisfies the \emph{bounded length distortion estimate}
\begin{equation}\label{BLD}
\frac{1}{L} \ell(\gamma) \le \ell(f\circ \gamma) \le L \ell(\gamma)
\end{equation}
for each path $\gamma$ in $X$, where $\ell(\cdot)$ is the length of a path. We call a map $f\colon X\to Y$ simply a \emph{BLD-map} if it is $L$-BLD for some $L\ge 1$. Condition (\ref{BLD}) may be regarded as a locally non-injective variant of the bi-Lipschitz condition. Indeed, every bi-Lipschitz bijection is a BLD-map.

\begin{theorem}
	\label{thm: main}
	Suppose $X$ is a compact, geodesic, and locally Lipschitz contractible oriented cohomology manifold which admits an $L$-BLD map $f \colon \R^n\to X$. Then there exists a constant $C(n,L)$ depending only on $n$ and $L$ such that 
	\[ 
	\dim H_k(X)\le C(n,L)
	\] 
	for all $k=0,\ldots, n$.
\end{theorem}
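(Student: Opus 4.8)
The plan is to transplant the argument of Bonk and Heinonen \cite{bonk01} to the metric setting, using the pull-back of metric currents --- dually, of polylipschitz forms --- in place of the pull-back of differential forms. We argue degree by degree. Fix $k\in\{0,\dots,n\}$ and put $m:=\dim H_k(X)$. Since $X$ is a compact, locally Lipschitz contractible cohomology $n$-manifold, all its (co)homology groups are finite-dimensional and $\dim H^k(X)=m$ by the universal coefficient theorem over $\R$. As $X$ is geodesic it is connected, so $H_0(X)\cong\R$, while $H_n(X)\cong\R$ by orientability; hence we may assume $1\le k\le n-1$. Combining Poincar\'e duality for the oriented cohomology manifold $X$ with the de Rham-type duality between metric $j$-currents of locally finite mass and closed polylipschitz $j$-forms developed above, we fix closed polylipschitz forms $\alpha_1,\dots,\alpha_m$ of degree $k$ and $\beta_1,\dots,\beta_m$ of degree $n-k$ that represent dual bases of $H^k(X)$ and $H^{n-k}(X)$, normalized so that $\int_X\alpha_i\wedge\beta_j=\delta_{ij}$. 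The $L$-BLD map $f$ endows $X$ with controlled local geometry (with constants depending only on $n$ and $L$), which guarantees that the $\alpha_i$ and $\beta_j$ can be chosen \emph{tame}, i.e.\ controlled up to constants depending only on $n$ and $L$ in the conformally invariant norms $\|\cdot\|_{L^{n/k}(X)}$ and $\|\cdot\|_{L^{n/(n-k)}(X)}$.

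We then pull back by $f\colon\R^n\to X$. The forms $f^*\alpha_i$ and $f^*\beta_j$ are closed polylipschitz forms on $\R^n$; by the Poincar\'e lemma for polylipschitz forms we may write $f^*\alpha_i=d\phi_i$, where $\phi_i$ is the primitive produced from $f^*\alpha_i$ by the radial homotopy operator. The bounded length distortion of $f$ --- in particular the pinching $c(L)\le J_f\le C(L)$ of the Jacobian and the change-of-variables formula with multiplicity --- controls $\|f^*\alpha_i\|_{L^{n/k}(B(0,\rho))}$ and $\|f^*\beta_j\|_{L^{n/(n-k)}(B(0,\rho))}$, and, via the conformal Poincar\'e--Sobolev inequality, $\|\phi_i\|_{L^{n/(k-1)}(B(0,\rho))}$, each by an explicit power of $\rho$ times the tame norms of $\alpha_i,\beta_j$ and a constant depending only on $n$ and $L$. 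For $\rho>0$ introduce the bilinear form
\[
A_{ij}(\rho):=\int_{B(0,\rho)}f^*(\alpha_i\wedge\beta_j),
\]
the pulled-back polylipschitz $n$-form $f^*(\alpha_i\wedge\beta_j)$ integrated over the ball $B(0,\rho)\subset\R^n$, and set $A(\rho)=\bigl(A_{ij}(\rho)\bigr)_{i,j=1}^m$.

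The heart of the proof is to compute $A(\rho)$ in two complementary ways. First, since $X$ is compact, $f|_{\overline{B(0,\rho)}}$ is proper, so the right-inverse-up-to-multiplicity property yields
\[
\int_{B(0,\rho)}f^*\eta=\int_X\mu_\rho\,\eta
\]
for every polylipschitz $n$-form $\eta$ on $X$, where $\mu_\rho\colon X\to\mathbb{Z}_{\ge 0}$ is the multiplicity function of $f$ on $B(0,\rho)$. Its average $\bar\mu_\rho=\operatorname{vol}(X)^{-1}\int_{B(0,\rho)}J_f$ is comparable to $\rho^n/\operatorname{vol}(X)$, hence tends to infinity, while the bounded local index of BLD-maps together with the estimate $\Ha^{n-1}\bigl(f(\partial B(0,\rho))\bigr)\lesssim_{n,L}\rho^{n-1}$ controls the oscillation of $\mu_\rho$. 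Therefore $A_{ij}(\rho)=\int_X\mu_\rho\,\alpha_i\wedge\beta_j=\bar\mu_\rho\,\delta_{ij}+(\text{lower-order error})$, so $A(\rho)$ is invertible for large $\rho$ and $\operatorname{rank}A(\rho)=m$. Second, because $f^*\alpha_i=d\phi_i$ while $f^*\beta_j$ is closed, Stokes' theorem for currents collapses $A_{ij}(\rho)$ to an integral over the sphere,
\[
A_{ij}(\rho)=\int_{B(0,\rho)}d\bigl(\phi_i\wedge f^*\beta_j\bigr)=\int_{\partial B(0,\rho)}\phi_i\wedge f^*\beta_j.
\]
Averaging over $\rho\in[\rho_0,2\rho_0]$ to select a sphere on which all the relevant traces satisfy the conformal estimates simultaneously, one bounds this integral by H\"older's inequality on the sphere with conformally matched exponents together with the bounds of the previous paragraph; exactly as in \cite{bonk01}, the resulting estimate on $A(\rho)$ is incompatible with $A(\rho)$ being invertible once $m$ exceeds a threshold $C(n,L)$. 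Comparing the two computations yields $m\le C(n,L)$.

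The main obstacle is the passage from the smooth de Rham calculus of \cite{bonk01} to its polylipschitz analogue, and it has three components, all of which must be carried through with constants depending only on $n$ and $L$: (i) a Poincar\'e lemma for polylipschitz forms on Euclidean balls, with conformally sharp bounds on the radial-homotopy primitives; (ii) the existence of tame, conformally controlled representatives of $H^k(X)$ and $H^{n-k}(X)$, resting on the $(n,L)$-controlled local geometry $f$ imposes on $X$; and (iii) the value-distribution input for BLD-maps --- the Jacobian pinching, the bounded local index, and the $\Ha^{n-1}$-estimate on $f(\partial B(0,\rho))$ --- used to isolate the leading term $\bar\mu_\rho I$ of $A(\rho)$ and control its error. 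Finally, the degenerate endpoint $k=1$, where $n/(k-1)$ becomes infinite, is treated separately: since $\R^n$ is simply connected, $f$ lifts to a BLD-map $\R^n\to\widetilde X$ into the cover of $X$ on which $\pi_1(X)$ acts by deck transformations, which forces $\widetilde X$ --- hence $\pi_1(X)$, acting cocompactly --- to have polynomial volume growth of degree at most $n$, so that $\dim H_1(X)=\operatorname{rank}\bigl(\pi_1(X)^{\mathrm{ab}}\otimes\R\bigr)\le n$.
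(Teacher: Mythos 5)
There is a genuine gap. Your plan is a transplant of the conformally invariant integral argument of Bonk--Heinonen, but the two ingredients you yourself flag as ``the main obstacle'' --- (i) a Poincar\'e lemma for polylipschitz forms with conformally sharp bounds on radial primitives, and (ii) \emph{tame} representatives $\alpha_i,\beta_j$ of $H^k(X)$ and $H^{n-k}(X)$ with controlled $L^{n/k}$ and $L^{n/(n-k)}$ norms --- are precisely the hard analytic content of \cite{bonk01}, where they rest on nonlinear ($p$-harmonic) Hodge theory on a smooth Riemannian manifold. Nothing in the polylipschitz framework supplies them: polylipschitz forms are Alexander--Spanier-type germs of functions on $X^{k+1}$, with a cup product but no pointwise exterior algebra, no top-degree integration pairing $\int_X\alpha\smile\beta$ independent of a chosen fundamental current, no H\"older inequality at conformal exponents, and no Stokes formula of the form $\int_{B(0,\rho)}d(\phi_i\smile f^\#\beta_j)=\int_{\partial B(0,\rho)}\phi_i\smile f^\#\beta_j$ (the duality gives only $\widehat{\partial T}(\omega)=\widehat T(d\omega)$ against a current $T$, and the restriction of a current to a sphere of codimension one is not available for general normal currents). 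Asserting that the ``controlled local geometry'' imposed by the BLD map yields tame representatives is not a proof; it is the theorem in disguise. The only part of your three-component obstacle that is actually available is (iii), the equidistribution of the multiplicity function $f_\#\chi_{B(0,\rho)}\approx A_f(\rho)$, which is Theorem \ref{equidist}. (Your separate treatment of $k=1$ via polynomial growth of $\pi_1$ is plausible but likewise imports an unproved growth estimate for covers of BLD-elliptic cohomology manifolds, and is unnecessary.)

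For comparison, the argument actually used here avoids forms and duality entirely. One picks $m=\dim H_k(X)$ homology classes represented by mass-minimizing normal cycles $T_i$ with $M(T_i)=\F(T_i)=1$ and pairwise flat distance $\ge 1/2$ (Lemma \ref{mini}, which needs the filling inequality of Proposition \ref{main2}, hence local Lipschitz contractibility). The equidistribution and flat-norm estimates (Theorems \ref{equidist} and \ref{estimate}) show that the pull-backs $f^\ast T_i$, cut off to a ball $B(R)$ with $R\simeq_{n,L} \diam X$, are normal currents in $\R^n$ with uniformly bounded normal mass, common compact support, and pairwise flat-norm separation bounded below by a constant depending only on $n$, $L$, $k$. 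Federer--Fleming compactness in the flat norm (Theorem \ref{compactness}) then bounds the cardinality of such a separated family by $C(n,L)$. If you want to salvage your route, you would first have to build the conformally invariant Hodge-type theory for polylipschitz forms on cohomology manifolds; as written, the proposal defers exactly the steps that carry the difficulty.
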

Here the homology $H_k(X)$ is the $k^\textrm{th}$ current homology of $X$, defined using metric currents of Ambrosio and Kirchheim \cite{amb00}. For locally Lipschitz contractible spaces, the current homology $H_k(X)$ agrees with the standard singular homology of $X$. For the definition of oriented cohomology manifolds, we refer to Section \ref{sec: cohomology manifolds}.  


The key ingredient in our proof is the pull-back of metric currents. Heuristically, Theorem \ref{thm: locpull} below states the following:
\begin{itemize}
	\item[]\emph{A BLD-map between locally geodesic, oriented cohomology $n$-manifolds induces a natural pull-back operator on currents, which commutes with the boundary.}
\end{itemize}

In constructing the pull-back of currents, we use the duality of metric currents and polylipschitz forms, developed in \cite{teripekka1}, and develop a push-forward operator for polylipschitz forms under BLD-maps, analogous to a push-forward of differential forms on manifolds under quasiregular mappings discussed in \cite{Kangasniemi-Pankka}.


\subsection*{Pull-back of metric currents}
A Lipschitz map $f\colon X\to Y$ between locally compact spaces induces a push-forward
\begin{equation}\label{intropush}
f_*:M_{k}(X)\to M_{k}(Y),\quad T\mapsto T\circ(f\times\cdots\times f),
\end{equation}
between the spaces of finite mass $k$-currents $M_{k}(X)$ and $M_{k}(Y)$, respectively.

We construct the pull-back of currents by a BLD-map, using the duality of metric currents with polylipschitz forms, as an adjoint of a push-forward of polylipschitz forms. This relies on two key properties of BLD-maps between oriented cohomology manifolds.

Firstly, a BLD-map (and more generally a branched cover) $f\colon X\to Y$ between oriented cohomology manifolds admits a local index function $i_f \colon X\to \Z$. For a compactly supported Borel function $g:X\to \R$, we may define the push-forward $f_\#g:Y\to \R$ of $g$ by $f$ as the function 
\begin{equation}\label{eq: pushforward}
y\mapsto \sum_{x\in f\inv (y)}i_f(x)g(x).
\end{equation}

Secondly, if $X$ and $Y$ are locally geodesic, the lower estimate in \eqref{BLD} allows us to obtain a Lipschitz estimate for the push-forward of Lipschitz functions; see Lemma \ref{locpush}.

Using these properties we define a push-forward operator \[
f_\# \colon \Gamma^k_c(X) \to \Gamma^k_{\mathrm{pc,c}}(Y)
\]
from the space of polylipschitz forms to the space of partition continuous polylipschitz forms $\Gamma^k_{\mathrm{pc,c}}(Y)$. We refer to Section \ref{sec:polylip} for polylipschitz forms and Section \ref{sec:push-forward_forms} for the details on the push-forward. The pull-back $f^\ast T$ of a current $T\in M_{k,\loc}(Y)$ of locally finite mass is then obtained by the formula
\begin{equation}
\label{eq:pull-back_intro}
f^\ast T(\pi_0,\ldots, \pi_k) = \widehat T (f_\#[\pi_0,\ldots, \pi_k]),
\end{equation}
for $(\pi_0,\ldots, \pi_k)\in \LIP_c(X)\times\LIP_\infty(X)^k$; see \cite{lan11} for currents of locally finite mass on locally compact spaces. Here $\widehat T$ is the extension of $T$ to the space of piecewise continuous polylipschitz forms; see \cite[Theorem 1.3]{teripekka1}

\begin{theorem}\label{thm: locpull}
	Let $f:X\to Y$ be an $L$-BLD map between locally geodesic oriented cohomology manifolds, and let $k\in \N$. Then there is a natural and weakly sequentially continuous linear map 
	\[ 
	f^\ast: M_{k,loc}(Y)\to M_{k,loc}(X) 
	\] 
	having the following properties:
	\begin{enumerate}
		\item for a $k$-current $T\in M_{k,loc}(Y)$ and a precompact Borel set $E\subset X$, 
		\[
		f_\ast((f^\ast T)\lfloor E)=T\lfloor {f_\# \chi_E};
		\]
		\label{eq:locpull1}
		\item $\partial \circ f^* = f^* \circ \partial \colon N_{k,\loc}(Y) \to N_{k-1, \loc}(X)$; and
		\item for each $T\in M_{k,\loc}(Y)$, 
		\[
		\frac{1}{L^k} f^\ast\|T\|\le \|f^\ast T\|\le L^k f^\ast\|T\|,
		\]
		where $\|\cdot\|$ is the mass measure of a current.
	\end{enumerate}
\end{theorem}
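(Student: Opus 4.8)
The plan is to construct $f^\ast$ via the formula \eqref{eq:pull-back_intro} and then verify the three listed properties, with the bulk of the genuinely new work residing in the construction of the push-forward $f_\#$ on polylipschitz forms and the mass estimate (3), which in turn drives everything else. First I would set up $f_\#$ carefully: given the local index $i_f$, the push-forward of a compactly supported Borel function via \eqref{eq: pushforward} is well-defined because $f$ is a branched cover with locally constant-valued fibers away from the (codimension $\geq 2$) branch set, and $f_\#\chi_E$ is Borel with compact support in $f(E)$. Using Lemma \ref{locpush} (the Lipschitz estimate on pushed-forward Lipschitz functions, available because $X,Y$ are locally geodesic and $f$ satisfies the lower bound in \eqref{BLD}), one extends this to a map $f_\#\colon \Gamma^k_c(X)\to\Gamma^k_{\mathrm{pc,c}}(Y)$ sending a simple form $g\,d\pi_1\wedge\cdots\wedge d\pi_k$ to something built from $f_\#g$ and the (locally defined, index-weighted) push-forwards of the $\pi_i$; the output is only \emph{partition continuous} precisely because of the branching. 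I expect the main obstacle to be checking that $f_\#$ is well-defined and has the right regularity — that the index-weighted sums over fibers genuinely produce partition-continuous polylipschitz forms and that the resulting assignment is independent of the local choices — this is the technical heart and is presumably done in Section \ref{sec:push-forward_forms}.

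Granting $f_\#$, I would define $f^\ast T$ by \eqref{eq:pull-back_intro}, using the extension $\widehat T$ of $T$ to piecewise continuous polylipschitz forms from \cite[Theorem 1.3]{teripekka1}; linearity is immediate, and local finiteness of mass together with the continuity properties needed to conclude $f^\ast T\in M_{k,\loc}(X)$ follow from the mass estimate (3), so I would prove (3) first. For (3): by the duality theory, $\|f^\ast T\|$ is computed by testing against simple polylipschitz forms, and the pointwise comass of $f_\#[\pi_0,\ldots,\pi_k]$ relative to $\|T\|$ is controlled by the pointwise comass of $[\pi_0,\ldots,\pi_k]$ via the chain rule and the two-sided estimate \eqref{BLD} — each of the $k$ differentials $d\pi_i$ contributes a factor bounded between $L^{-1}$ and $L$ when transported under $f$, while the index weights and the change-of-variables for $f_\#$ account exactly for the measure $f^\ast\|T\|$ on the function slot. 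Summing/integrating gives $L^{-k} f^\ast\|T\|\le \|f^\ast T\|\le L^k f^\ast\|T\|$; this also shows $f^\ast T$ has locally finite mass since $f^\ast\|T\|$ is a Radon measure on $X$ (as $f$ is proper on compacta and $\|T\|$ is Radon).

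For property (1), I would unwind both sides on test tuples $(\pi_0,\ldots,\pi_k)$: by definition of push-forward \eqref{intropush}, $f_\ast((f^\ast T)\lfloor E)(\pi_0,\ldots,\pi_k) = (f^\ast T)\lfloor E\,(\pi_0\circ f,\ldots,\pi_k\circ f) = (f^\ast T)(\chi_E\pi_0\circ f, \pi_1\circ f,\ldots)$, which by \eqref{eq:pull-back_intro} equals $\widehat T(f_\#[\chi_E\,\pi_0\circ f,\pi_1\circ f,\ldots,\pi_k\circ f])$. The key identity is that $f_\#$ of a pulled-back form is a multiplication operator: $f_\#[(\chi_E\pi_0\circ f)\,d(\pi_1\circ f)\wedge\cdots] = (f_\#\chi_E)\,\pi_0\, d\pi_1\wedge\cdots\wedge d\pi_k$, because pushing forward $d(\pi_i\circ f)$ over the index-weighted fiber recovers $d\pi_i$ (this is essentially the "$f_\#\circ f^\ast = \mathrm{mult}$'' relation, the form-level shadow of the classical $f_\ast f^\ast = \deg f$). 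Then the right-hand side becomes $\widehat T((f_\#\chi_E)\,\pi_0\,d\pi_1\wedge\cdots\wedge d\pi_k) = T\lfloor f_\#\chi_E\,(\pi_0,\ldots,\pi_k)$ by definition of restriction of a current to a Borel function, which is exactly the claim.

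For property (2), boundary commutation, I would restrict to $T\in N_{k,\loc}(Y)$ (normal currents, so $\partial T$ again has locally finite mass) and test $\partial(f^\ast T)$ and $f^\ast(\partial T)$ against $(\pi_0,\ldots,\pi_{k-1})\in\LIP_c(X)\times\LIP_\infty(X)^{k-1}$. Using $\partial S(\pi_0,\ldots,\pi_{k-1}) = S(1,\pi_0,\ldots,\pi_{k-1})$ (suitably interpreted for locally finite mass currents, via cutoffs) and the defining formula \eqref{eq:pull-back_intro}, both sides reduce to $\widehat{\partial T}$ or $\widehat T$ evaluated on push-forwards of forms, so the identity follows once we know $f_\#$ commutes with the exterior derivative $d$ on polylipschitz forms, i.e. $f_\#(d\omega) = d(f_\#\omega)$ — and this in turn holds because $f_\#$ is defined fiberwise by the locally constant integer weights $i_f$, which commute with differentiation away from the branch set, the branch set being $\|T\|$- and $\|\partial T\|$-null by the dimension bound for cohomology manifolds (here it is essential that $X,Y$ are cohomology $n$-manifolds so the branch set has topological codimension at least two and hence is negligible for the relevant measures). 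The weak sequential continuity of $f^\ast$ is then a consequence of the mass estimate (3) together with the continuity of $\widehat{(\cdot)}$ and the fact that $f_\#$ maps into a fixed space of test forms. I anticipate that the delicate points throughout are (a) the well-definedness and regularity of $f_\#$, and (b) justifying the cutoff/locality arguments needed to pass between the finite-mass statements in \cite{amb00} and the locally-finite-mass setting of \cite{lan11}; the rest is bookkeeping with the duality of \cite{teripekka1}.
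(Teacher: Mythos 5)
Your overall architecture matches the paper's: build the index-weighted push-forward $f_\#$ on polylipschitz forms, define $f^\ast T=\widehat T\circ f_\#$, and deduce (1) from the projection formula $f_\#(\alpha\smile f^\#\beta)=f_\#\alpha\smile\beta$ and (2) from $f_\#\circ d=d\circ f_\#$. However, there is a genuine gap in your argument for the \emph{lower} bound in (3). You propose to get both inequalities from a two-sided pointwise comass estimate on $f_\#[\pi_0,\ldots,\pi_k]$, asserting that ``each of the $k$ differentials $d\pi_i$ contributes a factor bounded between $L^{-1}$ and $L$.'' Only the upper half of this is true: the push-forward at a point $y$ is a sum over the fiber $f^{-1}(y)$ weighted by local indices, and such sums can cancel, so there is no pointwise lower bound of the form $\|f_\#\omega\|_y\ge L^{-k}\,f_\#\|\omega\|(y)$ (take $\omega$ supported near two preimages of $y$ with opposite signs; then $f_\#\omega(y)$ can vanish while $f_\#\|\omega\|(y)>0$). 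More fundamentally, a lower bound on the mass measure $\|f^\ast T\|$ cannot be read off from an upper bound on how $f_\#$ distorts test forms; one must exhibit competitors. The paper obtains the lower bound by an entirely different mechanism: it first proves the restriction identity (1), then applies the standard mass estimate for push-forwards of currents under the $L$-Lipschitz map $f$ (in the form $\|f_\ast S\|\le L^k f_\ast\|S\|$, \cite[Lemma 4.6]{lan11}) to $S=(f^\ast T)\lfloor E$, giving
\[
f^\ast\|T\|(E)=\int_Y f_\#\chi_E\,\ud\|T\|=\|T\lfloor f_\#\chi_E\|(Y)=\|f_\ast((f^\ast T)\lfloor E)\|(Y)\le L^k\,\|f^\ast T\|(E)
\]
for every Borel set $E\subset X$. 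You would need to replace your pointwise argument for the lower inequality with something of this kind.

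Two smaller points. First, your justification of $f_\#(d\omega)=d(f_\#\omega)$ via negligibility of the branch set is the wrong mechanism: in the paper this is a purely algebraic identity for the Alexander--Spanier-type differential on polylipschitz functions over a normal domain (Lemma \ref{prext}), requiring only the summation formula for the local index, not any measure-zero considerations; as stated, your argument would also not explain why the identity holds on the branch set itself. Second, you do not address the naturality $(g\circ f)^\ast=f^\ast\circ g^\ast$ asserted in the theorem, which the paper derives from the uniqueness statement (Theorem \ref{unique}) together with the naturality of the push-forward of forms; this is not automatic, since $f_\#\pi$ need not be a continuous polylipschitz form, so one cannot simply compose the defining formulas.
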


The naturality of the map $f^* \colon M_{k,\loc}(Y) \to M_{k,\loc}(X)$ in the statement refers to the typical functorial properties of the pull-back, that is, given BLD-maps $f\colon X\to Y$ and $g\colon Y \to Z$ between locally geodesic oriented cohomology manifolds, we have the composition rule $f^* \circ g^*  = (g\circ f)^*$.

Property \eqref{eq:locpull1} in Theorem \ref{thm: locpull} characterizes the pull-back in the following sense.
\begin{theorem}\label{unique}
	Let $f:X\to Y$ be a BLD-map between geodesic oriented cohomology manifolds and $T\in M_{k,loc}(Y)$. Suppose $S\in M_{k,loc}(X)$ is such that \[ f_\ast(S\lfloor E)=T\lfloor f_\#\chi_E \] for all precompact Borel sets $E\subset X$. Then \[ S=f^\ast T. \]
\end{theorem}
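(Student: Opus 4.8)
The plan is to reduce the uniqueness statement to a pointwise statement about the action of the currents $S$ and $f^\ast T$ on tuples of Lipschitz functions, and to recover that action from the push-forward identity by choosing the Borel set $E$ appropriately (localizing around a single sheet of $f$). Since both $S$ and $f^\ast T$ lie in $M_{k,\loc}(X)$, it suffices to show $S(\pi_0,\dots,\pi_k) = f^\ast T(\pi_0,\dots,\pi_k)$ for all $(\pi_0,\dots,\pi_k)\in \LIP_c(X)\times \LIP_\infty(X)^k$; by additivity in the current it then suffices to check this locally, i.e. for tuples with $\spt \pi_0$ contained in a small ball $U$ over which $f$ is a disjoint union of ``sheets''. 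Using the structure theory of branched covers between cohomology manifolds, choose a precompact normal neighborhood so that $f\inv(f(U))\cap (\text{a neighborhood of }\spt\pi_0)$ splits into finitely many Borel pieces $E_1,\dots,E_m$, each mapped by $f$ in an index-preserving way, with $i_f$ constant (equal to $i_j$) on $E_j$.

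First I would record the key algebraic consequence of the hypothesis. For a precompact Borel set $E\subset X$, the defining property \eqref{eq:locpull1} of $f^\ast T$ (Theorem \ref{thm: locpull}(1)) and the hypothesis on $S$ both give
\[
f_\ast(S\lfloor E) \;=\; T\lfloor f_\#\chi_E \;=\; f_\ast((f^\ast T)\lfloor E).
\]
Thus $f_\ast((S - f^\ast T)\lfloor E) = 0$ for every precompact Borel $E$. Writing $R := S - f^\ast T \in M_{k,\loc}(X)$, the task is to deduce $R = 0$ from the vanishing of $f_\ast(R\lfloor E)$ for all such $E$.

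Next I would unwind $f_\ast(R\lfloor E) = 0$ by pairing against Lipschitz tuples on $Y$. For $(\sigma_0,\dots,\sigma_k) \in \LIP_c(Y)\times\LIP_\infty(Y)^k$ one has, by definition of push-forward and restriction,
\[
0 \;=\; f_\ast(R\lfloor E)(\sigma_0,\dots,\sigma_k) \;=\; (R\lfloor E)(\sigma_0\circ f, \dots, \sigma_k\circ f) \;=\; R(\chi_E\,\sigma_0\circ f,\ \sigma_1\circ f,\dots,\sigma_k\circ f).
\]
The crux is then to show that, as $E$ ranges over precompact Borel sets and $(\sigma_0,\dots,\sigma_k)$ over Lipschitz tuples on $Y$, the functions $\chi_E\,(\sigma_0\circ f)$ together with the ``directions'' $\sigma_i\circ f$ exhaust enough of $\LIP_c(X)\times\LIP_\infty(X)^k$ to force $R=0$. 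Here is where the BLD/branched-cover structure enters: fixing $E = E_j$ a single sheet, the map $f|_{E_j}$ is essentially a bi-Lipschitz chart onto its image, so $\sigma_i \mapsto \sigma_i\circ f$ on $E_j$ is (locally) surjective onto germs of Lipschitz functions on $E_j$; one then uses the locality and continuity axioms of metric currents (in the sense of Ambrosio--Kirchheim, extended to locally finite mass as in \cite{lan11}) to upgrade from this dense-ish class to all Lipschitz tuples, and the fact that the $E_j$ cover a neighborhood of $\spt\pi_0$ to conclude $R(\pi_0,\dots,\pi_k)=0$ for all tuples supported there. Patching over a locally finite cover of $X$ gives $R=0$, i.e. $S = f^\ast T$.

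The main obstacle I anticipate is the surjectivity/density step: showing that testing against the pulled-back tuples $(\sigma_i\circ f)$ restricted to single sheets is enough to detect a current on $X$. One must be careful that the sheet decomposition $E_1,\dots,E_m$ can be taken Borel with $i_f$ constant on each piece (this uses the normal-neighborhood structure of branched covers between cohomology manifolds, and the index function $i_f$ from Section \ref{sec:polylip}), and that pulling back Lipschitz functions along the bi-Lipschitz sheet maps, then extending by Lipschitz extension off the sheet, produces genuinely global elements of $\LIP_\infty(X)$ whose restrictions to $E_j$ are arbitrary — the extension must not disturb the argument on neighboring sheets, which is handled by shrinking $U$. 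Once the single-sheet reduction and the Lipschitz-extension bookkeeping are in place, the conclusion follows from the locality and weak continuity of metric currents together with the elementary identity $f_\ast((S-f^\ast T)\lfloor E)=0$ derived above.
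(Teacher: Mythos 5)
Your proposal follows the paper's proof essentially verbatim: you first use Theorem \ref{thm: locpull}(1) to reduce to showing that $f_\ast(R\lfloor E)=0$ for all precompact Borel $E$ forces $R=0$ (this is Proposition \ref{preunique} in the paper), and you then establish that implication by decomposing a normal neighborhood into the level sets of the local index $i_f$, on each of which $f$ is bi-Lipschitz (Lemma \ref{restbilip} in the paper). The only cosmetic difference is that you recover $R\lfloor E_j=0$ by arguing that pulled-back Lipschitz tuples are dense enough on a single index level set, whereas the paper writes the same fact as $R\lfloor E_j=(f|_{E_j}^{-1})_\ast f_\ast(R\lfloor E_j)$; both rest on the same injectivity of $f$ on $U\cap i_f^{-1}(m)$, which is the one nontrivial point you assert rather than prove.
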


Properties (2) and (3) in Theorem \ref{thm: locpull} immediately imply the stability of normal currents under the pull-back.

\begin{corollary}
	Let $f:X\to Y$ be a BLD map between locally geodesic oriented cohomology manifolds, and let $k\in \N$. Then the pull-back $f^\ast \colon M_{k,\loc}(Y) \to M_{k,\loc}(X)$ restricts to an operator 
	\[ 
	f^\ast: N_{k,\loc}(Y)\to N_{k,\loc}(X).
	\]
\end{corollary}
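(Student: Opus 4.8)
The plan is to read the corollary off Theorem~\ref{thm: locpull}, essentially formally. Recall that, by definition, a current of locally finite mass lies in $N_{k,\loc}$ precisely when its boundary also has locally finite mass; hence, given $T\in N_{k,\loc}(Y)$, it suffices to check that $f^\ast T\in M_{k,\loc}(X)$ and that $\partial(f^\ast T)$ has locally finite mass in $X$. The first of these is immediate, since Theorem~\ref{thm: locpull} already provides $f^\ast$ as an operator $M_{k,\loc}(Y)\to M_{k,\loc}(X)$ and $T\in N_{k,\loc}(Y)\subset M_{k,\loc}(Y)$.

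For the boundary, first I would note that $T\in N_{k,\loc}(Y)$ means $\partial T\in M_{k-1,\loc}(Y)$, so $f^\ast(\partial T)\in M_{k-1,\loc}(X)$ is defined and has locally finite mass, again by the mapping property in Theorem~\ref{thm: locpull}. Next, applying part~(2) of Theorem~\ref{thm: locpull}, which is valid on $N_{k,\loc}(Y)$ and hence applicable to our $T$, yields the identity of currents $\partial(f^\ast T)=f^\ast(\partial T)$. Combining the two observations, $\partial(f^\ast T)$ has locally finite mass, so $f^\ast T\in N_{k,\loc}(X)$; linearity of the restricted operator is inherited from that of $f^\ast$ on $M_{k,\loc}$. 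If one prefers quantitative control, the same conclusion follows from part~(3): for any precompact Borel set $E\subset X$ one has $\|f^\ast T\|(E)\le L^k\, f^\ast\|T\|(E)$ and $\|\partial(f^\ast T)\|(E)=\|f^\ast(\partial T)\|(E)\le L^{k-1}\, f^\ast\|\partial T\|(E)$, and the pull-backs of the Radon measures $\|T\|$ and $\|\partial T\|$ are again locally finite by the local structure of BLD-maps, so both right-hand sides are finite.

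I do not expect a genuine obstacle here: the entire content is packaged in Theorem~\ref{thm: locpull}, and the corollary is its formal consequence. The only point deserving a line of care is that invoking the intertwining relation $\partial\circ f^\ast=f^\ast\circ\partial$ in part~(2) requires working on $N_{k,\loc}(Y)$ — which is exactly the standing hypothesis — so that no circularity arises between the statement of part~(2) and the conclusion being proved.
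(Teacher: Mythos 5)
Your proposal is correct and follows exactly the route the paper intends: the paper gives no separate proof, merely noting that properties (2) and (3) of Theorem~\ref{thm: locpull} immediately yield the corollary, and your argument is precisely the spelled-out version of that observation (the identity $\partial(f^\ast T)=f^\ast(\partial T)$ from part~(2) combined with the mapping property $f^\ast\colon M_{k-1,\loc}(Y)\to M_{k-1,\loc}(X)$). Nothing further is needed.
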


For proper BLD-maps, we may characterize the pull-back as a right inverse of the push-forward as follows. Recall that a map $f:X\to Y$ is \emph{proper} if $f\inv (K)\subset X$ is compact whenever $K\subset Y$ is compact and that proper maps between oriented cohomology manifolds have a global degree $\deg f\in\Z$.

\begin{corollary}
	\label{thm:pull-back}
	Let $f\colon X\to Y$ be a proper $L$-BLD-map between locally geodesic, oriented cohomology manifold. Then the pull-back $f^* \colon M_k(Y) \to M_k(X)$ has the following properties:
	\begin{enumerate}
		\item the composition $f_* \circ f^* \colon M_{k,\loc}(Y) \to M_{k,\loc}(Y)$ satisfies $f_\ast \circ f^\ast = (\deg f) \id$; 
		\item the pull-back $f^\ast$ commutes with the boundary, i.e. $\partial f^\ast T=f^\ast(\partial T)$ for $T\in M_{k,\loc}(X)$; and
		\item for each $T\in M_k(X)$, 
		\[
		\frac{1}{L^{k}}(\deg f)M(T)\le M(f^\ast T)\le L^k(\deg f)M(T),
		\]
		where $M(T)=\|T\|(X)$ is the mass of a finite mass current $T\in M_k(X)$.
	\end{enumerate}
	Moreover,  the pull-back operator $f^\ast$ restricts to a natural operator \[ f^\ast: N_{k}(Y)\to N_{k}(X). \]
\end{corollary}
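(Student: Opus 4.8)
The plan is to derive Corollary \ref{thm:pull-back} from Theorem \ref{thm: locpull} by specializing to the proper case, where the push-forward function $f_\#\chi_E$ and the index function $i_f$ have a controlled global structure. The main point is that for a proper BLD-map, taking $E=X$ itself is legitimate (since $X$ is now compact once we restrict attention to finite mass currents, or more precisely since $f_\#\chi_X$ is a bounded Borel function on $Y$), and one computes $f_\#\chi_X(y)=\sum_{x\in f^{-1}(y)}i_f(x)=\deg f$ for all $y\in Y$ outside the (measure-zero) branch set, hence $\|T\|$-almost everywhere.

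First I would verify that $f^*$ maps $M_k(Y)$ into $M_k(X)$: by Theorem \ref{thm: locpull}(3) we have $\|f^*T\|\le L^k f^*\|T\|$, and since $f$ is proper and $L$-BLD it is in particular locally Lipschitz and proper, so the measure $f^*\|T\|$ (pulled back appropriately) has total mass comparable to $(\deg f)\|T\|(Y)<\infty$; this simultaneously gives the mass estimate (3) of the corollary once we know $f^*\|T\|(X)=(\deg f)\|T\|(Y)$. For property (1), I would apply Theorem \ref{thm: locpull}\eqref{eq:locpull1} with $E=X$ (using an exhaustion by precompact Borel sets $E_j\uparrow X$ and weak continuity if one prefers to stay within the stated hypotheses), obtaining $f_*(f^*T)=T\lfloor f_\#\chi_X=T\lfloor(\deg f)=(\deg f)T$. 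Property (2) is immediate from Theorem \ref{thm: locpull}(2) restricted to normal currents of finite mass, noting that properness ensures the boundary of a finite mass current again has finite mass in the relevant sense; the final sentence about $f^*$ restricting to $N_k(Y)\to N_k(X)$ then follows from (2) together with the mass bound in (3) exactly as in the Corollary preceding this one.

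The step I expect to require the most care is the identification $f_\#\chi_X\equiv \deg f$ as an $L^\infty$ function, and the justification that $E=X$ (or the limiting procedure $E_j\uparrow X$) is permissible in \eqref{eq:locpull1} when $X$ is noncompact but $f$ is proper. For the first, I would recall that for a proper branched cover between oriented cohomology manifolds the local index satisfies $\sum_{x\in f^{-1}(y)}i_f(x)=\deg f$ for every $y\in Y$ (this is the standard degree formula, valid pointwise, not just a.e., away from the image of the branch set which has measure zero and hence is $\|T\|$-null since $\|T\|\ll\Ha^k$ for $T\in M_k(Y)$ on a cohomology manifold); the boundedness of $i_f$ in the BLD case follows from the multiplicity bound $i_f\le N(L,n)$. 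For the second, properness guarantees $f_\#\chi_{E_j}\to f_\#\chi_X$ pointwise and boundedly, so $T\lfloor f_\#\chi_{E_j}\to T\lfloor f_\#\chi_X$ weakly, and $f^*T\lfloor E_j\to f^*T$ weakly by weak sequential continuity of $f^*$ together with the finiteness of $\|f^*T\|$; pushing forward and using continuity of $f_*$ on finite mass currents closes the argument. Everything else is a direct transcription of Theorem \ref{thm: locpull} with $\deg f$ substituted for the local data.
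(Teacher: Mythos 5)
Your proposal is correct and follows essentially the same route as the paper: exhaust $X$ by (pre)compact sets $E_j$, use $f_\#\chi_{E_j}\to f_\#\chi_X=\deg f$ pointwise together with Theorem \ref{thm: locpull}(1) and (3) and weak convergence to get (1) and (3), and deduce the restriction to $N_k$ from (2) and (3). The only superfluous detour is your appeal to the branch set having measure zero and $\|T\|\ll\Ha^k$ — the index summation formula $\sum_{x\in f^{-1}(y)}i_f(x)=\deg f$ holds for \emph{every} $y$, so no almost-everywhere argument is needed there.
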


\subsection*{Homological boundedness of BLD-elliptic spaces}

As stated in the beginning of the introduction, our motivation to consider the pull-back operator for currents comes from the question of cohomological boundedness of oriented cohomology manifolds admitting a BLD-mapping from the Euclidean space, or \emph{BLD-elliptic spaces}. 
This terminology is an adaptation on the notion of \emph{quasiregular ellipticity} for closed $n$-manifolds admitting a quasiregular map from $\R^n$ introduced by Bonk and Heinonen \cite{bonk01}, which in turn is an adaptation of \emph{ellipticity of a manifold}, introduced by Gromov \cite{gro07}.



Our proof follows the strategy of Bonk and Heinonen in \cite{bonk01}. Instead of considering pull-backs of $p$-harmonic forms as in \cite{bonk01}, we consider pull-backs of metric currents. For this reason, the first step of the argument is to show that the singular homology is isomorphic to the homology of normal metric currents on $X$. It is an interesting question whether it is possible to develop the method of Prywes in this context.

A comment on the assumptions in Theorem \ref{thm: main} is in order. We assume that the target space $X$ is \emph{locally Lipschitz contractible}, that is, we assume that for every point $x\in X$ and a neighborhood $U$ of $x$ there is a neighborhood $V\subset U$ of $x$ and a Lipschitz map $h \colon V\times [0,1]\to U$ for which $h_0$ is the inclusion $V\hookrightarrow U$ and $h_1$ is a constant map. Clearly, Riemannian manifolds in the theorem of Bonk and Heinonen are locally Lipschitz contractible. In the proof of Theorem \ref{thm: main} this assumption yields an a priori finite dimensionality for the current homology $H_*(X)$, which in turn allows us to obtain filling inequality (Proposition \ref{main2}) for normal currents on $X$.

A locally geodesic, locally Lipschitz contractible, and orientable cohomology $n$-manifold admitting a BLD-map from $\R^n$ is a generalized manifold of type A in the terminology of Heinonen and Rickman; see \cite[Definition 5.1]{hei02} for the definition. Indeed, local Lipschitz contractibility implies local linear contractibility, where no Lipschitz requirement is placed on the contracting homotopy. Local Ahlfors-regularity follows from the work of Heinonen--Rickman \cite{hei02} and is discussed in Section \ref{sec:BLDrn}; see Remark \ref{rem:bound}. The last nontrivial condition, the local bilipschitz embeddability, follows from the work of Almgren; see \cite{alm2000} and De Lellis--Spadaro \cite{del11}. We give the details in Appendix \ref{appendix}; see Theorem \ref{lem:almgren}.

\medskip\noindent This article is organized as follows. In Sections \ref{sec:preliminaries} and \ref{sec:BLD}, we discuss preliminaries on metric currents and polylipschitz forms, and BLD-mappings, respectively. Section \ref{sec:pull-back} is devoted to the pull-back of metric currents under BLD-maps and we prove Theorems \ref{thm: locpull} and \ref{unique} in this section. In Section \ref{sec:equidistribution} we prove equidistribution of the pull-back currents under BLD-maps and in Section \ref{sec:current_homology} we discuss current homology and prove Theorem \ref{thm: main}. The article is concluded with an appendix on bilipschitz embeddability of BLD-elliptic spaces into Euclidean spaces.

\medskip\noindent {\bf Acknowledgements} We thank Rami Luisto and Stefan Wenger for discussions on the topics of the manuscript.

\section{Metric currents and polylipschitz forms}
\label{sec:preliminaries}

In this section, we recall first basic notions from the Ambrosio--Kirchheim theory of metric currents \cite{amb00} and then briefly discuss the construction of polylipschitz forms introduced in \cite{teripekka1}.
\subsection{Metric currents}

Let $X$ be a locally compact metric space. A function $f:X\to \R$ is \emph{Lipschitz}, if
\[
\Lip(f):=\sup_{x\ne y}\frac{|f(x)-f(y)|}{d(x,y)}<\infty.
\]
We denote by $\LIP_\infty(X)$ and $\LIP_c(X)$ the vector spaces of bounded Lipschitz functions and Lipschitz functions with compact support, respectively. We equip $\LIP_\infty(X)$ and $\LIP_c(X)$ with locally convex vector topologies such that
\begin{itemize}
	\item[(1)] $f_n\to f$ in $\LIP_\infty(X)$ if $f_n\to f$ pointwise and $\sup_n\LIP(f_n)<\infty$; and
	\item[(2)] $f_n\to f$ in $\LIP_c(X)$ if there is a compact set $K\subset X$ for which $\spt(f_n)\subset K$ for all $n\in\N$, and $f_n\to f$ in $\LIP_\infty(X)$.
\end{itemize}
See \cite{lan11} for more details. Given $k\ge 0$, let
\[
\sD^k(X):=\LIP_c(X)\times \LIP_\infty(X)^k
\]
be equipped with the product topology. A $(k+1)$-linear map $\displaystyle T:\sD^k(X)\to \R$ is a \emph{metric $k$-current on $X$} if
\begin{itemize}
	\item[(1)] $\displaystyle \lim_{n\to\infty}T(\pi_n)=T(\pi)$ whenever $\pi_n\to \pi$ in $\sD^k(X)$, and 
	\item[(2)] $T(\pi_0,\ldots,\pi_k)=0$ whenever, for some $j=1,\ldots,k$, $\pi_j$ is constant in a neighbourhood of $\spt \pi_0$.
\end{itemize}
The vector space of metric $k$-currents is denoted $\sD_k(X)$.

\subsubsection*{Boundary and restriction} For $k\ge 1$, the \emph{boundary operator} $\partial:\sD_k(X)\to sD_{k-1}(X)$ is defined by
\[
\partial T(\pi_0,\ldots,\pi_{k-1}):=T(\sigma,\pi_0,\ldots,\pi_{k-1})
\]
for any $\sigma\in \LIP_c(X)$ with $\sigma|_{\spt \pi_0}\equiv 1$. It follows from the locality condition (2) that the boundary is well defined.

Given $\alpha=(\alpha_0,\ldots,\alpha_m)\in \sD^m(X)$ and $T\in \sD_k(X)$ with $k\ge m$, we may define the \emph{restriction of $T$ by $\alpha$} as the current
\[
T\lfloor\alpha\in \sD_{k-m}(X),\quad (\pi_0,\ldots,\pi_{k-m})\mapsto T(\alpha_0\pi_0,\alpha_1,\ldots,\alpha_m,\pi_1,\ldots,\pi_{k-m}).
\]

\subsubsection*{Mass} A $k$-current $T\in \sD_k(X)$ is said to have \emph{locally finite mass}, if there is a Radon measure $\mu$ on $X$ satisfying
\begin{equation}\label{mass}
|T(\pi_0,\ldots,\pi_k)|\le \Lip(\pi_1)\cdots\Lip(\pi_k)\int_X|\pi_0|\ud\mu,\quad (\pi_0,\ldots,\pi_k)\in \sD^k(X)
\end{equation}
for every $(\pi_0,\ldots,\pi_k)\in \sD^k(X)$. If $T\in \sD_k(X)$ has locally finite mass, it admits a \emph{mass measure}, denoted $\|T\|$, a Radon measure on $X$ that is minimal with respect to satisfying (\ref{mass}). If $\|T\|(X)<\infty$, we say that $T$ has \emph{finite mass}. The space of $k$-currents of locally finite mass is denoted by $M_{k,\loc}(X)$, and the space of $k$-currents of finite mass $M_k(X)$.

\subsubsection*{Normal currents} A $k$-current $T\in \sD_k(X)$ is called \emph{locally normal}, if $T\in M_{k,\loc}(X)$ and $\partial T\in M_{k-1,\loc}(X)$, and \emph{normal} if $T\in M_k(X)$ and $\partial T\in M_{k-1}(X)$. The \emph{normal mass} $N \colon N_k(X)\to [0,\infty)$, 
\[
T \mapsto \|T\|(X) + \|\partial T\|(X),
\]
is a norm on $N_k(X)$ and the normed space $(N_k(X), N)$ is a Banach space; see \cite[Proposition 4.2]{lan11}. For $k=0$, the norm $N$ is the total variation norm.


\subsubsection*{Flat norm} Let $E\subset X$ be a Borel set and let $\F_E \colon N_{k,\loc}(X) \to [0,\infty]$ be the function
\[
T\mapsto \inf \{ \|T-\partial A\|(E)+\|A\|(E): A\in N_{k+1,\loc}(X) \}.
\]
For each non-empty Borel set $E$, $\F_E$ is a seminorm and  $\F := \F_X$ is a norm on $N_k(X)$, called the \emph{flat norm of $N_{k}(X)$}.

We recall that, for each $T\in N_{k,\loc}(X)$ and a Borel set $E\subset X$,  
\[
\F_E(\partial T) \le \F_E(T) \le \|T\|(E).
\]

We record standard properties of the flat norm of a restriction of a current as a lemma. 
\begin{lemma}\label{flatprod}
	Let $T\in N_{k,\loc}(X)$ and $E\subset X$ a Borel set. Then 
	\[
	\F_E(T\lfloor\eta)\le (\|\eta\|_\infty+\Lip\eta)\F_E(T)
	\] 
	for all Lipschitz functions $\eta\in\LIP_\infty(X)$. Moreover, if $\eta\in\LIP_\infty(X)$ satisfies $\eta|_E\equiv 1$, then  
	\[ 
	\F_E(T)=\F_E(T\lfloor\eta). 
	\]
\end{lemma}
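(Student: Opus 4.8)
The statement to prove is Lemma~\ref{flatprod}, on how the flat seminorm $\F_E$ interacts with restriction by a bounded Lipschitz function.

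My plan is as follows. First I would prove the inequality $\F_E(T\lfloor\eta)\le (\|\eta\|_\infty+\Lip\eta)\F_E(T)$. The key observation is that restriction by a function is a linear operation that commutes appropriately with the boundary: for $A\in N_{k+1,\loc}(X)$ one has the product-rule-type identity
\[
\partial(A\lfloor\eta)=(\partial A)\lfloor\eta - A\lfloor\ud\eta,
\]
but a cleaner route avoids differentiating $\eta$: given a competitor $A$ for $\F_E(T)$, i.e.\ $T-\partial A$ has small mass on $E$ and $\|A\|(E)$ is small, I would use $A\lfloor\eta$ as a competitor for $\F_E(T\lfloor\eta)$. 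Then $T\lfloor\eta-\partial(A\lfloor\eta)$ must be related to $(T-\partial A)\lfloor\eta$ plus an error term coming from the Leibniz rule. To control masses, I would invoke the standard mass estimates for restrictions: $\|S\lfloor\eta\|\le \|\eta\|_\infty\|S\|$ for $0$-forms and $\|S\lfloor\ud\eta\|\le \Lip(\eta)\|S\|$ (these follow directly from the mass inequality~\eqref{mass} and the definition of $\|\cdot\|$). Combining, $\|T\lfloor\eta-\partial(A\lfloor\eta)\|(E)\le \|\eta\|_\infty\|T-\partial A\|(E)+\Lip(\eta)\|A\|(E)$ and $\|A\lfloor\eta\|(E)\le \|\eta\|_\infty\|A\|(E)$, so summing gives the bound with constant $\|\eta\|_\infty+\Lip(\eta)$ after taking the infimum over $A$. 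One must check that $A\lfloor\eta\in N_{k+1,\loc}(X)$, which is immediate since both $\|A\lfloor\eta\|$ and $\|\partial(A\lfloor\eta)\|$ are locally finite by the same estimates.

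For the second assertion, assume $\eta|_E\equiv 1$. One inequality is the special case of the first: since $\|\eta\|_\infty$ and $\Lip(\eta)$ need not be small, this does not immediately give $\F_E(T\lfloor\eta)\le\F_E(T)$, so instead I would argue directly. The point is that $\F_E$ only sees the behavior of currents on $E$, and on $E$ we have $\eta\equiv 1$, so $T\lfloor\eta$ and $T$ "agree on $E$" in the relevant sense. Concretely, given a competitor $A$ for $\F_E(T)$, the current $A\lfloor\eta$ is a competitor for $\F_E(T\lfloor\eta)$ and, because $\eta\equiv 1$ on $E$, the error terms from the Leibniz rule are supported where... — here I need to be careful, since $\ud\eta$ need not vanish on $E$ merely because $\eta$ is constant on $E$ when $E$ is not open. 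The clean fix is to replace $\eta$ by a function that is $\equiv 1$ on a neighborhood, or better: observe that $\F_E(S)$ depends only on the restrictions $\|\cdot\|\lfloor E$ and one can show $\F_E(S)=\F_E(S\lfloor\chi_E\text{-type modification})$. The safest approach is: by symmetry it suffices to prove $\F_E(T)\le\F_E(T\lfloor\eta)$ and $\F_E(T\lfloor\eta)\le\F_E(T)$; for the former apply the already-proven inequality with $T\lfloor\eta$ in place of $T$ and a function $\tilde\eta$ with $\tilde\eta\eta=1$ on a neighborhood of... Rather than chase neighborhoods, I would instead prove both directions by using the restriction $\lfloor E$ at the level of measures: for any $S$, $\F_E(S)=\inf\{\|S-\partial A\|(E)+\|A\|(E)\}$, and I claim a competitor $A$ for one gives a competitor $A\lfloor\eta'$ for the other where $\eta'$ is chosen with $\eta\eta'\equiv 1$ near $\spt$ of the relevant data; the Leibniz error terms $A\lfloor\ud(\eta\eta')$ then vanish.

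The main obstacle, as the previous paragraph suggests, is handling the Leibniz rule error term $A\lfloor\ud\eta$ cleanly in the equality statement when $E$ is an arbitrary Borel set rather than open, since "$\eta\equiv 1$ on $E$" does not force "$\ud\eta\equiv 0$ on $E$." I expect the resolution is that the equality $\F_E(T)=\F_E(T\lfloor\eta)$ should be proven by a two-sided application of the inequality: writing $T\lfloor\eta$ and noting $(T\lfloor\eta)\lfloor\eta'=T\lfloor(\eta\eta')$ for suitable $\eta'$, and exploiting that we may build $\eta'\in\LIP_\infty(X)$ with $0\le\eta'\le 1$, $\eta\eta'\le 1$ everywhere, $\eta\eta'\equiv 1$ on $E$, together with the fact that in the flat-norm infimum one may without loss restrict to competitors $A$ with $\|A\|$ concentrated near $E$ (since only $\|\cdot\|(E)$ enters). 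A direct and robust alternative: prove $\F_E$ is monotone and use that $\|(T-T\lfloor\eta)\|(E)=\|T\lfloor(1-\eta)\|(E)=0$ because $(1-\eta)|_E\equiv 0$ and the mass of $T\lfloor(1-\eta)$ is absolutely continuous with respect to $\|T\|$ with density bounded by $|1-\eta|$, which vanishes $\|T\|$-a.e.\ on $E$; hence $T$ and $T\lfloor\eta$ have the same $\F_E$ because $\F_E(S)\le\F_E(S')+\|S-S'\|(E)$ and $\|S-S'\|(E)=0$ in both directions. This last argument sidesteps the Leibniz issue entirely and is the route I would ultimately take.
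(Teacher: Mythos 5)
Your proposal is correct and essentially reproduces the paper's proof: the first inequality is obtained exactly as you describe, via the Leibniz rule $(\partial B)\lfloor\eta = B\lfloor(1,\eta)+\partial(B\lfloor\eta)$ together with the mass bounds $\|S\lfloor\eta\|\le\|\eta\|_\infty\|S\|$ and $\|S\lfloor(1,\eta)\|\le\Lip(\eta)\|S\|$, and the equality via $\|T\lfloor(1-\eta)\|(E)=0$ combined with the triangle inequality for mass (the paper phrases this as $\|T-\partial A\|(E)=\|T\lfloor\eta-\partial A\|(E)$ for every competitor $A$). The detours in your discussion of the second claim (the auxiliary $\eta'$, the worry about $\ud\eta$ not vanishing on $E$) are unnecessary; the ``direct and robust alternative'' you settle on at the end is precisely the paper's argument.
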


\begin{proof}
	Let $A\in N_{k,\loc}(X)$ and $B\in N_{k+1,\loc}(X)$ satisfy $T=A+\partial B$. Then, for any $\eta\in \LIP_c(X)$ we have
	\[\begin{split}
	T\lfloor \eta=A\lfloor \eta+(\partial B)\lfloor\eta=A\lfloor\eta+B\lfloor (1,\eta)+\partial(B\lfloor\eta).
	\end{split}\]
	Thus, by (\ref{mass}), we have
	\[
	\begin{split}
	\F_E(T\lfloor\eta)\le &\|A\lfloor\eta+B\lfloor (1,\eta)\|(E)+\|B\lfloor\eta\|(E)\\
	\le &\|A\lfloor \eta\|(E)+\|B\lfloor(1,\eta)\|(E)+\|B\lfloor\eta\|(E)\\
	\le &\|\eta\|_\infty\|A\|(E)+(\Lip\eta)\|B\|(E)+\|\eta\|_\infty\|B\|(E)\\
	\le &(\|\eta\|_\infty+\Lip\eta)\big(\|A\|(E)+\|B\|(E)\big).
	\end{split}
	\]
	The first claim follows.
	
	For the second claim, note that
	\[
	\F_E(T)=\inf\{ \|T-\partial\|(E)+\|A\|(E): A\in N_{k+1}(X) \},
	\]
	and that 
	\[ 
	\|T\lfloor(1-\eta)\|(E)=0 
	\] 
	if $\eta|_E\equiv 1$. For any $A\in N_{k+1}(X)$ we have
	\[
	\|T\lfloor\eta-\partial A\|(E)-\|T-T\lfloor\eta\|(E)\le \|T-\partial A\|(E)\le \|T-T\lfloor\eta\|(E)+\|T\lfloor\eta-\partial A\|(E).
	\]
	Thus 
	\[ 
	\|T-\partial A\|(E)=\|T\lfloor\eta-\partial A\|(E), 
	\] 
	and the second claim follows.
\end{proof}

The following compactness result for the flat norm provides a crucial tool in the proof of homological boundedness. This result is used for currents in $\R^n$ and it is an immediate consequence of \cite[Corollary 7.3]{fed60} and the weak compactness of normal currents \cite[Theorem 5.4]{lan11}. For an analogous compactness result in compact metric spaces, see \cite{dep17}.

\begin{theorem}
	\label{compactness}
	Let $A\subset \R^n$ be a compact subset and $\lambda\ge 0$. Then the set 
	\[ 
	N_k(A,\lambda) = \{T\in N_k(\R^n): \spt T\subset A \text{ and } N(T)\le\lambda\}
	\] 
	is compact in the flat norm $\F_A$, in the sense that every sequence $(T_i)$ in $N_k(A,\lambda)$ has a subsequence $(T_{i_k})$ and $T\in N_k(A,\lambda)$ such that $$\lim_{k\to\infty}\F_A(T_{i_k}-T)= 0.$$
\end{theorem}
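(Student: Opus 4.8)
The plan is to reduce the statement to the classical compactness theorem for Federer--Fleming currents in $\R^n$. First I would invoke the standard identification of normal metric $k$-currents $T\in N_k(\R^n)$ in the sense of Ambrosio--Kirchheim with classical normal currents, an identification that preserves mass, boundary mass, support, and restriction by Borel sets; under it the flat norm $\F_A$ of the excerpt agrees with the classical flat norm, so one may argue entirely in the classical setting. Given a sequence $(T_i)$ in $N_k(A,\lambda)$, the uniform bound $N(T_i)\le\lambda$ together with the common compact support in $A$ places us in the hypotheses of the weak compactness theorem for normal currents \cite[Theorem 5.4]{lan11}: after passing to a subsequence, which we do not relabel, there is $T\in\sD_k(\R^n)$ with $T_i\to T$ weakly, and hence also $\partial T_i\to\partial T$ weakly.

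Next I would check that the weak limit lies in $N_k(A,\lambda)$. Lower semicontinuity of mass under weak convergence gives $\|T\|(\R^n)\le\liminf_i\|T_i\|(\R^n)$ and $\|\partial T\|(\R^n)\le\liminf_i\|\partial T_i\|(\R^n)$, whence $N(T)=\|T\|(\R^n)+\|\partial T\|(\R^n)\le\lambda$; and since $A$ is closed and $\spt T_i\subset A$ for every $i$, the weak limit satisfies $\spt T\subset A$. Thus $T\in N_k(A,\lambda)$, so the limit set is the correct one.

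It remains to upgrade the weak convergence $T_i\to T$ to flat convergence $\F_A(T_i-T)\to 0$. This is precisely the classical fact \cite[Corollary 7.3]{fed60}: on the set of normal currents supported in a fixed compact set with normal mass bounded by $\lambda$, the flat metric induces the weak topology, so weak convergence there implies flat convergence. Applying this to $(T_i)$ and $T$ completes the proof. The only non-formal input is this last step, which rests on the Federer--Fleming deformation theorem in $\R^n$ and is used as a black box; the extraction of the weak limit and the verification that it has the prescribed support and normal mass are soft and rely only on weak compactness and lower semicontinuity of mass. A minor point worth recording is the compatibility of the metric-current/classical-current identification with restriction by Borel sets, so that the two flat norms genuinely coincide, but this is routine.
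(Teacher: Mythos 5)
Your proposal is correct and follows exactly the route the paper intends: the paper gives no detailed proof, stating only that the theorem is an immediate consequence of \cite[Corollary 7.3]{fed60} together with the weak compactness of normal currents \cite[Theorem 5.4]{lan11}, which are precisely the two ingredients you combine (plus the routine verification, via lower semicontinuity of mass and the support condition, that the weak limit stays in $N_k(A,\lambda)$). Nothing further is needed.
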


\subsection{Polylipschitz forms}\label{sec:polylip}
\subsubsection*{Polylipschitz functions} Let $k\in\N$ and $X$ be a metric space. Given functions $f_0,\ldots,f_k:X\to\R$, denote by $f_0\otimes\cdots\otimes f_k:X^{k+1}\to\R$ the function
\[
(x_0,\ldots,x_k)\mapsto f_0(x_0)\cdots f_k(x_k).
\]
Note that if each $f_j$ is Lipschitz and bounded, then $f_1\otimes\cdots\otimes f_k$ is Lipschitz and bounded on $X^{k+1}$ (here we endow $X^{k+1}$ with the Euclidean product metric). The norm $L(f)$ on $\LIP_\infty(X)$, given by
\[
L(f):=\max\{\|f\|_\infty,\Lip(f) \}\quad \text{for each }f\in \LIP_\infty(X),
\]
makes $\LIP_\infty(X)$ into a Banach space.

Consider the algebraic tensor product $\LIP_\infty(X)^{\otimes(k+1)}$. The \emph{projective tensor norm} on $\LIP_\infty(X)^{\otimes(k+1)}$ is given by 
\[
L_k(\pi)=\inf\left\{\sum_j^mL(\pi_0^j)\cdots L(\pi_k^j): \pi=\sum_j^m\pi_0^j\otimes\cdots\otimes\pi_k^j \right\},\quad \pi\in\LIP_\infty(X)^{\otimes(k+1)}.
\]
The completion of $\LIP_\infty(X)^{\otimes(k+1)}$ with respect to the projective tensor norm is called the \emph{(completed) projective tensor product} and denoted $\LIP_\infty(X)^{\hat\otimes_\pi(k+1)}$.

The projective tensor product has the following \emph{universal property} which characterizes it up to isometric isomorphism in the category of Banach spaces: Let $B$ be a Banach space and $A:\LIP_\infty(X)^{k+1}\to B$ a continuous $(k+1)$-linear map. Then there exists a unique \emph{continuous linear map} $ \overline A:\LIP_\infty(X)^{\hat\otimes_\pi(k+1)}\to B$ satisfying
\begin{equation}\label{diagram:proj}
A = \overline{A} \circ \jmath,
\end{equation}
where $\jmath:\LIP_\infty(X)^{k+1}\to \LIP_\infty(X)^{\hat\otimes_\pi(k+1)}$ is the continuous $(k+1)$-linear map $(\pi_0,\ldots,\pi_k)\mapsto \pi_0\otimes\cdots\otimes\pi_k$.
In particular, the  map
\[
A:\LIP_\infty(X)^{k+1}\to \LIP_\infty(X^{k+1}),\quad (\pi_0,\ldots,\pi_k)\mapsto \pi_0\otimes\cdots\otimes\pi_k,
\]
extends to a continuous linear map
\[
\bar A:\LIP_\infty(X)^{\hat\otimes_\pi(k+1)}\to \LIP_\infty(X^{k+1}).
\]
we identify the projective tensor product with the image of this map in $\LIP_\infty(X^{k+1})$. 
\begin{definition}
	Let $X$ be a metric space and $k\in\N$. A function $\pi:X^{k+1}\to\R$ is a \emph{$k$-polylipschitz function on $X$} if there are bounded Lipschitz functions $\pi_0^j,\ldots\pi_k^j\in\LIP_\infty(X)$, $j=0,1,\ldots$, satisfying
	\begin{equation}\label{polylip1}
	\sum_j^\infty L(\pi_0^j)\cdots L(\pi_k^j)<\infty,
	\end{equation}
	and
	\begin{equation}\label{polylip2}
	\pi=\sum_j^\infty\pi_0^j\otimes\cdots\otimes\pi_k^j.
	\end{equation}
\end{definition}
In other words a polylipschitz function is an element of the completed  projective tensor product under the identification explained above.

\bigskip\subsubsection*{Polylipschitz forms}
Fix a metric space $X$. Given open sets $U\subset V\subset X$ we denote by
\[
\rho_{U,V}:\Poly^k(V)\mapsto \Poly^k(U),\quad \pi\mapsto \pi|_{U^{k+1}},
\]
the restriction map. The collection
\[
\{\Poly^k(U), \rho_{U,V} \}
\]
ranging over all open sets $U\subset V\subset X$ is known as the \emph{(polylipschitz) presheave over $X$}. Given $x\in X$ and two polylipschitz functions $\pi\in \Poly^k(U),\pi'\in\Poly^k(U')$ defined on open neighbourhoods $U$ and $U'$ of $x$, respectively, we say that $\pi$ and $\pi'$ are equivalent, denoted $\pi\sim\pi'$, if there is a neighbourhood $W\subset U\cap U'$ such that
\[
\rho_{W,U}(\pi)=\rho_{W,U'}(\pi').
\]
The equivalence class $[\pi]_x$ of a polylipschitz $\pi\in\Poly^k(U)$ defined on a neighbourhood $U$ of $x$ is called the \emph{germ of $\pi$ on $x$}. 

The \emph{\'etal\'e space} $\Polys^k(X)$ consists over all such equivalence classes. There is a natural projection map
\[
q:\Polys^k(X)\to X,\quad [\pi]_x\mapsto x.
\]
For each $x\in X$, the set
\[
q\inv(x)=:\Polys_x^k(X)
\]
is called the \emph{stalk of $\Polys^k(X)$ at $x$}, and it is a real vector space.

A \emph{$k$-polylipschitz section on $X$} is a section of $\Polys^k(X)$, i.e. a map $\omega:X\to \Polys^k(X)$ satisfying $q\circ\omega=\id_X$. We denote the space of $k$-polylipschitz sections on $X$ by $\G^k(X)$. The support of a $k$-polylipschitz section $\omega\in \G^k(X)$ is the set
\[
\spt\omega=\cl \{ x\in X: \omega(x)\ne 0 \}.
\]

The space $\Polys^k(X)$ can be equipped with the \emph{\'etal\'e topology} which makes $q$ into a local homeomorphism. See \cite[Section 5.6]{war83} for the details. Note that $\Polys^k(X)$ is usually a rather pathological space; for example it is rarely Hausdorff. Instead of describing the topology, we describe what continuity of sections means: a section $\omega$ is continuous if there is there is a locally finite open cover $\mathscr U$ of $X$ and a collection $\{ \pi_U \}_{U\in\mathscr U}$, where $\pi_U\in \Poly^k(U)$, such that $[\pi_U]_x=\omega(x)$ for all $U\in\mathscr U$ and $x\in U$, and the collection $\{\pi_U \}_{U\in\mathscr U}$ satisfies the \emph{overlap condition}
\begin{equation}\label{overlap}
\rho_{U\cap V,U}(\pi_U)=\rho_{U\cap V,V}(\pi_V)
\end{equation}
whenever $U,V\in \mathscr U$ and $U\cap V\ne \varnothing$.

Conversely, any collection $\{ \pi_U \}_{U\in\mathscr U}$ satisfying (\ref{overlap}) defines a continuous section $\omega$ of $\Polys^k(X)$ by setting
\[
\omega(x)=[\pi_U]_x,\quad \textrm{whenever $U\in\mathscr U$ and } x\in U.
\]
\begin{definition}\label{polylipform}
	Let $X$ be a metric space, and $k\in\N$. A \emph{$k$-polylipschitz form} on $X$ is a continuous section of $\Polys^k(X)$.
\end{definition}
The space of $k$-polylipschitz forms on $X$ is denoted by $\Gamma^k(X)$, and $\Gamma_c^k(X)$ denotes the set of polylipschitz forms whose support is compact.

\subsubsection*{Piecewise continuous polylipschitz forms} 

Given any set $B\subset X$, the restriction operators $\rho_{U\cap B,V}:\Poly^k(V)\to\Poly^k(U\cap B)$, for $U\subset V\subset X$, form a presheaf homomorphism, giving rise to a restriction homomorphism $\rho_B:\G^k(X)\to\G^k(B)$, where $B$ is considered as a metric space with the restricted metric from $X$. We denote $\rho_B(\omega)=:\omega|_B$ for $\omega\in \G^k(X)$.
\begin{definition}
	A $k$-polylipschitz section $\omega\in \G^k(X)$ is called \emph{$\mathcal E$-continuous}, where $\mathcal E$ is a countable Borel partition of $X$, if $\omega|_B\in \Gamma^k(B)$ for every $B\in \mathcal E$.
	
	A polylipschitz section is \emph{partition-continuous} if it is $\mathcal E$-continuous for some countable Borel partition $\mathcal E$ of $X$.
\end{definition}
We denote by $\Gamma^k_{\mathrm{pc}}(X)$ the space of partition-continuous polylipschitz sections, and by $\Parcon(X)$ those elements of $\Gamma^k_{\mathrm{pc}}(X)$ which have compact support. Clearly $\Gamma_c^k(X)\subset \Parcon(X)$.

\subsubsection*{Exterior derivative and cup-product} We refer to \cite[Section 4.5]{teripekka1} for further details. Following the construction of Alexander--Spanier cohomology we introduce the linear map $\displaystyle d=d_X^k:\Poly^k(X)\to \Poly^{k+1}(X)$ by 
\[
d\pi(x_0,\ldots,x_{k+1})=\sum_{j=0}^{k+1}(-1)^j\pi(x_0,\ldots,\hat{x_j},\ldots,x_{k+1})
\]
for $\pi\in\Poly^k(X)$ and $x_0,\ldots,x_{k+1}\in X$. This map satisfies $d\circ d=0$. The presheaf homomorphism $\{ d^k_U \}_{U}$ induces homomorphism $d:\G^k(X)\to \G^{k+1}(X)$ that restricts to
\begin{align*}
&d:\Gamma_c^k(X)\to \Gamma_c^{k+1}(X)\textrm{ and}\\
&d:\Parcon(X)\to \Gamma_{\operatorname{pc},c}^{k+1}(X).
\end{align*}

The \emph{cup-product} is a bilinear map $\smile:\Parcon(X)\times\Gamma^m_{\operatorname{pc},c}(X)\to \Gamma^{k+m}_{\operatorname{pc},c}(X)$, defined in the same manner starting from the bilinear map
\[
\smile:\Poly^k(X)\times\Poly^m(X)\to \Poly^{k+m}(X)
\]
given by
\[
\alpha\smile\beta(x_0,\ldots,x_{k+m})=\alpha(x_0,\ldots,x_k)\beta(x_0,x_{k+1},\ldots,x_{k+m})
\]
for $\alpha\in \Poly^k(X)$, $\beta\in \Poly^m(X)$ and $x_0,\ldots,x_{k+m}\in X$. 
Note that the cup product restricts to a bilinear map $\smile:\Gamma_c^k(X)\times\Gamma_c^m(X)\to \Gamma^{k+m}_c(X)$.

\subsection{Duality of metric currents and polylipschitz forms}

We refer to \cite[Sections 4.2, 5.1 and 6]{teripekka1} for the notions of convergence of sequences of polylipschitz functions, polylipschitz forms, and partition-continuous polylipschitz forms, respectively. Note that the exterior derivative $d:\Parcon(X)\to \Gamma^{k+1}_{\operatorname{pc},c}(X)$ is sequentially continuous, cf. \cite[Proposition 6.8]{teripekka1}.

Recall the natural embedding
\[
\imath:\sD^k(X)\hookrightarrow \Gamma_c^k(X),\quad \imath(\pi_0,\ldots,\pi_k)(x)=[\pi_0\otimes\cdots\otimes\pi_k]_x
\]
for $(\pi_0,\ldots,\pi_k)\in\sD^k(X)$ and $x\in X$. We slightly abuse notation by using the symbol $\imath$ also for the embedding $\sD^k(X)\hookrightarrow \Parcon(X)$.

\begin{theorem}\cite[Theorems 1.1 and 1.3]{teripekka1}\label{ext}
Let $X$ be a locally compact space, $k\ge 0$, and $T\in M_{k,\loc}(X)$. Then there is a unique sequentially continuous linear functional $\widehat T:\Parcon(X)\to\R$ such that
\[
T=\widehat T\circ\imath.
\]
Moreover, if $T\in N_{k+1,\loc}(X)$, then we have
\[
\widehat{\partial T}(\omega)=\widehat T(d\omega)
\]
for every $\omega\in\Parcon(X)$.
\end{theorem}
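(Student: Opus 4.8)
The plan is to construct $\widehat T$ in three stages — first extend $T$ to polylipschitz \emph{functions} using the universal property of the projective tensor product, then patch over an open cover to obtain a functional on $\Gamma^k_c(X)$, and finally extend over countable Borel partitions to $\Parcon(X)$ — and then to verify the boundary identity by reducing to the generating tuples $\imath(\pi_0,\dots,\pi_k)$. \textbf{Step 1 (polylipschitz functions).} Fix a compact $K\subset X$. Since $T$ has locally finite mass, the bound \eqref{mass} gives, for $(\pi_0,\dots,\pi_k)\in\sD^k(X)$ with $\spt\pi_0\subset K$,
\[
|T(\pi_0,\dots,\pi_k)|\le \Lip(\pi_1)\cdots\Lip(\pi_k)\int_X|\pi_0|\,\ud\|T\|\le \|T\|(K)\,L(\pi_0)\cdots L(\pi_k),
\]
because $\int_X|\pi_0|\,\ud\|T\|\le\|\pi_0\|_\infty\|T\|(K)$ and $\Lip(\pi_j)\le L(\pi_j)$. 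Thus $T$ is a continuous $(k+1)$-linear map on $\LIP_{\infty,K}(X)\times\LIP_\infty(X)^k$ in the $L$-norms, where $\LIP_{\infty,K}$ denotes the bounded Lipschitz functions supported in $K$, and the universal property \eqref{diagram:proj} of the projective tensor product yields a continuous linear extension to the corresponding completed projective tensor product. Exhausting $X$ by compact sets and checking compatibility on overlaps defines a value $T(\phi\,\pi)$ for every polylipschitz function $\pi\in\Poly^k(U)$ and every $\phi\in\LIP_c(U)$, where $\phi\,\pi$ denotes multiplication of $\phi$ into the first tensor slot (a continuous operation on the projective tensor product).

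\textbf{Step 2 (patching to $\Gamma^k_c(X)$).} Let $\omega\in\Gamma^k_c(X)$ be represented by a collection $\{\pi_U\}_{U\in\mathscr U}$ over a locally finite open cover $\mathscr U$ satisfying the overlap condition \eqref{overlap}. Choose a Lipschitz partition of unity $\{\phi_U\}$ subordinate to $\mathscr U$ and set
\[
\widehat T(\omega):=\sum_{U\in\mathscr U}T(\phi_U\,\pi_U),
\]
a finite sum since $\spt\omega$ is compact and $\mathscr U$ is locally finite. Independence of the cover, the local representatives, and the partition of unity is proved by upgrading the locality condition on $T$ (the second current axiom) to a ``diagonal locality'' statement for the extended functional $T(\phi\,\cdot)$: $T(\phi\,\pi)=0$ whenever $\pi$ vanishes on a neighbourhood of the diagonal $\{(x,\dots,x):x\in\spt\phi\}$; combined with \eqref{overlap} this makes the partition-of-unity decomposition canonical. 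Taking $\pi_U=\pi_0\otimes\cdots\otimes\pi_k$ and using $\sum_U\phi_U\equiv1$ near $\spt\pi_0$ shows $\widehat T\circ\imath=T$. Sequential continuity on $\Gamma^k_c(X)$ follows from the Step 1 estimate, since convergence of polylipschitz forms amounts (locally) to convergence in the projective tensor norm with uniformly bounded $L$-norms and supports in a fixed compact set.

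\textbf{Step 3 (Borel partitions and uniqueness).} For a Borel set $B\subset X$ the current $T$ admits a restriction $T\lfloor\chi_B\in M_{k,\loc}(X)$ with $\|T\lfloor\chi_B\|=\|T\|\lfloor B$. Given a countable Borel partition $\mathcal E=\{B_i\}$ and an $\mathcal E$-continuous $\omega\in\Parcon(X)$, each $\omega|_{B_i}$ is a continuous polylipschitz form, and we set $\widehat T(\omega):=\sum_i\widehat{T\lfloor\chi_{B_i}}(\omega)$, each summand being interpreted via Step 2 — here $\widehat{T\lfloor\chi_{B_i}}$ depends only on the germ of $\omega$ along $B_i$ because $\|T\lfloor\chi_{B_i}\|$ is concentrated on $B_i$, so $\mathcal E$-continuity suffices. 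The series converges absolutely since $\sum_i\|T\|(B_i)\le\|T\|(\spt\omega)<\infty$ together with the Step 1 bound; independence of the partition follows by passing to a common refinement and using additivity of $B\mapsto T\lfloor\chi_B$; and sequential continuity follows from the mass bound and dominated convergence over the partition. Uniqueness is then immediate: the linear span of $\imath(\sD^k(X))$ is sequentially dense in $\Parcon(X)$, and a sequentially continuous functional that vanishes there vanishes everywhere.

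\textbf{Step 4 (boundary identity) and the main obstacle.} Assume $T\in N_{k+1,\loc}(X)$. A direct computation with the Alexander--Spanier differential $d$ gives, for $(\pi_0,\dots,\pi_k)\in\sD^k(X)$,
\[
d\,\imath(\pi_0,\dots,\pi_k)=\sum_{j=0}^{k+1}(-1)^j\,\imath(\pi_0,\dots,\pi_{j-1},1,\pi_j,\dots,\pi_k),
\]
with the constant $1$ inserted in slot $j$. Applying $\widehat T$ and invoking locality, every term with $j\ge1$ vanishes (the inserted $1$ occupies a slot $\ge1$), while the $j=0$ term equals $T(1,\pi_0,\dots,\pi_k)=\partial T(\pi_0,\dots,\pi_k)=\widehat{\partial T}(\imath(\pi_0,\dots,\pi_k))$ by the definition of $\partial$ (replacing the global $1$ by a cutoff $\sigma\equiv1$ near $\spt\pi_0$ is harmless by locality, and such a $\sigma$ exists because $\omega$ has compact support). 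Since $\widehat T\circ d$ and $\widehat{\partial T}$ are both sequentially continuous and agree on the sequentially dense span of $\imath(\sD^k(X))$, they agree on all of $\Parcon(X)$. I expect the main obstacle to be Step 2: converting the purely functional-analytic extension of Step 1 into a coordinate-free functional on polylipschitz \emph{forms}, i.e.\ establishing the diagonal-locality of $T(\phi\,\cdot)$ and its compatibility with the overlap condition \eqref{overlap} so that $\sum_U T(\phi_U\pi_U)$ is genuinely well defined. Step 3's well-definedness over Borel partitions is the secondary technical point, and Step 4 is routine once the extension is in place.
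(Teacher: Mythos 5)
First, a remark on the comparison itself: the paper does not prove Theorem \ref{ext} at all — it is imported verbatim from \cite[Theorems 1.1 and 1.3]{teripekka1} — so there is no in-paper argument to measure your proposal against. Judged on its own terms, your architecture is the natural (and, as far as the cited construction goes, the intended) one: extend $T$ to polylipschitz functions via the universal property of the projective tensor product, patch with a Lipschitz partition of unity to obtain a functional on $\Gamma^k_c(X)$, extend over countable Borel partitions via the restrictions $T\lfloor\chi_B$, and prove the boundary identity on the generators $\imath(\sD^k(X))$, concluding by sequential density. Steps 1 and 4 are essentially complete; in particular the computation $\widehat T(d\,\imath(\pi_0,\dots,\pi_k))=\partial T(\pi_0,\dots,\pi_k)$, with the $j\ge 1$ terms killed by the locality axiom and the global $1$ replaced by a cutoff $\sigma\equiv 1$ near $\spt\pi_0$, is exactly right.

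The genuine gap is the one you yourself flag and then do not close: the ``diagonal locality'' statement that $T(\phi\,\pi)=0$ whenever the polylipschitz function $\pi$ vanishes on a neighbourhood of the diagonal over $\spt\phi$. This is \emph{not} a reformulation of the locality axiom (2) for currents — that axiom only concerns simple tensors with one factor constant near $\spt\pi_0$ — and it is precisely the mathematical content of the extension theorem: without it, the sum $\sum_U T(\phi_U\pi_U)$ in Step 2 is not known to be independent of the cover, the local representatives, or the partition of unity, so $\widehat T$ is not well defined on $\Gamma^k_c(X)$, let alone on $\Parcon(X)$. Establishing it requires a real argument (a strengthened locality property of mass-bounded currents combined with approximation in the projective tensor norm), not an appeal to the axiom. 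A second, smaller unresolved point sits in Step 3: $\omega|_{B_i}$ is only a continuous form on the metric subspace $B_i$, whereas the functional $\widehat{T\lfloor\chi_{B_i}}$ built in Step 2 acts on forms continuous on open subsets of $X$; making the pairing meaningful requires the pointwise-norm bound $|\widehat T(\omega)|\le\int_X\|\omega\|_x\,\ud\|T\|(x)$ together with inner regularity of the mass measure, neither of which you supply. So the proposal is a correct outline in which the two load-bearing lemmas are asserted rather than proved.
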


Extensions of currents of finite mass also satisfy natural integrability bounds. Given $\pi\in \Poly^k(X)$ and $V\subset X$, define a variant of the projective norm $L_k(\cdot)$ as follows:
\[
\Lip_k(\pi;V)=\inf\left\{ \sum_j^\infty\|\pi_0^j|_V\|_\infty\Lip(\pi_1^j|_V)\cdots\Lip(\pi_k^j|_V):\ \pi=\sum_j^\infty\pi_0^j\otimes\cdots\otimes\pi_k^j \right\}.
\]

Define the \emph{pointwise norm} $\|\omega\|_x$ of $\omega\in\Gamma^k(X)$ at $x\in X$, by
\[
\|\omega\|_x:=\inf\{\Lip_k(\pi;B(x,r)):r>0\}=\lim_{r\to 0}\Lip_k(\pi;B(x,r))
\]
for any $\pi$ such that $[\pi]_x=\omega(x)$. The map $x\mapsto \|\omega\|_x$ is easily seen to be upper semicontinuous, cf. \cite[Section 6.1]{teripekka1}.

\begin{proposition}\cite[Theorem 1.2]{teripekka1}
	Suppose $T\in M_{k,\loc}(X)$, and denote by $\|T\|$ the mass measure of $T$. Then
	\[
	|\widehat T(\omega)|\le \int_X\|\omega\|_x\ud \|T\|(x),\quad \omega\in \Gamma^k(X).
	\]
\end{proposition}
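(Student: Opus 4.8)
The plan is to read the right-hand side as a sublinear functional on polylipschitz forms, prove the estimate by localizing the current over a fine Borel partition, and on each piece reduce to the elementary mass inequality (\ref{mass}) via the locality of finite-mass currents.

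First I would record that $\Phi(\omega):=\int_X\|\omega\|_x\,\ud\|T\|(x)$ is well defined: the integrand is Borel, as $x\mapsto\|\omega\|_x$ is upper semicontinuous, and $\Phi$ is positively homogeneous and subadditive because $\|\cdot\|_x$ is, these properties descending pointwise from the infimum-of-products definition of $\pi\mapsto\Lip_k(\pi;V)$. Since $\widehat T$ is defined on compactly supported (partition-continuous) forms, the substance of the statement is the case $\omega\in\Gamma_c^k(X)$: for general $\omega\in\Gamma^k(X)$ either $\Phi(\omega)=+\infty$ and there is nothing to prove, or the inequality is understood through the continuous extension of $\widehat T$ that the estimate itself furnishes when $T$ has finite mass, and this extension is recovered from the compactly supported case by exhaustion. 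So assume $K:=\spt\omega$ is compact, whence $\|T\|(K)<\infty$.

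The core step is $|\widehat T(\omega)|\le\Phi(\omega)$ for such $\omega$. Fix $\varepsilon>0$. Using $\|\omega\|_x=\lim_{r\to0}\Lip_k(\pi;B(x,r))$ for a local representative $\pi$ of $\omega$ near $x$, the upper semicontinuity of $x\mapsto\|\omega\|_x$, and compactness of $K$, I would choose finitely many balls $B_i=B(x_i,r_i)$ whose union $N$ is an open neighbourhood of $K$, on each of which $\omega$ is represented by some $\pi_i\in\Poly^k(B_i)$ with $\Lip_k(\pi_i;B_i)\le\sup_{y\in B_i}\|\omega\|_y+\varepsilon$, together with a Borel partition $\{E_i\}_{i=1}^m$ of $N$ satisfying $\cl(E_i)\subset B_i$; adjoin $E_0:=X\setminus N$. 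Decomposing $T=\sum_{i\ge0}T\lfloor E_i$ over this finite Borel partition (with $\|T\lfloor E\|=\|T\|\lfloor E$) and noting $\widehat{T\lfloor E_0}(\omega)=0$ because $\omega$ vanishes on the open set $X\setminus K$, which contains $\spt\|T\lfloor E_0\|$ — here one uses that $\widehat S$ depends only on the germ of a form near $\spt\|S\|$ — one gets $\widehat T(\omega)=\sum_{i=1}^m\widehat{T\lfloor E_i}(\omega)$. For each $i$, $\omega$ agrees with $\pi_i$ as a germ on $B_i\supset\cl(E_i)\supset\spt\|T\lfloor E_i\|$, so $\widehat{T\lfloor E_i}(\omega)$ may be computed from any representation $\pi_i=\sum_l\sigma^l_0\otimes\cdots\otimes\sigma^l_k$ on $B_i$; picking one nearly attaining $\Lip_k(\pi_i;B_i)$ and, by the locality of finite-mass currents, replacing each $\sigma^l_j$ by a function agreeing with it near $E_i$ but constant off a slightly larger subset of $B_i$ — which leaves $\widehat{T\lfloor E_i}(\omega)$ unchanged — inequality (\ref{mass}) applied to $T\lfloor E_i$ yields
\[
|\widehat{T\lfloor E_i}(\omega)|\le\sum_l\Big(\sup_{B_i}|\sigma^l_0|\Big)\prod_{j=1}^k\Lip(\sigma^l_j|_{B_i})\cdot\|T\|(E_i)\le\Big(\sup_{y\in B_i}\|\omega\|_y+2\varepsilon\Big)\|T\|(E_i).
\]
Summing over $i$, refining the partition so that $\sum_i\big(\sup_{y\in B_i}\|\omega\|_y\big)\|T\|(E_i)\to\int_K\|\omega\|_x\,\ud\|T\|(x)$, and letting $\varepsilon\to0$ gives the bound.

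The main obstacle is exactly bridging the \emph{global} Lipschitz constants in (\ref{mass}) and the \emph{asymptotic} data carried by $\|\omega\|_x$ and $\Lip_k(\,\cdot\,;B(x,r))$. This rests on three facts whose verification is the real work: (i) a current of finite mass restricts to Borel sets with $\|T\lfloor E\|=\|T\|\lfloor E$ and is additive over Borel partitions; (ii) the locality of such currents, needed both to truncate the $\sigma^l_j$ to $B_i$ and to know that $\widehat S$ depends only on the germ of a form near $\spt\|S\|$ — the latter following from (\ref{mass}) together with the sequential density of elementary forms implicit in the uniqueness statement of Theorem \ref{ext}; and (iii) that the infimum defining $\Lip_k$ interacts well with the partition refinement, so that choosing near-optimal representations ball by ball still gives the Riemann-sum convergence $\sum_i\big(\sup_{y\in B_i}\|\omega\|_y\big)\|T\|(E_i)\to\int_K\|\omega\|_x\,\ud\|T\|(x)$, which is where the upper semicontinuity of $x\mapsto\|\omega\|_x$ and finiteness of $\|T\|(K)$ enter. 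The seminorm property and the passage to non-compactly supported $\omega$ are routine.
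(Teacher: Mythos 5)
This proposition is not proved in the paper at all: it is imported verbatim from \cite[Theorem 1.2]{teripekka1}, so there is no in-paper argument to compare yours against. Judged on its own terms, your sketch is the natural localization argument and is essentially correct: cover $\spt\omega$ by small balls carrying representatives $\pi_i$ with $\Lip_k(\pi_i;B_i)$ close to $\|\omega\|_{x_i}$, split $T$ over a subordinate Borel partition, apply the mass inequality (\ref{mass}) piecewise to a truncated near-optimal representation, and pass to the limit; the Riemann-sum step $\sum_i\bigl(\sup_{B_i}\|\omega\|\bigr)\|T\|(E_i)\to\int\|\omega\|_x\,\ud\|T\|$ does go through by upper semicontinuity together with dominated convergence on a precompact neighbourhood of $\spt\omega$. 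The one point to flag is a mild circularity rather than a gap: the two facts you lean on most heavily --- that $\widehat{T\lfloor E_i}$ depends only on the germ of $\omega$ near $\spt\|T\lfloor E_i\|$, and that it can be evaluated term by term along an $L_k$-convergent representation of $\pi_i$ --- are not available from anything stated in the present paper; they are part of the construction of the extension $\widehat T$ in \cite{teripekka1} (quoted here as Theorem \ref{ext}), where the mass bound is essentially established at the same stage as these locality and continuity properties. So your argument should be read as a reconstruction of the proof from \cite{teripekka1}, with items (i)--(iii) of your list correctly identified as the substantive inputs, rather than as a derivation from the present paper's stated ingredients.
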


\begin{remark}
	In the forthcoming sections we do not distinguish a metric current $T\in M_{k,\loc}(X)$ from the extension$\widehat T$  provided by Theorem \ref{ext}. We will consider metric currents as acting on $\sD^k(X)$, $\Poly_c^k(X), \Gamma_c^k(X)$, or $\Parcon(X)$ interchangeably and without mentioning it explicitly.
\end{remark}
	
\section{Preliminaries on BLD-maps}
\label{sec:BLD}

\subsection{Branched covers}
\label{sec:branched_covers}

A continuous mapping $f\colon X\to Y$ between metric spaces is a \emph{branched cover} if $f$ is discrete and open; recall that the map $f$ is \emph{discrete} if the pre-image $f^{-1}(y)$ of a point $y\in Y$ is a discrete set, and $f$ is \emph{open} if the image $fU$ of an open set $U\subset X$ is open. A continuous map $f\colon X\to Y$ is \emph{proper} if the pre-image $f^{-1}E$ of a compact set $E\subset Y$ is compact. In what follows, all mappings between metric spaces are continuous unless otherwise stated.

A pre-compact domain $U\subset X$ is a \emph{normal domain of $f\colon X\to Y$} if $\partial fU = f\partial U$. Further, if $x\in U$ has the property that $f^{-1}f(x) \cap \overline{U} = \{x\}$, we say that $U$ is a \emph{normal neighborhood of $x$ (with respect to $f$)}.

We recall that, given a branched cover $f\colon X\to Y$ and a normal domain $U\subset X$ of $f$, the restriction $f|_{U} \colon U \to fU$ is a proper map; see e.g.~Rickman \cite{ric93} and V\"ais\"al\"a \cite{vai66}. 

Let $f:X\to Y$ be a branched cover between locally compact spaces. For $x\in X$ and $r>0$, we denote by $U_f(x,r)$ the connected component of $f\inv B(f(x),r)$ containing $x$. When the map $f$ is clear from the context we omit the subscript and write $U(x,r)$ in place of $U_f(x,r)$. The following lemma is extensively used throughout the paper. It follows from \cite[Lemma 2.1]{lui17}; see also \cite[Lemma I.4.9]{ric93} and  \cite[Lemma 5.1.]{vai66}.

\begin{lemma}\label{spread}
	Let $f:X\to Y$ be a branched cover between locally compact spaces $X$ and $Y$. Then the following conditions hold.
	\begin{itemize}
		\item[(a)] For every $x\in X$, there exists a radius $r_x>0$, for which $U(x,r)$ is a normal domain of $x$ for every $r<r_x$. Furthermore, given a compact set $K\subset X$ and $y\in f(K)$, there exists $r_y>0$ so that $U(x,r)$ is a normal neighborhood for $x$, for every $x\in f\inv(y)\cap K$ and $r<r_y$.
		\item[(b)] If $U'\subset U\subset X$ are normal domains of $x\in X$, then $U\cap f\inv(f(U'))=U'$. 
	\end{itemize}
\end{lemma}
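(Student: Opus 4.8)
The plan is to establish Lemma~\ref{spread} as a local statement derived from the corresponding facts in the literature, so the main task is to assemble the pieces coherently rather than to prove anything genuinely new. First I would reduce part~(a) to the case of a single point: fix $x\in X$ and choose, using local compactness of $X$, a precompact open neighborhood $W$ of $x$. Since $f$ is discrete, $f\inv(f(x))$ is a discrete set, so $f\inv(f(x))\cap\overline W$ is finite; in particular we may shrink $W$ so that $f\inv(f(x))\cap\overline W=\{x\}$. Now I would invoke the cited result (\cite[Lemma~2.1]{lui17}, with \cite[Lemma~I.4.9]{ric93} and \cite[Lemma~5.1]{vai66} as classical antecedents) which says precisely that for a branched cover between locally compact spaces, and for $x$ with an isolating precompact neighborhood as above, there is $r_x>0$ such that for all $r<r_x$ the component $U(x,r)$ of $f\inv B(f(x),r)$ through $x$ is contained in $W$ and satisfies $\partial fU(x,r)=f\partial U(x,r)$, i.e.\ it is a normal domain; since $f\inv(f(x))\cap\overline{U(x,r)}\subset f\inv(f(x))\cap\overline W=\{x\}$, it is in fact a normal neighborhood of $x$.

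For the ``furthermore'' part of~(a), given a compact $K\subset X$ and $y\in f(K)$, the fiber $f\inv(y)\cap K$ is compact and discrete, hence finite, say $\{x_1,\ldots,x_m\}$. Applying the single-point statement to each $x_i$ yields radii $r_{x_i}>0$; but one must be slightly careful because the components $U(x_i,r)$ of $f\inv B(y,r)$ must be pairwise disjoint and must each be a \emph{normal neighborhood}, i.e.\ meet the fiber $f\inv(y)$ only in $x_i$. Since the full fiber $f\inv(y)$ is discrete (not just its intersection with $K$), one can separate the $x_i$ from each other and from the rest of $f\inv(y)$ by disjoint precompact open sets $W_i$, then run the single-point argument inside each $W_i$; setting $r_y=\min_i r_{x_i}$ gives the claim. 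This uniform choice of $r_y$ over the finite fiber is the only mildly delicate point, and it is exactly what the cited lemmas are designed to provide.

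For part~(b), suppose $U'\subset U$ are normal domains of a common point $x$. The inclusion $U'\subset U\cap f\inv(fU')$ is immediate. For the reverse inclusion, I would argue that $U\cap f\inv(fU')$ is open (preimage of the open set $fU'$ intersected with the open set $U$) and closed in $U$: if $z\in U$ lies in the closure of $U\cap f\inv(fU')$ relative to $U$, then $f(z)\in\overline{fU'}$; since $U'$ is a normal domain, $\overline{fU'}=fU'\cup f\partial U'=f\overline{U'}$, and because $z\in U$ while $\partial U'\cap U=\varnothing$ would force... here I would use that $f|_{\overline{U'}}$ is proper onto $f\overline{U'}$ and that $U'$ being a normal domain means $f$ does not map interior points of $U'$ to $\partial fU'$; combined with connectedness of $U$ (a normal domain is a domain, hence connected) and the fact that $U'\ne\varnothing$, this forces $U\cap f\inv(fU')$ to be all of a connected component of $U$, hence all of $U'$'s component, giving $U\cap f\inv(fU')=U'$. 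The cleanest route is again to quote \cite[Lemma~2.1]{lui17}, which states~(b) directly; I would present the short connectedness/clopen argument only as far as needed to make the paper self-contained.

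I expect no serious obstacle here: the statement is a standard packaging of normal-domain theory for branched covers, and the hard analytic content lies entirely in the cited works of Väisälä, Rickman, and Luisto. The one place to be attentive is ensuring the radius $r_y$ in~(a) can be chosen uniformly over the (finite) fiber $f\inv(y)\cap K$ while simultaneously guaranteeing the normal-neighborhood property with respect to the \emph{whole} fiber $f\inv(y)$; this is handled by the discreteness of $f\inv(y)$ together with local compactness, as sketched above.
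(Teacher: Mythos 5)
Your proposal matches the paper's treatment: the paper offers no proof of Lemma~\ref{spread} at all, deriving it directly from \cite[Lemma 2.1]{lui17} (with \cite[Lemma I.4.9]{ric93} and \cite[Lemma 5.1]{vai66} as antecedents), which is exactly the fallback you name, and your assembly of part~(a) from the single-point case via finiteness of $f\inv(y)\cap K$ is sound. The one loose spot is your self-contained sketch of~(b): the clopen argument only shows $U'$ is open and closed in $U\cap f\inv(fU')$, and to exclude a further component $C$ one must use that $U$ is a normal \emph{neighborhood} of $x$ (such a $C$ would have $fC$ clopen in the connected set $fU'$, hence $fC=fU'\ni f(x)$, producing a second point of $f\inv(f(x))$ in $\overline U$, a contradiction) — but since you ultimately defer to the citation, as the paper itself does, this does not affect correctness.
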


\begin{remark}\label{proper}
	It follows that, if $f:X\to Y$ is a proper branched cover, then, for every $x\in X$, there is a radius $r_0>0$ for which \[ f\inv B_r(f(x))=\bigcup_{x'\in f\inv(f(x))}U(x',r), \] where $U(x',r)$ is a normal neighborhood of $x'\in f\inv(f(x))$, for each $x'\in f\inv(f(x))$.
\end{remark}

\subsection{Oriented cohomology manifolds}\label{sec: cohomology manifolds} Following \cite{hei02} we say that a separable and locally compact space $X$ is an \emph{oriented cohomology $n$-manifold} if 
\begin{itemize}
	\item[(a)] $X$ has finite covering dimension,
	\item[(b)] $H^k_c(U;\Z)=0$ for each open set $U\subset X$ for $k\ge n+1$,
	\item[(c)] $H_c^n(X;\Z)\simeq \Z$, and
	\item[(d)] each point $x$ and its neighborhood $U$ contains a neighborhood $V\subset U$ of $x$ for which
	\begin{align*}
		H^{k}_c(V;\Z)=\left\{
		\begin{array}{ll}
			0&,\ k={n-1}\\
			\Z&,\ k=n
		\end{array}
		\right.
	\end{align*}
	and the standard  homomorphism $$ H^n_c(W;\Z)\to H^n_c(V;\Z)$$ is a surjection for any neighborhood $W$ of $x$ contained in $V$.
\end{itemize}
The notation $H^\ast_c( - ;\Z)$ above refers to the compactly supported Alexander-Spanier cohomology  with integer coefficients. We refer to \cite[Definition 1.1]{hei02} and the ensuing discussion for more details. Here we only mention that a more widely used notion of cohomology manifolds requires \emph{all} local cohomology groups of dimension $0<k<n$ to vanish, see e.g. \cite[Definition 6.17]{bredon97}.

\subsection{Global and local degree} 
Let $X$ and $Y$ be oriented cohomology manifolds of the same dimension $n\in\N$ and fix \emph{orientations} $c_X$ and $c_Y$ of $X$ and $Y$, i.e. generators $c_X$ and $c_Y$ of $H^n_c(X;\Z)$ and $H^n_c(Y;\Z)$, respectively. For open sets $U\subset X$ and $V\subset Y$ we have local orientations given by $c_U=\iota_{UX}^\ast c_X$ and $c_V=\iota_{VY}^\ast c_Y$, where $\iota_{UX}:U\hookrightarrow X$ and $\iota_{VY}:V\hookrightarrow Y$ are inclusions. As described in \cite{ric93, vai66, hei02}, continuous maps $X\to Y$ admit a local degree in the following sense. Here we follow the presentation in \cite{hei02}. 

Given a precompact domain $U\subset X$, the \emph{local degree $\mu_f(U,y)\in\Z$ with respect to a point} $y\in Y\setminus f(\partial U)$ and domain $U$ is
\begin{enumerate}
	\item 0 if $y\notin \inte f(U)$, and otherwise 
	\item the unique integer $\lambda\in \Z$ for which the pull-back homomorphism \[(f|_U)^\ast: H^n_c(V)\to H^n_c(U)\textrm{ satisfies } f^\ast[c_{V}]=\lambda[c_U],  \] where $V$ is the component of $\inte f(U)\setminus f(\partial U)$ containing $y$; note that $y\in \inte f(U)$ in this case. 
\end{enumerate}
Then $\mu_f(U,y)$ is constant in each component of $\inte f(U)\setminus f(\partial U)$.

If $f:X\to Y$ is a proper map, it admits a \emph{global degree} $\deg f$, which is the unique integer $\lambda\in\Z$ for which the pull-back $f^* \colon H^n_c(Y)\to H^n_c(X)$ in cohomology satisfies $$ f^\ast c_Y=\lambda\ c_X.$$

\bigskip\noindent A standard property of the local degree is that, for precompact domains $V\subset U$ and a point $y\in Y$ satisfying $y\notin f(\partial V)\cup f(\partial U)$ and $f\inv(y)\cap U\subset V$, we have \[ \mu_f(V,y)=\mu_f(U,y). \] This immediately yields a summation formula
\begin{equation}
	\label{eq:summation}
	\mu_f(U,y)=\sum_{i=1}^N\mu_f(U_i,y)
\end{equation}
for pairwise disjoint domains $U_1,\ldots,U_N$ contained in $U$ and satisfying $$
y\notin f(\partial U_1)\cup\cdots\cup f(\partial U_N)\ \textrm{ and }
f\inv(y)\cap U\subset U_1\cup\cdots\cup U_N.
$$

As a consequence we obtain that, for a branched cover $f\colon X\to Y$, the local degree function $i_f:X\to\Z$, defined by 
\[
x\mapsto\mu_f(U,f(x)),
\]
where $U$ is any normal neighborhood of $x$, is well-defined. For branched covers, we may express the summation formula \eqref{eq:summation} in terms of the local index. Indeed, let $f:X\to Y$ be a branched cover between oriented cohomology manifolds of the same dimension and suppose $U\subset X$ is a normal domain for $f$. Then
\begin{equation}\label{summation}
	\mu_f(U,y)=\sum_{x\in f\inv(y)\cap U}i_f(x)
\end{equation}
for $y\in Y\setminus f(\partial U)$. If $f$ is a proper branched cover then
\begin{equation}
	\deg f=\sum_{x\in f\inv(y)}i_f(x)
\end{equation}
for any $y\in f(X)$; see \cite{ric93} and \cite{hei02}.

The local index satisfies a chain rule analogous to the chain rule for derivatives. More precisely, given branched covers $f:X\to Y$ and $g:Y\to Z$ between oriented cohomology manifolds, we have that 
\begin{equation}
	\label{eq:indexcomp} 
	i_{g\circ f}(x)=i_g(f(x))i_f(x) 
\end{equation}
for all $x\in X$.




A branched cover $f:X\to Y$ is \emph{sense preserving} (\emph{sense reversing}) if $\mu_f(D,y)>0$ ($\mu_f(D,y)<0$) for all precompact domains $D\subset X$ and $y\in f(D)\setminus f(\partial D)$. It is known that branched cover between oriented cohomology manifolds is either sense preserving or sense reversing \cite{vai66}. Thus we may always choose the orientations $c_X$ and $c_Y$ of $X$ and $Y$, respectively, so that a given branched cover $f$ is sense preserving. In particular we may assume $i_f\ge 1$ everywhere.

\subsubsection*{Branch set} Local homeomorphisms are always branched covers. However the converse fails, that is, a branched cover $f:X\to Y$ between oriented cohomology manifolds need not be a local homeomorphism. We define the set $B_f$ to be the set of points $x\in X$ for which $f$ is not a local homeomorphism at $x$. The branch set is easily seen to be a closed set. 

It is known that the branch set $B_f$ as well as its image $fB_f$ of a branched cover between oriented cohomology $n$-manifolds  has topological dimension at most $n-2$; see \cite{vai66}. In particular $B_f$ and $fB_f$ do not locally separate $X$ and $Y$, respectively, that is, $U\setminus B_f$ (resp. $V\setminus fB_f$) is path connected for every open $U\subset X$ (resp. $V\subset Y$); see also \cite[3.1]{hei02}.

An orientation preserving proper branched cover $f:X\to Y$ is $(\deg f)$-to-one in the sense that, for any $y\in Y\setminus f B_f$, the preimage $f\inv(y)$ contains exactly $\deg f$ points.

\subsection{BLD-maps and path-lifting}
A BLD-map $f:X\to Y$ between metric spaces $X$ and $Y$ is a branched cover satisfying the \emph{bounded length distortion} inequality (\ref{BLD}) for some $L\ge 1$. BLD-maps first appeared in \cite{mar88} as a subclass of quasiregular maps between Euclidean spaces, and in \cite{hei02} in the present metric context. We refer to \cite{lui17} for alternative characterizations of BLD-maps between metric spaces. 


A path-lifting yields a bijection between preimages of points not in the image of the branch set of the map. In what follows, we use the following version of \cite[Lemma 4.4]{lui17}. We omit the details.

\begin{lemma}\label{bijection}
	Let $f:X\to Y$ be an $L$-BLD map between two  oriented cohomology manifolds. Suppose there exists a geodesic joining $p,q\notin fB_f$. Let $K\subset X$ is a compact set. Then there is a bijection $\psi:f\inv(p)\to f\inv(q)$ satisfying
	\begin{equation}\label{bilip}
		d(p,q)/L\le d(x,\psi(x))\le Ld(p,q)
	\end{equation}
	for every $x\in f\inv(p)\cap K.$
\end{lemma}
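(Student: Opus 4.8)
The plan is to reduce Lemma~\ref{bijection} to the path-lifting statement of \cite[Lemma 4.4]{lui17} by first producing lifts of the connecting geodesic and then verifying the bi-Lipschitz bound~\eqref{bilip} directly from the BLD-inequality~\eqref{BLD}. Since $p,q\notin fB_f$, the fibers $f\inv(p)$ and $f\inv(q)$ consist of points where $f$ is a local homeomorphism, and by Remark~\ref{proper} and Lemma~\ref{spread} there are pairwise disjoint normal neighborhoods of these preimage points. Fix a geodesic $\gamma\colon[0,1]\to Y$ with $\gamma(0)=p$, $\gamma(1)=q$. For each $x\in f\inv(p)$, the path-lifting result produces a (unique, since $f$ is a local homeomorphism near $p$ and along the lift we stay over points that may meet $fB_f$, but the total lift is still well-defined by the maximal lifting construction) lift $\tilde\gamma_x\colon[0,1]\to X$ with $\tilde\gamma_x(0)=x$; set $\psi(x):=\tilde\gamma_x(1)\in f\inv(q)$.

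First I would establish that $\psi$ is a bijection. Surjectivity follows by lifting the reversed geodesic $\bar\gamma(t):=\gamma(1-t)$ starting at each $y\in f\inv(q)$, obtaining a map $\phi\colon f\inv(q)\to f\inv(p)$; injectivity of $\psi$ and $\phi$ and the identity $\phi\circ\psi=\id$ follow from uniqueness of lifts with a prescribed endpoint, which holds here because $p,q\notin fB_f$ guarantees that distinct lifts start (resp.\ end) at distinct points, and the lifting is reversible. This is the part where I would invoke \cite[Lemma 4.4]{lui17} essentially verbatim; the role of the compact set $K$ is only to make the quantitative estimate uniform, so for the bijection statement $K$ plays no part.

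Next I would prove~\eqref{bilip}. Fix $x\in f\inv(p)\cap K$ and let $\tilde\gamma_x$ be its lift. Because $f\circ\tilde\gamma_x=\gamma$ and $\gamma$ is a geodesic of length $d(p,q)$, the upper BLD-estimate gives $\ell(\tilde\gamma_x)\le L\,\ell(\gamma)=L\,d(p,q)$; since $\tilde\gamma_x$ joins $x$ to $\psi(x)$ we get $d(x,\psi(x))\le\ell(\tilde\gamma_x)\le L\,d(p,q)$, which is the right-hand inequality. For the left-hand inequality, one cannot simply reverse this, since $\tilde\gamma_x$ need not be a geodesic in $X$; instead one uses that any path $\sigma$ in $X$ from $x$ to $\psi(x)$ satisfies $\ell(f\circ\sigma)\le L\,\ell(\sigma)$, while $f\circ\sigma$ is a path in $Y$ from $p$ to $q$, hence $\ell(f\circ\sigma)\ge d(p,q)$; therefore $\ell(\sigma)\ge d(p,q)/L$, and taking the infimum over $\sigma$ (using that $X$ is a length space near the relevant sets, or more precisely that the relevant portion of $X$ is rectifiably connected — which follows from local geodesicity of oriented cohomology manifolds carrying a BLD-map, cf.\ \cite{hei02,lui17}) gives $d(x,\psi(x))\ge d(p,q)/L$.

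The main obstacle, and the reason the compact set $K$ appears in the statement, is ensuring that the lower length bound $\ell(\sigma)\ge d(x,\psi(x))$ can be replaced by a genuine \emph{distance} bound, i.e.\ that $d(x,\psi(x))=\inf_\sigma\ell(\sigma)$; this requires a local length-space property near $f\inv(p)\cap K$, which is uniform precisely because $K$ is compact and one can cover $f\inv(p)\cap K$ by finitely many normal neighborhoods on which $f$ behaves like a covering map. Once that uniformity is in hand, the estimate~\eqref{bilip} is immediate from the two halves of~\eqref{BLD} as above, and the lemma follows. Since all of this is carried out in detail in \cite[Lemma 4.4]{lui17}, in the write-up I would state the reduction and then say ``we omit the details,'' as the excerpt already does.
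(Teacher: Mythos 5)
Your proposal takes the same route as the paper, which likewise reduces the lemma to the path-lifting statement of \cite[Lemma 4.4]{lui17} and omits the details; your derivation of the two estimates in \eqref{bilip} from \eqref{BLD} (the upper bound from the length of a lift of the geodesic, the lower bound by pushing forward an arbitrary path from $x$ to $\psi(x)$ and using that its image joins $p$ to $q$, together with a length-space property of $X$) is exactly the intended argument. One concrete caveat: the parenthetical claim that the lift of $\gamma$ starting at $x$ is \emph{unique} is not correct as written, and neither is the appeal to ``uniqueness of lifts with a prescribed endpoint'' for injectivity. Only the endpoints $p,q$ are assumed to lie outside $fB_f$, so the geodesic may pass through $fB_f$ in its interior, and maximal lifts through a branch point are genuinely non-unique. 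The mechanism that actually produces the bijection --- and this is what Luisto's lemma supplies --- is a maximal system of essentially separate total lifts of $\gamma$: the number of lifts in such a system emanating from (respectively terminating at) a given point $x$ equals $i_f(x)$, and since $i_f\equiv 1$ on $f\inv(p)\cup f\inv(q)$ because $p,q\notin fB_f$, the start-point and end-point assignments of the system are bijections onto the two fibers. Since you explicitly defer to \cite{lui17} for precisely this step, your proof goes through, but the justification you sketch for the well-definedness and injectivity of $\psi$ would not survive being written out literally.
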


\section{The pull-back of metric currents by BLD-maps}
\label{sec:pull-back}

\medskip\noindent Given a branched cover $f:X\to Y$ and set $E\subset X$ we say that a ball $B_r(y)\subset Y$ is a \emph{spread neighborhood} (of $y\in Y$) \emph{with respect to $E$} if $U_f(x,r)$ is a normal neighborhood of $x$ for each $x\in f\inv(y)\cap E$. We say that $B_r(y)$ is a \emph{spread neighborhood} if it is a spread neighborhood with respect to $X$.

Recall that, by Lemma \ref{spread}, for a compact set $K\subset X$, sufficiently small balls $B_r(y)$, for $y\in Y$ and $r>0$, are spread neighborhoods with respect to $K$. By Remark \ref{proper}, sufficiently small balls $B_r(y)$, for $y\in Y$ and $r>0$, are spread neighborhoods for proper BLD-maps. 

We say that a metric space $X$ is \emph{locally geodesic} if any point $x\in X$ has a neighborhood $U\subset X$ with the property that, for any two points $p,q\in U$, there is a geodesic joining them, i.e. a curve $\gamma:[0,d(p,q)]\to X$ satisfying $$d(p,q)=\ell(\gamma).$$ We call such neighborhoods \emph{geodesic neighborhoods}. Note, however, that the geodesic $\gamma$ is not required to lie inside the neighborhood $U$. We also say that a ball $B_r(y)\subset Y$ is a \emph{geodesic spread neighborhood with respect to a set $E\subset X$} if it is both a geodesic neighborhood, and a spread neighborhood with respect to $E$. Similarly, a \emph{geodesic spread neighborhood} is a spread neighborhood that is also a geodesic neighborhood.

We use the notation $\gamma:x\curvearrowright y$ to denote a curve $\gamma:[a,b]\to X$ joining two points $x,y\in X$.

\bigskip\noindent\emph{In what follows, we consider only locally geodesic oriented cohomology manifolds.}

\subsubsection*{Push-forward of functions by BLD-maps} Recall that the push-forward of a compactly supported Borel function $g:X\to\R$ by a BLD-map $f:X\to Y$ is the function $f_\sharp g:Y\to \R$, 
\[
y\mapsto \sum_{x\in f\inv(x)}i_f(x)g(x).
\]
It is not difficult to see that the push-forward $f_\sharp g$ is a Borel function.

\begin{lemma}\label{locpush}
	Let $f:X\to Y$ be a proper $L$-BLD map. 
	Given a Lipschitz function $\eta\in \LIP_c(X)$, the push-forward $f_\#\eta:Y\to \R$ is locally Lipschitz and satisfies the bound \[  \Lip f_\#\eta (y)\le L(\deg f)\Lip(\eta) \]for each $y\in Y$. Furthermore, $f_\#\eta$ satisfies the estimate 
	\[ 
	\|f_\#\eta\|_\infty\le (\deg f)\|\eta\|_\infty. 
	\]
\end{lemma}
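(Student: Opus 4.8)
The plan is to combine the path-lifting bijection of Lemma \ref{bijection} with the summation formula \eqref{summation} and the degree bound $\sum_{x\in f\inv(y)}i_f(x)=\deg f$. First I would establish the sup-norm bound, which is immediate: for every $y\in Y$,
\[
|f_\#\eta(y)| = \Bigl| \sum_{x\in f\inv(y)} i_f(x)\eta(x) \Bigr| \le \|\eta\|_\infty \sum_{x\in f\inv(y)} i_f(x) = (\deg f)\|\eta\|_\infty,
\]
using $i_f\ge 1$ (after fixing orientations so that $f$ is sense-preserving) and properness so that $f\inv(y)$ is finite.

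For the Lipschitz estimate I would argue locally. Fix $y_0\in Y$ and a geodesic neighborhood $U$ of $y_0$; it suffices to bound the Lipschitz constant of $f_\#\eta$ on $U$ near $y_0$, since local geodesy gives geodesics joining nearby points. Shrinking $U$ if necessary, Remark \ref{proper} lets me take $U$ small enough that $f\inv U = \bigsqcup_{x\in f\inv(y_0)} U(x,r)$ with each $U(x,r)$ a normal neighborhood of the corresponding preimage point, and $\spt\eta$ meets only finitely many of these; let $K$ be a compact neighborhood of $f\inv(y_0)\cap\spt\eta$. For $p,q\in U$, first reduce to the case $p,q\notin fB_f$: since $fB_f$ has topological dimension $\le n-2$ and hence does not locally separate $Y$, the complement $U\setminus fB_f$ is connected, and $f_\#\eta$ restricted there determines the bound on all of $U$ by continuity (both sides of the desired inequality are continuous in $p,q$, and $f_\#\eta$ is continuous since it is locally Lipschitz away from $fB_f$ and globally bounded — actually the cleanest route is to prove the Lipschitz bound on $U\setminus fB_f$ and then note $f_\#\eta$ extends continuously across $fB_f$ with the same local Lipschitz constant). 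Now apply Lemma \ref{bijection} to get a bijection $\psi\colon f\inv(p)\to f\inv(q)$ with $d(x,\psi(x))\le Ld(p,q)$ for $x\in f\inv(p)\cap K$; moreover $i_f(x)=i_f(\psi(x))$ since the bijection is realized by lifting a path avoiding $B_f$, along which the local index is constant (this uses the invariance of $\mu_f(U,\cdot)$ on components of $\inte f(U)\setminus f(\partial U)$, i.e. \eqref{eq:summation} and \eqref{summation}). Then
\[
f_\#\eta(p) - f_\#\eta(q) = \sum_{x\in f\inv(p)} i_f(x)\bigl(\eta(x) - \eta(\psi(x))\bigr),
\]
and only terms with $x\in f\inv(p)\cap\spt\eta\subset K$ (or $\psi(x)\in\spt\eta$) contribute, giving
\[
|f_\#\eta(p) - f_\#\eta(q)| \le \sum_{x\in f\inv(p)} i_f(x)\,\Lip(\eta)\,d(x,\psi(x)) \le L\,\Lip(\eta)\,d(p,q)\sum_{x\in f\inv(p)} i_f(x) = L(\deg f)\Lip(\eta)\,d(p,q).
\]
Taking the infimum of Lipschitz constants over shrinking neighborhoods of $y_0$ yields $\Lip f_\#\eta(y_0)\le L(\deg f)\Lip(\eta)$.

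The main obstacle is the bookkeeping that makes the application of Lemma \ref{bijection} legitimate: one must ensure that (i) the compact set $K$ can be chosen uniformly as $p,q$ range over a neighborhood of $y_0$ small enough that all relevant preimages stay in $K$, which is where properness and the spread-neighborhood structure of Remark \ref{proper} enter; (ii) the index $i_f$ is genuinely constant along the lifted paths, so that the bijection pairs up preimages with equal multiplicity — this is the reason to first restrict to $p,q\notin fB_f$; and (iii) the continuous extension of $f_\#\eta$ across $fB_f$ preserves the pointwise Lipschitz bound, which follows because $fB_f$ does not locally separate $Y$ and $f_\#\eta$ is bounded and locally Lipschitz on the dense open complement. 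None of these steps is deep, but they are the substance of the argument; the length and index computations themselves are routine.
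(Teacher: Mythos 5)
Your sup-norm estimate is identical to the paper's. For the Lipschitz estimate you take a genuinely different route: you work on the dense open set $U\setminus fB_f$, where every local index equals $1$ and Lemma \ref{bijection} pairs the fibres of $p$ and $q$ one-to-one, and then try to carry the bound across $fB_f$ by a density/extension argument. The paper instead handles arbitrary $p$ and $q\in B_r(p)$ in one stroke: it decomposes $f\inv(q)$ according to the normal neighborhoods $U_x$ of the points $x\in f\inv(p)$, writes $i_f(x)\eta(x)-\sum_{y\in f\inv(q)\cap U_x}i_f(y)\eta(y)=\sum_{y\in f\inv(q)\cap U_x}i_f(y)(\eta(x)-\eta(y))$ using the summation formula \eqref{summation}, and bounds $d(x,y)\le Ld(p,q)$ by lifting a geodesic from $p$ to $q$. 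This never needs $p,q\notin fB_f$ and never needs a bijection between fibres.

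The gap in your version is step (iii). Knowing that $f_\#\eta$ is bounded and $L(\deg f)\Lip(\eta)$-Lipschitz on the dense, open, connected set $U\setminus fB_f$ tells you that this restriction admits a Lipschitz extension to $U$ with the same constant, but it does \emph{not} tell you that this extension agrees with the pointwise-defined value $f_\#\eta(p)=\sum_{x\in f\inv(p)}i_f(x)\eta(x)$ at points $p\in fB_f$; a function equal to $0$ on a dense open set and to $1$ at one exceptional point is bounded and locally Lipschitz off that point. What you must actually prove is that $f_\#\eta(q)\to f_\#\eta(p)$ as $q\to p$ through $U\setminus fB_f$, and the only available mechanism is exactly the paper's computation: for $q$ near $p$, the preimages of $q$ in $U_x$ carry indices summing to $i_f(x)$ and lie within $Ld(p,q)$ of $x$, so $\sum_{y\in f\inv(q)\cap U_x}i_f(y)\eta(y)\to i_f(x)\eta(x)$. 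Once you are forced to run that argument at the branch points, restricting to $Y\setminus fB_f$ in the first place buys you nothing, so you should simply run it everywhere, as the paper does. (Your worry in (ii) about matching indices along lifts is vacuous: for $p,q\notin fB_f$ every preimage has $i_f\equiv 1$, which is the only reason the unweighted bijection computes $f_\#\eta$ there at all.)
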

\begin{proof}
	The  second estimate follows by a direct computation. Indeed, for any $p\in Y$, we have
	\begin{align*}
		|f_\#\eta(p)|\le \sum_{x\in f\inv(p)}i_f(x)\|\eta\|_\infty=(\deg f)\|\eta\|_\infty,
	\end{align*}
	by the summation formula (\ref{summation}) for the local index.

	We now prove the first estimate. Let $p\in Y$ and take a geodesic spread neighborhood $B_r(p)$ of $p$. The preimage $$f^{-1}B_r(p)=\bigcup_{x\in f^{-1}(p) }U_x$$ is a mutually disjoint union of normal neighborhoods $U_x$ of preimage points $x$. For any $q\in B_r(p)$ we have
	\[f_\#\eta(q)=\sum_{y\in U\cap f^{-1}(q)}i_f(y)\eta(y)=\sum_{x\in f^{-1}(p)}\sum_{y\in f^{-1}(q)\cap U_x }i_f(y)\eta(y)\] and further
	\begin{align}\label{star}
		|f_\#\eta(p)-f_\#\eta(q)|\le \sum_{x\in f^{-1}(p)}\left|i_f(x)\eta(x)-\sum_{y\in f^{-1}(q)\cap U_x }i_f(y)\eta(y)\right|.
	\end{align}
	By substituting the local summation formula (\ref{summation}) into (\ref{star}) we have the estimate
	\begin{align*}
		|f_\#\eta(p)-f_\#\eta(q)|\le &\sum_{x\in f^{-1}(p)}\left|\sum_{y\in f^{-1}(p)\cap U_x }i_f(y)\big(\eta(x)-\eta(y)\big)\right| \\
		\le & \sum_{x\in f^{-1}(p)}\sum_{y\in f^{-1}(p)\cap U_x }i_f(y)|\eta(x)-\eta(y)|\\
		\le & \Lip(\eta)\sum_{x\in f^{-1}(p)}\sum_{y\in f^{-1}(p)\cap U_x }i_f(y) d(x,y).
	\end{align*}
	For each $y\in U_x$, we have
	\[
	d(x,y)\le \ell(\gamma')\le L\ell(\gamma)=Ld(p,q),
	\]
	where $\gamma':x\curvearrowright y$ is a lift of a geodesic $\gamma:p\curvearrowright q$. Thus
	\begin{align*}
		|f_\#\eta(p)-f_\#\eta(q)|\le & \Lip(\eta) Ld(p,q)\sum_{x\in  f^{-1}(p)}\sum_{y\in f^{-1}(p)\cap U_x }i_f(y)\\
		=&Lr\Lip(\eta)\sum_{x\in f^{-1}(p)}i_f(x)=Ld(p,q)\Lip(\eta)\deg f.
	\end{align*}
	It follows that $\lip f_\#\eta(p)\le L\Lip(\eta)$ for every $p\in Y$. Suppose that $y\in Y$ and $B_r(y)$ is a spread neighborhood. Then, for any $p,q\in B_r(y)$, choosing a geodesic $\gamma$ connecting them, we have \[ |f_\#\eta(p)-f_\#\eta(q)|\le \int_0^1\Lip f_\#\eta(\gamma(t))|\dot\gamma_t|\ud t\le Ld(p,q)(\deg f)\Lip(\eta). \] This proves that $f_\#\eta$ is locally Lipschitz and satisfies the first estimate in the claim.
\end{proof}

The following lemma shows that the push-forward is natural  with respect to composition.
\begin{lemma}\label{compopush}
	Let $f:X\to Y$ and $g:Y\to Z$ be proper BLD-maps between locally geodesic oriented cohomology manifolds. Given a Borel function $h:X\to\R$ we have 
	\[ 
	g_\#(f_\#h)(z)=(g\circ f)_\#h(z) 
	\] 
	for every $z\in Z$.
\end{lemma}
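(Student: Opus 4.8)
The plan is to verify the composition formula pointwise by unwinding the definition of the push-forward and invoking the chain rule for the local index \eqref{eq:indexcomp}. Fix $z\in Z$. By the definition of $g_\#$, we have
\[
g_\#(f_\#h)(z)=\sum_{y\in g\inv(z)}i_g(y)(f_\#h)(y)=\sum_{y\in g\inv(z)}i_g(y)\sum_{x\in f\inv(y)}i_f(x)h(x).
\]
Since $f$ and $g$ are proper, the fibers $g\inv(z)$ and $f\inv(y)$ are finite, so there are no convergence issues and we may reorganize the double sum. The key observation is that the index set $\{(y,x): y\in g\inv(z),\ x\in f\inv(y)\}$ is in bijection with $(g\circ f)\inv(z)$ via $(y,x)\mapsto x$: indeed $x\in (g\circ f)\inv(z)$ if and only if $f(x)\in g\inv(z)$, and $y=f(x)$ is then uniquely determined. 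Hence
\[
g_\#(f_\#h)(z)=\sum_{x\in (g\circ f)\inv(z)}i_g(f(x))\,i_f(x)\,h(x).
\]

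Applying the chain rule \eqref{eq:indexcomp}, namely $i_{g\circ f}(x)=i_g(f(x))i_f(x)$, the right-hand side equals $\sum_{x\in(g\circ f)\inv(z)}i_{g\circ f}(x)h(x)=(g\circ f)_\#h(z)$, which is the desired identity. One small point worth recording is that the composition $g\circ f$ is again a proper BLD-map between locally geodesic oriented cohomology manifolds (properness is clearly preserved under composition, and \eqref{BLD} composes with constant $L_fL_g$), so $(g\circ f)_\#h$ is well-defined in the first place.

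There is really no serious obstacle here: the statement is essentially a bookkeeping identity, and the only ingredient beyond reindexing a finite double sum is the chain rule for the local index, which is already established in \eqref{eq:indexcomp}. The mildest care needed is to ensure that $h$ being merely Borel (not necessarily compactly supported in this lemma's generality, though in applications it is) does not cause the sums to be ill-defined — but since all fibers are finite by properness, each sum has finitely many terms for every fixed $z$, so everything is legitimate.
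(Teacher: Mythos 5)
Your proof is correct and follows exactly the paper's argument: decompose $(g\circ f)\inv(z)$ as the disjoint union of the fibers $f\inv(y)$ over $y\in g\inv(z)$, reorganize the finite double sum, and apply the chain rule $i_{g\circ f}(x)=i_g(f(x))i_f(x)$ from \eqref{eq:indexcomp}. The extra remarks on properness guaranteeing finite fibers and on $g\circ f$ being a proper BLD-map are sensible housekeeping but do not change the argument.
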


\begin{proof}
	Let $z\in Z$. Then
	\[
	(g\circ f)\inv(z)=\bigcup_{y\in g\inv(z)}f\inv(y)
	\]
	and, by \eqref{eq:indexcomp}, $i_{g\circ f}(x)=i_g(f(x))i_f(x)$ for any $x\in (g\circ f)\inv(z)$.
	
	Thus
	\begin{align*}
		g_\# (f_\#h)(z)&=\sum_{y\in g\inv(z)}i_g(y)f_\#h(y)=\sum_{y\in g\inv(z)}\sum_{x\in f\inv(y)}i_g(y)i_f(x)h(x)\\
		&=\sum_{y\in g\inv(z)}\sum_{x\in f\inv(y)}i_g(f(x))i_f(x)h(x)=\sum_{x\in(g\circ f)\inv(z)}i_{g\circ f}(x)h(x)\\
		&=(g\circ f)_\#h(z)
	\end{align*}
	for every $z\in Z$.
\end{proof}

\subsubsection*{Push-forward of polylipschitz functions by BLD-maps} 
To simplify notation, we denote by $\overline x=(x_0,\ldots,x_k)$ a $(k+1)$-tuple of points in $X^{k+1}$. If $g:X\to \R$ is a function we define $\overline g:X^{k+1}\to\R$ by  $$\overline x\mapsto g(x_0)\cdots g(x_k).$$ For example, for the local index $i_f:X\to\Z$ of a BLD-map $f:X\to Y$, we denote $$\overline {i_f}(\overline x)=i_f(x_0)i_f(x_1)\cdots i_f(x_k)$$
for $\bar x=(x_0,\ldots,x_k)\in X^{k+1}$.

Let $f:X\to Y$ be a BLD-map between locally geodesic, oriented cohomology manifolds. Let $U\subset X$ be a normal domain for $f$. Given a normal domain $U\subset  X$ for $f$, consider the continuous $(k+1)$-linear linear map
\[
A_f^U:\LIP_\infty(U)^{k+1}\to \Poly^k(fU),\quad (\pi_0,\ldots,\pi_k)\mapsto ((f|_U)_{\#}\pi_0)\otimes \cdots \otimes ((f|_U)_{\#}\pi_k).
\]

\begin{definition}\label{polypush}
	Let $f:X\to Y$ be a BLD-map between locally geodesic, oriented cohomology manifolds, and let $U\subset X$ be a normal domain for $f$. The push-forward \[f_{U\#}:\Poly^k(U)\to\Poly^k(fU)\] is the unique continuous linear extension of $A_f^U$ for which (\ref{diagram:proj}) holds.
\end{definition}

By the linearity and the sequential continuity of $f_{U\#}$ we have that, if $\pi\in \Poly^k(U)$ and $(\pi_0^j,\ldots,\pi_k^j)\in\Rep(\pi)$, then
\begin{equation}\label{pushrep}
	f_{U\#}\pi=\sum_j^\infty((f|_U)_{\#}\pi_0^j)\otimes \cdots \otimes ((f|_U)_{\#}\pi_k^j).
\end{equation}

Denote $\deg(f|_U)=\mu_f(U)$ and let $L$ be the BLD-constant of $f$. Lemma \ref{locpush} and (\ref{pushrep}) immediately yield the estimates
\[
L_k(f_{U\#}\pi;fU)\le L^{k+1}\mu_f(U)^{k+1}\sum_j^\infty L(\pi^j_0|_{U})\cdots L(\pi^j_k|_{U}) 
\]
and
\[
\Lip_k(f_{U\#}\pi:fU)\le  L^{k}\mu_f(U)^{k+1}\sum_j^\infty\|\pi^j_0|_{U}\|_\infty\Lip(\pi^j_1|_{U}) \cdots \Lip(\pi^j_k|_{U})
\]
for every $\pi\in\Poly^k(U)$ and $(\pi_0^j,\ldots,\pi_k^j)\in\Rep(\pi)$. We obtain the following corollary.

\begin{corollary}\label{prest}
	Let $f:X\to Y$ an $L$-BLD-map between locally geodesic, oriented cohomology manifolds, and let $U\subset X$ be a normal domain for $f$. Then
	\begin{align*}
		&L_k(f_{U\#}\pi;fU)\le L^{k+1}\mu_f(U)^{k+1}L_k(\pi;U)
	\end{align*}
	and
	\begin{align*}
		\Lip_k(f_{U\#}\pi;fU)\le L^k\mu_f(U)^{k+1}\Lip_k(\pi;U).
	\end{align*}
\end{corollary}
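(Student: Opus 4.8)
The plan is to obtain both inequalities by passing to the infimum over tensor representations in the two displayed estimates that immediately precede the statement. Fix $\pi\in\Poly^k(U)$ and an arbitrary representation $(\pi_0^j,\ldots,\pi_k^j)\in\Rep(\pi)$. By \eqref{pushrep} the family $\big((f|_U)_\#\pi_0^j,\ldots,(f|_U)_\#\pi_k^j\big)_j$ is a representation of $f_{U\#}\pi\in\Poly^k(fU)$, so the defining infima of $L_k(f_{U\#}\pi;fU)$ and of $\Lip_k(f_{U\#}\pi;fU)$ are each bounded by the corresponding sum taken over this one representation.

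To control those sums I would invoke Lemma \ref{locpush} for the restriction $f|_U\colon U\to fU$, which is a proper $L$-BLD map of degree $\deg(f|_U)=\mu_f(U)$: for each factor it yields $\big\|(f|_U)_\#\pi_i^j|_{fU}\big\|_\infty\le\mu_f(U)\,\|\pi_i^j|_U\|_\infty$ and $\Lip\big((f|_U)_\#\pi_i^j|_{fU}\big)\le L\,\mu_f(U)\,\Lip(\pi_i^j|_U)$, hence, since $L\ge 1$, also $L\big((f|_U)_\#\pi_i^j|_{fU}\big)\le L\,\mu_f(U)\,L(\pi_i^j|_U)$. Multiplying these over $i=0,\ldots,k$ and summing over $j$ reproduces exactly the first displayed estimate; for the second one uses only the sup-bound on the zeroth factor and the Lipschitz bound on the remaining $k$ factors, which is precisely what saves the one power of $L$. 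Finally, taking the infimum over all representations $(\pi_0^j,\ldots,\pi_k^j)\in\Rep(\pi)$ and recognizing the right-hand sides as $L^{k+1}\mu_f(U)^{k+1}L_k(\pi;U)$ and $L^k\mu_f(U)^{k+1}\Lip_k(\pi;U)$ respectively completes the proof.

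The only step that deserves a word of justification — and the closest thing to a genuine obstacle — is checking that the hypotheses of Lemma \ref{locpush} are actually met by $f|_U$. Here one uses that $U$, being a normal domain, makes $f|_U\colon U\to fU$ a proper branched cover; that $f|_U$ inherits the $L$-BLD inequality \eqref{BLD} from $f$, since that inequality is tested only on paths in the source, all of which lie in $U$; that $U$ and $fU$ remain locally geodesic; and that the local degree of $f|_U$ is the constant $\mu_f(U)$. With these observations in hand the argument is a routine manipulation of infima over projective-tensor representations and involves no further difficulty.
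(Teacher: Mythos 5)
Your argument is correct and is essentially the paper's own: the paper derives the corollary by taking the infimum over representations in the two displayed estimates preceding it, which are obtained exactly as you do from \eqref{pushrep} together with the bounds $\|(f|_U)_\#\eta\|_\infty\le\mu_f(U)\|\eta\|_\infty$ and $\Lip((f|_U)_\#\eta)\le L\,\mu_f(U)\Lip(\eta)$ of Lemma \ref{locpush} applied to the proper $L$-BLD restriction $f|_U$. Your bookkeeping of the powers of $L$ (all $k+1$ factors for $L_k$, only the $k$ Lipschitz factors for $\Lip_k$) matches the paper's constants.
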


\begin{lemma}\label{restr}(Restriction principle)
	Let $f:X\to Y$ be a BLD-map between locally geodesic oriented cohomology manifolds, and $x\in X$. Then, if $U'\subset U\subset X$ are normal domains for $x$ and if $\pi\in \Poly^k(U)$, we have \[ f_{U\#}\pi(\overline z)=f_{U'\#}\pi(\overline z) \] for all $\overline z\in f(U')^{k+1}$.
\end{lemma}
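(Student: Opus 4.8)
The plan is to reduce the statement from general polylipschitz functions to elementary tensors, where it becomes an assertion purely about the push-forward of Lipschitz functions, and then invoke part (b) of Lemma \ref{spread} to localize that assertion. First I would fix $x\in X$ and normal domains $U'\subset U$ for $x$, and recall from part (b) of Lemma \ref{spread} that $U\cap f\inv(f(U'))=U'$. The crucial consequence is that for any point $z\in fU'$ and any $\eta\in\LIP_\infty(U)$, the preimage sums defining $(f|_U)_\#\eta$ and $(f|_{U'})_\#\eta$ at $z$ agree: indeed $f\inv(z)\cap U=f\inv(z)\cap U'$ since $z\in f(U')$ forces every $U$-preimage of $z$ to lie in $U'$, and the local index $i_f$ is intrinsic to $X$ (it does not depend on which normal domain one uses to compute it, by the standard property of the local degree recalled before \eqref{summation}). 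Hence $(f|_U)_\#\eta\,|_{fU'}=(f|_{U'})_\#(\eta|_{U'})$ as functions on $fU'$.

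Next I would push this through the tensor construction. For an elementary tensor $\pi=\pi_0\otimes\cdots\otimes\pi_k$ with $\pi_j\in\LIP_\infty(U)$, the definition of $A_f^U$ gives $f_{U\#}\pi=((f|_U)_\#\pi_0)\otimes\cdots\otimes((f|_U)_\#\pi_k)$, and restricting to $f(U')^{k+1}$ and applying the identity from the previous paragraph coordinatewise yields exactly $f_{U'\#}\pi(\overline z)$ for all $\overline z\in f(U')^{k+1}$. By bilinearity this extends to the algebraic tensor product $\LIP_\infty(U)^{\otimes(k+1)}$. For a general $\pi\in\Poly^k(U)$ one uses a representation $\pi=\sum_j \pi_0^j\otimes\cdots\otimes\pi_k^j$ with $\sum_j L(\pi_0^j)\cdots L(\pi_k^j)<\infty$ together with formula \eqref{pushrep}: both $f_{U\#}\pi$ and $f_{U'\#}(\pi|_{(U')^{k+1}})$ are then given by the corresponding absolutely convergent series $\sum_j ((f|_U)_\#\pi_0^j)\otimes\cdots$ and $\sum_j ((f|_{U'})_\#\pi_0^j|_{U'})\otimes\cdots$, which agree term by term on $f(U')^{k+1}$ by the elementary-tensor case; convergence of both series (uniform, hence pointwise) is guaranteed by Lemma \ref{locpush} and Corollary \ref{prest}. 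Evaluating at $\overline z\in f(U')^{k+1}$ gives the claim.

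The only genuine point requiring care — and the place I expect a referee to want detail — is the very first step: the identity $f\inv(z)\cap U=f\inv(z)\cap U'$ and the equality of local indices for $z\in fU'$. The set equality is Lemma \ref{spread}(b) applied with the roles of $U'$ and $U$; one should note $U'$ is a normal domain (being a normal neighborhood of $x$) and $z\in f(U')$, so $f\inv(f(U'))\cap U=U'$ gives $f\inv(z)\cap U\subset U'$, while the reverse inclusion is trivial. The index equality is immediate since $i_f(y)=\mu_f(W,f(y))$ for any normal neighborhood $W$ of $y$, independent of $W$, and this is the same number whether we view $y$ as a point of $U$ or of $U'$. Everything else is bookkeeping with the projective tensor product and the absolutely convergent series, which is routine given \eqref{pushrep} and Corollary \ref{prest}.
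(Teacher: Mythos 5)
Your proposal is correct and follows exactly the paper's route: the paper's proof also rests entirely on Lemma \ref{spread}(b) giving $U\cap f\inv(f(U'))=U'$, hence $U'\cap f\inv(z)=U\cap f\inv(z)$ for $z\in f(U')$, and then declares the claim "immediate." The additional bookkeeping you supply (index-independence, elementary tensors, passage to the completed tensor product via \eqref{pushrep}) is precisely what the paper leaves implicit, and it is all sound.
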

\begin{proof}
	By Lemma \ref{spread} (b), we have that $U\cap f\inv(f(U'))=U'$. Thus, for all $z\in f(U')$,
	\[
	U'\cap f\inv(z)=U\cap f\inv(z).
	\]
	The claim follows from this immediately.
\end{proof}

For the next three lemmas, we assume that $f:X\to Y$ is an $L$-BLD map between geodesic, oriented cohomology manifolds, $U\subset X$ is a normal domain for $f$, and that $k\ge 0$ is a fixed integer. We show that the push-forward commutes with the cup product and the exterior derivative. 

\begin{lemma}\label{precru}
	Given $\pi\in \Poly^k(U)$ and $\sigma\in \Poly^m(fU)$ we have 
	\[ 
	f_{U\#}(\pi\smile f^\#\sigma)=\mu_f(U)^m(f_{U\#}\pi)\smile \sigma. 
	\]
\end{lemma}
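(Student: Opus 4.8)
The plan is to prove the identity first on elementary tensors and then pass to the general case by continuity. Here $f^{\#}$ denotes the pull-back of polylipschitz functions along $f|_U$, i.e. $(f^{\#}\sigma)(x_0,\ldots,x_m)=\sigma(f(x_0),\ldots,f(x_m))$ for $x_0,\ldots,x_m\in U$. Since $X$ is geodesic, the upper bound in \eqref{BLD} gives $d(f(x),f(y))\le L\,d(x,y)$, so $f|_U$ is $L$-Lipschitz; consequently, on a representation $\sigma=\sum_j\sigma_0^j\otimes\cdots\otimes\sigma_m^j$ one has $f^{\#}\sigma=\sum_j(\sigma_0^j\circ f|_U)\otimes\cdots\otimes(\sigma_m^j\circ f|_U)$ with $L(\sigma_i^j\circ f|_U)\le\max(1,L)\,L(\sigma_i^j)$, so $f^{\#}\colon\Poly^m(fU)\to\Poly^m(U)$ is a bounded linear map. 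Together with the continuity of $f_{U\#}$ (Corollary~\ref{prest}) and the boundedness of the cup product $\Poly^k\times\Poly^m\to\Poly^{k+m}$, this shows that both sides of the asserted identity are bounded bilinear in $(\pi,\sigma)$; since the algebraic tensor products are dense in $\Poly^k(U)$ and $\Poly^m(fU)$, it suffices to treat $\pi=\pi_0\otimes\cdots\otimes\pi_k$ and $\sigma=\sigma_0\otimes\cdots\otimes\sigma_m$ with $\pi_i\in\LIP_\infty(U)$, $\sigma_j\in\LIP_\infty(fU)$.

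The content of the argument is a pair of \emph{projection formulas} for push-forward of functions. Because $U$ is a normal domain, $f|_U\colon U\to fU$ is proper with finite fibers, and moreover $f(\partial U)=\partial(fU)$ is disjoint from the open set $fU$; hence for every $y\in fU$ the fiber $f^{-1}(y)\cap U$ is finite and $\mu_f(U,y)$ is given by the summation formula \eqref{summation}. I would then record that, for every bounded Borel $g\colon U\to\R$ and every $h\in\LIP_\infty(fU)$,
\[
(f|_U)_\#\big(g\cdot(h\circ f|_U)\big)=h\cdot (f|_U)_\#g\qquad\text{and}\qquad (f|_U)_\#(h\circ f|_U)=\mu_f(U)\,h,
\]
both obtained by evaluating at $y\in fU$ and factoring the constant $h(y)$ out of $\sum_{x\in f^{-1}(y)\cap U}i_f(x)g(x)$; for the second identity $g\equiv 1$ and the residual sum equals $\mu_f(U,y)$, which is constant on the connected set $fU$ and equal to $\mu_f(U)=\deg(f|_U)$.

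Assembling these: by the cup product formula, for elementary tensors
\[
\pi\smile f^{\#}\sigma=\big(\pi_0\cdot(\sigma_0\circ f|_U)\big)\otimes\pi_1\otimes\cdots\otimes\pi_k\otimes(\sigma_1\circ f|_U)\otimes\cdots\otimes(\sigma_m\circ f|_U),
\]
so applying $f_{U\#}$ slotwise via \eqref{pushrep}, the first projection formula to the $0$-th slot, and the second to each of the last $m$ slots gives
\[
f_{U\#}(\pi\smile f^{\#}\sigma)=\mu_f(U)^m\,\big(\sigma_0\cdot(f|_U)_\#\pi_0\big)\otimes\big((f|_U)_\#\pi_1\big)\otimes\cdots\otimes\big((f|_U)_\#\pi_k\big)\otimes\sigma_1\otimes\cdots\otimes\sigma_m.
\]
On the other hand $f_{U\#}\pi=((f|_U)_\#\pi_0)\otimes\cdots\otimes((f|_U)_\#\pi_k)$, so $(f_{U\#}\pi)\smile\sigma$ equals $\mu_f(U)^{-m}$ times the right-hand side above, using commutativity of pointwise multiplication in the $0$-th slot; this is exactly the claimed identity, and the general case follows by the density argument of the first paragraph.

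I do not anticipate a genuine obstacle here: the only points demanding care are the boundedness of $f^{\#}$ and of $\smile$ used in the reduction to elementary tensors, and the fact that $\mu_f(U,\cdot)$ is constant and equal to $\mu_f(U)$ on \emph{all} of $fU$ rather than just off $f(\partial U)$ — for which the normal-domain hypothesis $\partial(fU)=f(\partial U)$ is essential, as it forces $fU\cap f(\partial U)=\varnothing$ so that \eqref{summation} applies at every point of $fU$.
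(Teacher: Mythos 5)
Your proof is correct and follows essentially the same route as the paper: establish the projection formula $(f|_U)_\#\bigl(g\cdot(h\circ f)\bigr)=h\cdot(f|_U)_\#g$ (with the special case $g\equiv 1$ yielding the factor $\mu_f(U)$), compute slotwise on elementary tensors via the cup-product formula, and extend by bilinearity and \eqref{pushrep}. The extra care you take with the boundedness of $f^\#$ and the constancy of $\mu_f(U,\cdot)$ on $fU$ is a sound elaboration of what the paper leaves implicit.
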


\begin{proof}
	We observe first that, given functions $g,h:U\to\R$ and $p\in fU$, we have
	\[
	(f|_U)_\#(g(h\circ f))(p)=\sum_{x\in U\cap f\inv (p)}i_f(x)g(x)h(f(x))=h(p)(f|_U)_\# g(p).
	\]
	Now let $\pi=\pi_0\otimes\cdots\otimes\pi_k\in \Poly^k(U)$ and $\sigma = \sigma_0\otimes\cdots\otimes\sigma_m\in \Poly^m(fU)$ be polylipschitz functions. Then
	\begin{align*}
		&f_{U\#}(\pi\smile f^\#\sigma)=f_{U\#}(\pi_0(\sigma_0\circ f)\otimes\pi_1\otimes\cdots\otimes\pi_k\otimes (\sigma_1\circ f)\otimes\cdots\otimes(\sigma_k\circ f))\\
		&=f_{U\#}(\pi_0(\sigma_0\circ f))\otimes f_{U\#}(\pi_1)\otimes \cdots\otimes f_{U\#}(\pi_k)f_{U\#}(\sigma_1\circ f)\otimes\cdots\otimes f_{U\#}(\sigma_k\circ f)\\
		&=(\sigma_0f_{U\#}\pi_0)\otimes f_{U\#}\pi_1\otimes\cdots\otimes f_{U\#}\pi_k\otimes (\mu_f(U)\sigma_1)\otimes\cdots\otimes(\mu_f(U)\sigma_m)\\
		&=\mu_f(U)^m(f_{U\#}\pi)\smile\sigma.
	\end{align*}
	Since the cup product is bi-linear and the pull-back is linear we have, by \eqref{pushrep}, that the claim holds  for all $\pi\in \Poly^k(U)$ and $\sigma\in \Poly^m(V)$.
\end{proof}

\begin{lemma}\label{prext}
	For each $\pi\in \Poly^k(U)$, we have 
	\[
	f_{U\#}(d\pi)=\mu_f(U)df_{U\#}\pi.
	\]
\end{lemma}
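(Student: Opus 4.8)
The plan is to reduce the identity $f_{U\#}(d\pi)=\mu_f(U)\,d f_{U\#}\pi$ to the special case of elementary tensors $\pi=\pi_0\otimes\cdots\otimes\pi_k$, and then verify it there by a direct pointwise computation on $f(U)^{k+2}$. By bilinearity of $d$ (it is linear) and the sequential continuity of both $f_{U\#}$ (Definition \ref{polypush}) and the exterior derivative $d\colon\Poly^k\to\Poly^{k+1}$, together with the representation formula \eqref{pushrep}, it suffices to prove the identity for a single elementary tensor; the general case then follows by summing over a representation $(\pi_0^j,\ldots,\pi_k^j)\in\Rep(\pi)$ and passing to the limit.

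For an elementary tensor, I would compute both sides as functions on $f(U)^{k+2}$. On the left, $d\pi$ is the Alexander--Spanier type alternating sum
\[
d\pi(x_0,\ldots,x_{k+1})=\sum_{j=0}^{k+1}(-1)^j\,\pi(x_0,\ldots,\widehat{x_j},\ldots,x_{k+1}),
\]
each summand being (up to omitting one slot) again an elementary tensor, so $f_{U\#}$ acts on it via the multilinear map $A_f^U$ of Definition \ref{polypush}, replacing each $\pi_i$ by $(f|_U)_\#\pi_i$. On the right, $d f_{U\#}\pi = d\big(((f|_U)_\#\pi_0)\otimes\cdots\otimes((f|_U)_\#\pi_k)\big)$ is the same alternating sum applied to the pushed-forward tensor. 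The key bookkeeping point is that the two constructions — ``push forward each factor, then take the Alexander--Spanier differential'' versus ``take the differential, then push forward'' — agree term by term, except that in the $j=0$ term of $d\pi$ the function $\pi_0$ has been deleted and so the slot that in $f_{U\#}(d\pi)$ would carry $(f|_U)_\#(\text{constant }1)$ instead carries nothing; and $(f|_U)_\#1\equiv \mu_f(U)$ on $f(U)$ by the summation formula \eqref{summation}. Tracking this discrepancy across all $k+2$ terms produces exactly the scalar factor $\mu_f(U)$, which is why the identity reads $f_{U\#}(d\pi)=\mu_f(U)\,d f_{U\#}\pi$ rather than with no constant. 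I would carry this out by writing both alternating sums explicitly and matching indices; the argument is essentially the one already rehearsed in the proof of Lemma \ref{precru}, where the same ``$(f|_U)_\#(g\cdot(h\circ f))=h\cdot(f|_U)_\#g$'' and ``$(f|_U)_\#1=\mu_f(U)$'' facts drove the computation.

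The main obstacle is purely combinatorial rather than conceptual: one must be careful about which argument slot of the $(k+2)$-fold tensor the ``missing'' factor occupies in each term of the alternating sum, so that the claimed factor $\mu_f(U)$ comes out with the correct sign and does not get over- or under-counted. Concretely, the cleanest route is to first record the pointwise formula
\[
(f|_U)_\#\pi_i(p)=\sum_{x\in U\cap f\inv(p)} i_f(x)\,\pi_i(x),\qquad (f|_U)_\#\mathbf 1(p)=\mu_f(U)\quad\text{for }p\in f(U),
\]
then expand $f_{U\#}(d\pi)(\overline z)$ for $\overline z=(z_0,\ldots,z_{k+1})\in f(U)^{k+2}$ using $A_f^U$ on each of the $k+2$ terms, factor out $\mu_f(U)$ from the single degenerate term, and recognize the remaining sum as $\mu_f(U)\,d\big((f|_U)_\#\pi_0\otimes\cdots\otimes(f|_U)_\#\pi_k\big)(\overline z)$. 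Finally I would invoke linearity and sequential continuity of $f_{U\#}$ and $d$ once more, via \eqref{pushrep}, to extend from elementary tensors to all of $\Poly^k(U)$, completing the proof.
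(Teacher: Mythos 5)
Your proposal is correct and follows essentially the same route as the paper: reduce to elementary tensors via \eqref{pushrep} and continuity, rewrite each term of the Alexander--Spanier sum $d\pi$ as an elementary $(k+2)$-tensor with the constant function $1$ inserted in one slot, push forward factor by factor, and use $(f|_U)_\#1\equiv\mu_f(U)$ (from the summation formula \eqref{summation}) to extract the factor $\mu_f(U)$ from each of the $k+2$ terms. The only quibble is the phrase ``the function $\pi_0$ has been deleted'' in the $j=0$ term --- it is the variable $x_0$ that is omitted, so the constant $1$ occupies slot $0$ and the $\pi_i$ shift --- but your subsequent bookkeeping treats this correctly.
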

\begin{proof}
	As before, it suffices to consider the case $\pi=\pi_0\otimes\cdots\otimes\pi_k\in\Poly^k(U)$. Then
	\begin{align*}
		f_{U\#}(d\pi)
		=&\sum_{l=0}^{k+1} (-1)^lf_{U\#}(\pi_0\otimes\cdots\otimes \pi_{l-1}\otimes 1\otimes \pi_l\otimes\cdots\otimes\pi_k)\\
		=&\sum_{l=0}^{k+1}(-1)^lf_{U\#} \left( \pi_0\otimes\cdots\otimes \pi_{l-1}\right) \otimes \mu_f(U)\otimes f_{U\#}\left( \pi_l\otimes\cdots\otimes \pi_k\right)\\
		=&\mu_f(U)df_{U\#}\pi.
	\end{align*}
\end{proof}

The following lemma shows that the push-forward is sequentially continuous.

\begin{lemma}\label{precont}
	Suppose $\pi^n\to \pi$ in $\Poly^k(U)$. Then $f_{U\#}\pi^n\to f_{U\#}\pi$ in $\Poly^k(V)$.
\end{lemma}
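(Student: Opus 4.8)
The plan is to reduce the statement to the definition of convergence in the projective tensor product and exploit the continuity bounds already recorded in Corollary \ref{prest}. Recall from \cite[Section 4.2]{teripekka1} that convergence $\pi^n\to\pi$ in $\Poly^k(U)$ means, after passing to suitable representations, that one can write $\pi^n-\pi=\sum_j (\pi^{n,j}_0\otimes\cdots\otimes\pi^{n,j}_k)$ with $\sum_j L(\pi^{n,j}_0|_U)\cdots L(\pi^{n,j}_k|_U)\to 0$ as $n\to\infty$; equivalently, $L_k(\pi^n-\pi;U)\to 0$. Since $f_{U\#}$ is linear, $f_{U\#}\pi^n-f_{U\#}\pi=f_{U\#}(\pi^n-\pi)$, so it suffices to show $L_k(f_{U\#}(\pi^n-\pi);fU)\to 0$, which is precisely what the first inequality of Corollary \ref{prest} delivers:
\[
L_k\bigl(f_{U\#}(\pi^n-\pi);fU\bigr)\le L^{k+1}\mu_f(U)^{k+1}L_k(\pi^n-\pi;U)\longrightarrow 0.
\]

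In more detail: first I would fix the normal domain $U$ and note that $\mu_f(U)=\deg(f|_U)$ is a finite integer, so $L^{k+1}\mu_f(U)^{k+1}$ is a finite constant depending only on $L$, $k$, and $U$. Next I would recall that the topology on $\Poly^k(U)$ is the norm topology induced by the projective tensor norm $L_k(\cdot;U)$ (this is the content of the identification of polylipschitz functions with the completed projective tensor product), so that $\pi^n\to\pi$ in $\Poly^k(U)$ is equivalent to $L_k(\pi^n-\pi;U)\to 0$. Then, applying Corollary \ref{prest} to the element $\pi^n-\pi\in\Poly^k(U)$ and using linearity of $f_{U\#}$, I obtain the displayed bound, from which $f_{U\#}\pi^n\to f_{U\#}\pi$ in $\Poly^k(fU)$ follows immediately. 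One should also note $V=fU$ here (the statement writes $\Poly^k(V)$, but $V$ is the image $fU$ of the normal domain, consistent with the earlier lemmas in this block).

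I do not expect a genuine obstacle: the work has already been done in establishing Corollary \ref{prest}, and the only subtlety is bookkeeping — making sure that the notion of convergence used in $\Poly^k(U)$ is exactly convergence in the projective tensor norm, so that the operator-norm bound from Corollary \ref{prest} applies verbatim. If the paper's convergence notion in \cite[Section 4.2]{teripekka1} is stated in terms of representations rather than the norm directly, I would simply spell out the equivalence in one line, choosing for each $n$ a representation of $\pi^n-\pi$ that nearly realizes $L_k(\pi^n-\pi;U)$ and pushing it forward term by term via \eqref{pushrep}. Either way the proof is two or three lines once the definitions are unwound.
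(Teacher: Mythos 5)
There is a genuine gap here, and it lies exactly in the step you flagged as ``bookkeeping'': the convergence $\pi^n\to\pi$ in $\Poly^k(U)$ is \emph{not} convergence in the projective tensor norm $L_k(\cdot\,;U)$. As the paper uses it (following \cite[Section 4.2]{teripekka1}), $\pi^n\to\pi$ means that one can choose representations $\pi^n-\pi=\sum_j\pi_0^{j,n}\otimes\cdots\otimes\pi_k^{j,n}$ with $\sup_n\sum_j L(\pi_0^{j,n}|_U)\cdots L(\pi_k^{j,n}|_U)<\infty$ and $\sum_j\|\pi_0^{j,n}|_U\|_\infty\cdots\|\pi_k^{j,n}|_U\|_\infty\to 0$. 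This is the tensor-product analogue of the convergence in $\LIP_\infty(X)$ (pointwise/uniform convergence with uniformly bounded Lipschitz constants), and it is strictly weaker than $L_k(\pi^n-\pi;U)\to 0$: the Lipschitz constants in the representations need not decay at all. Your argument establishes norm-continuity of $f_{U\#}$, which is a true statement but not the one being claimed, and it is the weaker convergence that is actually needed downstream --- Lemma \ref{precont} feeds into Proposition \ref{pushcont}, which must match the weak continuity built into the definition of a metric current. So the hedge at the end of your proposal (``spell out the equivalence in one line'') cannot be executed, because the two notions are not equivalent.

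The repair is short but requires tracking two quantities instead of one. Take a representation of $\pi^n-\pi$ witnessing the convergence, push it forward term by term via \eqref{pushrep}, and apply \emph{both} estimates of Lemma \ref{locpush} to each factor: the bound $\Lip\bigl((f|_U)_\#\eta\bigr)\le L\,\mu_f(U)\Lip(\eta)$ together with $\|(f|_U)_\#\eta\|_\infty\le\mu_f(U)\|\eta\|_\infty$ gives
\[
\sup_n\sum_j L\bigl(f_{U\#}\pi_0^{j,n}|_V\bigr)\cdots L\bigl(f_{U\#}\pi_k^{j,n}|_V\bigr)\le L^{k+1}\mu_f(U)^{k+1}\sup_n\sum_j L(\pi_0^{j,n}|_U)\cdots L(\pi_k^{j,n}|_U)<\infty,
\]
while the sup-norm estimate alone gives
\[
\sum_j\bigl\|f_{U\#}\pi_0^{j,n}|_V\bigr\|_\infty\cdots\bigl\|f_{U\#}\pi_k^{j,n}|_V\bigr\|_\infty\le\mu_f(U)^{k+1}\sum_j\|\pi_0^{j,n}|_U\|_\infty\cdots\|\pi_k^{j,n}|_U\|_\infty\to 0.
\]
The pushed-forward representation therefore witnesses $f_{U\#}\pi^n\to f_{U\#}\pi$ in $\Poly^k(V)$. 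This is what the paper does; your appeal to Corollary \ref{prest} only captures the first of the two displays.
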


\begin{proof}
	Since $\pi^n\to \pi$ in $\Poly^k(U)$ there is, for  every $n\in\N$, a representation
	\begin{align*}
		\pi^n-\pi=\sum_j^\infty\pi_0^{j,n}\otimes\cdots\otimes\pi_k^{j,n}
	\end{align*}
	of $\pi^n-\pi$ satisfying 
	\begin{align*}
		\sup_{n\in\N} \sum_j^\infty L(\pi_0^{j,n}|_U)\cdots L(\pi_k^{j,n}|_U)<\infty
		\ \text{ and }
		\lim_{n\to\infty}\sum_j^\infty\|\pi_0^{j,n}|_U\|_\infty\cdots\|\pi_k^{j,n}|_U\|_\infty= 0.
	\end{align*}
	Since
	\begin{align*}
		f_{U\#}\pi_n-f_{U\#}=f_{U\#}(\pi^n-\pi)=\sum_j(f_{U\#}\pi_0^{j,n})\otimes\cdots\otimes (f_{U\#}\pi_k^{j,n}),
	\end{align*}
	we have, by the estimates in Lemma \ref{locpush}, that
	\begin{align*}
		\sup_{n\in\N} & \sum_j^\infty L(f_{U\#}\pi_0^{j,n}|_V)\cdots L(f_{U\#}\pi_k^{j,n}|_V) \\
		\le& L^{k+1}\mu_f(U)^{k+1}\sup_{n\in\N}\sum_j^\infty L(\pi_0^{j,n}|_U)\cdots L(\pi_k^{j,n}|_U)<\infty
	\end{align*}
	and
	\begin{align*}
		\sum_j^\infty&\|f_{U\#}\pi_0^{j,n}|_V\|_\infty\cdots\|f_{U\#}\pi_k^{j,n}|_V\|_\infty 
		\le \mu_f(U)^{k+1}\sum_j^\infty \|\pi_0^{j,n}|_U\|_\infty\cdots\|\pi_k^{j,n}|_U\|_\infty\to 0
	\end{align*}
	as $n\to\infty.$ Thus $f_{U\#}\pi^n\to f_{U\#}\pi$ in $\Poly^k(V)$.
\end{proof}

Finally, we show that the push-forward is natural in the sense that the composition of push-forwards is the push-forward of compositions

\begin{lemma}\label{prenat}
	Let $f:X\to Y$ and $g:Y\to Z$ be BLD-maps between locally geodesic oriented cohomology manifolds. Let $U\subset X$ be a normal domain for $f$ and $V\subset f(U)$ a normal domain for $g$. Set $W=g(V)$ and $U'\subset f\inv(V)\cap U$ a component of $f\inv(V)\cap U$. Then \[ g_{V\#}\circ f_{U'\#}\pi=(g\circ f)_{U'\#}\pi \] for every $\pi\in \Poly^k(U')$.
\end{lemma}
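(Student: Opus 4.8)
The plan is to reduce to the case of a rank-one elementary tensor $\pi=\pi_0\otimes\cdots\otimes\pi_k$ with $\pi_i\in\LIP_\infty(U')$, and then verify the identity pointwise on $W^{k+1}$. Both $g_{V\#}\circ f_{U'\#}$ and $(g\circ f)_{U'\#}$ are continuous linear maps $\Poly^k(U')\to\Poly^k(W)$ — the first as a composition of the continuous linear maps of Definition \ref{polypush} (note that $f_{U'\#}$ lands in $\Poly^k(f(U'))$ and $f(U')\subset V$, so $g_{V\#}$ applies after the restriction map, which is also continuous), the second directly by Definition \ref{polypush}. Hence, by the universal property \eqref{diagram:proj} and the representation formula \eqref{pushrep}, it suffices to check that the two maps agree on $\jmath(\LIP_\infty(U')^{k+1})$, i.e.\ on elementary tensors.

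For an elementary tensor, \eqref{pushrep} gives
\[
g_{V\#}f_{U'\#}(\pi_0\otimes\cdots\otimes\pi_k)
= \big((g|_V)_\#(f|_{U'})_\#\pi_0\big)\otimes\cdots\otimes\big((g|_V)_\#(f|_{U'})_\#\pi_k\big),
\]
and likewise $(g\circ f)_{U'\#}(\pi_0\otimes\cdots\otimes\pi_k)=\big(((g\circ f)|_{U'})_\#\pi_0\big)\otimes\cdots\otimes\big(((g\circ f)|_{U'})_\#\pi_k\big)$. So the lemma reduces to the scalar identity
\[
(g|_V)_\#\big((f|_{U'})_\#h\big)(z)=\big((g\circ f)|_{U'}\big)_\#h(z)\quad\text{for all }z\in W,\ h\in\LIP_\infty(U'),
\]
which is exactly the computation in Lemma \ref{compopush}, carried out here for the restricted maps $f|_{U'}\colon U'\to V$ and $g|_V\colon V\to W$. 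I would repeat that computation in this local setting: for $z\in W=g(V)$, one has $((g\circ f)|_{U'})^{-1}(z)=\bigcup_{y\in (g|_V)^{-1}(z)} (f|_{U'})^{-1}(y)$ as a disjoint union, and the index chain rule \eqref{eq:indexcomp} applied to the restricted maps (restrictions to normal domains do not change the local index, by the definition of $i_f$ via normal neighborhoods and the excision property of the local degree $\mu_f(\cdot,y)$) gives $i_{(g\circ f)|_{U'}}(x)=i_{g|_V}(f(x))\,i_{f|_{U'}}(x)$. Summing $i_{g|_V}(y)\,i_{f|_{U'}}(x)\,h(x)$ first over $x\in (f|_{U'})^{-1}(y)$ and then over $y\in (g|_V)^{-1}(z)$ yields the claim.

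The main obstacle is bookkeeping rather than a genuine difficulty: one must make sure that $f(U')\subset V$ so that $g_{V\#}$ can indeed be applied to $f_{U'\#}\pi$ (this is built into the hypothesis $U'\subset f^{-1}(V)\cap U$, since $U'$ is a \emph{component} of $f^{-1}(V)\cap U$ and hence $f(U')\subset V$), that $U'$ is still a normal domain for $f$ (it is, being an open connected set of the form $U\cap f^{-1}(f(U'))$ mapping properly onto $f(U')$, cf.\ Lemma \ref{spread}), and that $V$ being a normal domain for $g$ makes $g|_V\colon V\to W$ proper so that $(g|_V)_\#$ and hence $g_{V\#}$ are defined exactly as in Definition \ref{polypush}. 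The one point that deserves an explicit sentence is the invariance of the local index under passing to normal subdomains, which follows from the excision property $\mu_f(V',y)=\mu_f(U,y)$ for nested normal domains recalled in Section \ref{sec: cohomology manifolds}. Once these compatibilities are recorded, the proof is the two-line reduction to elementary tensors followed by the double-sum rearrangement of Lemma \ref{compopush}.
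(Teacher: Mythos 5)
Your proposal is correct and follows essentially the same route as the paper: reduce to elementary tensors via \eqref{pushrep}, apply the scalar composition identity of Lemma \ref{compopush} to each factor for the restricted maps $f|_{U'}$ and $g|_V$, and extend by linearity and continuity. The extra bookkeeping you record (that $f(U')\subset V$, that $U'$ is a normal domain for $f$, and that the local index is unchanged on normal subdomains) is sound and only makes explicit what the paper's proof leaves implicit when it invokes Lemma \ref{compopush} for the restrictions.
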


\begin{proof}
	We observe first that $U'$ is a normal domain for $f$. Let $\pi=\pi_0\otimes\cdots\otimes\pi_k\in \Poly^k(U')$. By Lemma \ref{compopush}, we have 
	\[
	(g|_{V})_\#(f|_{U'})_\#\pi_j=\big((g\circ f)|_{U'}\big)_\#\pi_j  
	\] 
	on $W$, for each $j=0,\ldots,k$. Thus
	\begin{align*}
		(g_{V\#}\circ f_{U'\#})\pi=&g_{V\#}((f|_{U'})_\#\pi_0\otimes\cdots\otimes(f|_{U'})_\#\pi_k)\\
		=&((g|_{V})_\#(f|_{U'})_\#\pi_)\otimes\cdots\otimes((g|_{V})_\#(f|_{U'})_\#\pi_k)\\
		=&((g\circ f)|_{U'})_\#\pi_0\otimes\cdots\otimes((g\circ f)|_{U'})_\#\pi_k
		=(g\circ f)_{U'\#}\pi.
	\end{align*}
	By (\ref{pushrep}), equality holds for all $\pi\in \Poly^k(U')$.
\end{proof}

\subsection{Push-forward of polylipschitz forms}
\label{sec:push-forward_forms}

Let $f:X\to Y$ be a BLD-map between locally geodesic oriented cohomology manifolds $X$ and $Y$. We show that the push-forwards $f_{U\#}:\Poly^k(U)\to \Poly^k(f(U))$, where $U\subset X$ is a normal domain for $f$, induce a map $\Polys^k(X)\to \Polys^k(Y)$.
\begin{lemma}\label{wellposed}
	Let $f:X\to Y$ be a BLD-map between locally geodesic oriented cohomology manifolds, and let $x\in X$. Let  $U$ and $U'$ be normal neighborhoods of $x$, and let $\pi\in \Poly^k(U)$, $\pi'\in \Poly^k(U')$  be polylipschitz functions satisfying $[\pi]_x=[\pi']_x$. Then \[[f_{U\#}\pi]_{f(x)}= [f_{U'\#}\pi']_{f(x)}. \]
\end{lemma}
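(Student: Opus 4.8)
The plan is to reduce both push-forwards to a single common normal neighborhood of $x$ sitting inside the set where $\pi$ and $\pi'$ already agree, apply the restriction principle there, and then read off the equality of germs at $f(x)$ from the openness of $f$.

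First I would use the hypothesis $[\pi]_x=[\pi']_x$ to fix an open neighborhood $W\subset U\cap U'$ of $x$ on which $\pi$ and $\pi'$ restrict to the same element of $\Poly^k(W)$, i.e. $\rho_{W,U}\pi=\rho_{W,U'}\pi'$. The key step is then to produce a \emph{normal} neighborhood $U''$ of $x$ with $U''\subset W$. By Lemma \ref{spread}(a) there is $r_x>0$ so that $U_f(x,r)$ is a normal neighborhood of $x$ for all $r<r_x$, and I claim these shrink to $x$ as $r\to0$. Indeed, fix a geodesic neighborhood of $f(x)$ in $Y$ and take $r<r_x$ so small that $B(f(x),r)$ lies inside it. For $y\in U_f(x,r)$ there is a geodesic $\beta$ in $Y$ from $f(y)$ to $f(x)$ of length $d(f(y),f(x))<r$; since the distance to $f(x)$ decreases along $\beta$, it stays in $B(f(x),r)$. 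Because $f$ restricted to the normal domain $U_f(x,r)$ is proper, $\beta$ has a total lift $\widetilde\beta$ starting at $y$, which stays in $U_f(x,r)$ (its $f$-image is in $B(f(x),r)$ and it is connected) and hence ends at the unique point of $f^{-1}(f(x))\cap U_f(x,r)=\{x\}$. The lower bound in \eqref{BLD} gives $d(x,y)\le\ell(\widetilde\beta)\le L\,\ell(\beta)<Lr$, so $U_f(x,r)\subset B(x,Lr)$. Choosing $r<r_x$ with $B(x,Lr)\subset W$ produces $U'':=U_f(x,r)\subset W\subset U\cap U'$, a normal neighborhood of $x$.

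With $U''$ in hand the rest is formal. Since $U''\subset U$ and $U''\subset U'$ are nested normal neighborhoods of $x$, the restriction principle (Lemma \ref{restr}) yields $f_{U\#}\pi(\overline z)=f_{U''\#}\pi(\overline z)$ and $f_{U'\#}\pi'(\overline z)=f_{U''\#}\pi'(\overline z)$ for all $\overline z\in f(U'')^{k+1}$, the polylipschitz functions on the right being tacitly restricted to $(U'')^{k+1}$. As $U''\subset W$, the restrictions of $\pi$ and $\pi'$ to $(U'')^{k+1}$ agree in $\Poly^k(U'')$, and $f_{U''\#}$ depends only on this restriction, so $f_{U\#}\pi$ and $f_{U'\#}\pi'$ coincide on $f(U'')^{k+1}$. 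Finally, $f$ being open, $f(U'')$ is an open neighborhood of $f(x)$, hence $f(U'')^{k+1}$ is an open neighborhood of $(f(x),\ldots,f(x))$ in $Y^{k+1}$ on which the two push-forwards agree; by definition of the germ this gives $[f_{U\#}\pi]_{f(x)}=[f_{U'\#}\pi']_{f(x)}$.

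The only genuinely non-formal point---and the one I would spend the most care on---is the claim that the normal neighborhoods $U_f(x,r)$ form a neighborhood basis at $x$; this is where the metric hypotheses enter, via total path-lifting into normal domains together with the lower length-distortion estimate in \eqref{BLD}. (Alternatively one can obtain the same fact by a purely topological nested-compactness argument, using that $\overline{U_f(x,r)}\subset f^{-1}(\overline{B(f(x),r)})$ and that $\bigcap_{r>0}f^{-1}(\overline{B(f(x),r)})=f^{-1}(f(x))$ is discrete.) Everything downstream of that is bookkeeping with the restriction principle and the openness of $f$.
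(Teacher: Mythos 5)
Your proof is correct and follows essentially the same route as the paper's: both arguments reduce to a small normal neighborhood $U(x,\rho)$ of $x$ contained in the set where $\pi$ and $\pi'$ agree, and then observe (you via the restriction principle, Lemma \ref{restr}; the paper directly via Lemma \ref{spread}(b) and the summation formula) that the fibers of $f$ over points near $f(x)$ are the same whether computed in $U$, $U'$, or $U(x,\rho)$, so the push-forwards coincide on a neighborhood of $f(x)$. The one place you add genuine content is the path-lifting argument showing that the normal neighborhoods $U_f(x,r)$ form a neighborhood basis at $x$ --- a fact the paper's proof asserts without justification --- and that argument is sound.
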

\begin{proof}
	We may assume $U'\subset U$. Since $[\pi]_x=[\pi']_x$, there exists $\rho>0$, for which $U(x,\rho)\subset U'$ and 
	\[
	\pi|_{U(x,\rho)^{k+1}}\equiv \pi'|_{U(x,\rho)^{k+1}}. 
	\]
	
	Since $U(x,\rho)$ is a normal neighborhood of $x$ we have, by the summation formula of the local index (\ref{summation}) that, for every $q\in B_\rho(p)$, \[U\cap f\inv(q)=U'\cap f\inv(q)=U(x,\rho)\cap f\inv(q).\]  Thus \[ f_{U\#}\pi|_{B_\rho(p)^{k+1}}= f_{U'\#}\pi'|_{B_\rho(p)^{k+1}},\]
	and
	\[
	[f_{U\#}\pi]_{f(x)}=[f_{U'\#}\pi']_{f(x)}.
	\]
	The claim follows.
\end{proof}
\begin{definition}\label{af}
	Let $f:X\to Y$ be a BLD-map between locally geodesic oriented cohomology manifolds. The \emph{local averaging map} $A_f:\Polys^k(X)\to \Polys^k(Y)$ is the map
	\[
	[\pi]_x\mapsto \frac{1}{i_f(x)^{k+1}}[f_{U_x\#}\pi]_{f(x)}
	\]
	where, for each $x\in X$, $U_x$ is a normal neighborhood of $x$.
\end{definition}
By Lemma \ref{wellposed}, the local averaging map $A_f:\Polys^k(X)\to \Polys^k(Y)$ is well-defined. Moreover, for each $x\in X$,
\[
A_f:\Polys_x^k(X)\to\Polys_{f(x)}^k(Y).
\]
\begin{remark}\label{linearity}
	For each $x\in X$, the stalks $\Polys^k_x(X)$ and $\Polys^k_{f(x)}(Y)$ are vector spaces. We have, by the linearity of $f_{U\#}$ that, for $[\pi]_x,[\pi']_x\in \Polys^k_x(X)$,
	\[
	A_f([\pi]_x+[\pi']_x)=A_f([\pi]_x)+A_f([\pi']_x).
	\]
\end{remark}
\begin{definition}\label{push}
	Let $\omega\in \G_c^k(X)$. The push-forward $f_\#\omega\in \G^k_c(Y)$ is the section 
	\[
	f_\#\omega:Y\to\Polys^k(Y),\quad y\mapsto \sum_{x\in f\inv(y)}i_f(x)A_f(\omega(x)). 
	\]
	
\end{definition}
Note that, since $\omega\in \G^k_c(X)$ has compact support, the sum in Definition \ref{push} has only finitely many nonzero summands.

Let $\omega\in\G^k_c(X)$ and $y\in Y$. The value of the push-forward $f_\#\omega$ at $y$ can be given as follows. Let $r>0$ be a radius with the property that  $B_r(y)$ is a geodesic spread neighborhood with respect to $\spt\omega$; cf. Lemma \ref{spread}. For each $x\in f\inv(y)\cap \spt\omega$, let $\pi_x\in \Poly^k(U(x,r))$ satisfy $[\pi_x]_x=\omega(x)$. Then 
\[ 
f_\#\omega(y)=[(A_f^r\omega)_y]_y, \textrm{ where }\
(A_f^r\omega)_y=\sum_{x\in f\inv(y)\cap \spt\omega}\frac{1}{i_f(x)^k}f_{U(x,r)\#}\pi_x\in \Poly^k(B_r(y)).
\]
Indeed, it suffices to note that
\[
\begin{split}
[(A_f^r\omega)_y]_y=&\sum_{x\in f\inv(y)\cap \spt\omega}\frac{1}{i_f(x)^k}[f_{U(x,r)\#}\pi_x]_y \\
=&\sum_{x\in f\inv(y)\cap \spt\omega} i_f(x)A_f([\pi_x]_x)=\sum_{x\in f\inv(y)}i_f(x)A_f(\omega(x))
=f_\#\omega(y).
\end{split}
\]
We use this fact in the sequel.


The next proposition lists the basic properties of the push-forward.

\begin{proposition}\label{basicpush} Let $f:X\to Y$ be an $L$-BLD map. The pushforward $\omega\mapsto f_\#\omega$ is a linear map $f_\#:\G_c^k(X)\to \G_c^k(Y)$ satisfying, for each $\omega\in \G_c^k(X)$, the following properties:
	\begin{itemize}
		\item[(1)] $\spt(f_\#\omega)\subset f(\spt\omega)$,
		\item[(2)] $\displaystyle L_k(f_\#\omega)\le L^{k+1}f_\# L_k(\omega)$ and $\displaystyle \|f_\#\omega\|\le L^kf_\#\|\omega\|$ pointwise on $Y$, 
		\item[(4)] $f_\#(d\omega)=df_\#\omega$, and
		\item[(3)] $f_\#(\alpha\smile f^\#\beta)=f_\#\alpha\smile\beta$ for $\alpha\in \G_c^k(X)$ and $\beta\in \G^m(Y)$.		
	\end{itemize}
\end{proposition}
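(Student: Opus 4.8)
The strategy is to reduce every assertion to the corresponding local statements about the maps $f_{U\#}$ that were established in Lemmas \ref{precru}, \ref{prext}, \ref{precont}, \ref{restr}, and to the pointwise estimates of Corollary \ref{prest} and Lemma \ref{locpush}. The key observation is that near a fixed $y\in Y$, the push-forward $f_\#\omega$ is represented by the explicit polylipschitz function $(A_f^r\omega)_y = \sum_{x\in f\inv(y)\cap\spt\omega} i_f(x)^{-k} f_{U(x,r)\#}\pi_x$ on a geodesic spread neighborhood $B_r(y)$, as recorded just before the statement. Thus the $\mathcal E$-continuity, support, derivative, and cup-product claims can all be checked on such a neighborhood, provided the local representatives glue correctly; this gluing is exactly the content of the restriction principle (Lemma \ref{restr}), which guarantees that shrinking $r$ does not change the germ, together with Lemma \ref{wellposed}, which makes $A_f$ well defined independently of the choice of normal neighborhood.

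\textbf{Linearity and continuity of the section.} Linearity of $f_\#$ is immediate from Remark \ref{linearity} (linearity of $A_f$ on stalks) and the linearity of the finite sum in Definition \ref{push}. To see that $f_\#\omega$ is genuinely a polylipschitz \emph{form}, i.e.\ a continuous section, I would fix a locally finite open cover $\mathscr U$ of $X$ with representatives $\pi_U\in\Poly^k(U)$ of $\omega$ on each $U$, satisfying the overlap condition \eqref{overlap}; then for each $y\in Y$ pick $r=r_y$ so that $B_r(y)$ is a geodesic spread neighborhood with respect to $\spt\omega$, with each $U(x,r)\subset$ some member of $\mathscr U$. The functions $(A_f^r\omega)_y\in\Poly^k(B_r(y))$ then form a locally finite family of representatives of $f_\#\omega$; the overlap condition for these follows from Lemma \ref{restr} applied to each preimage point together with the overlap condition upstairs and Lemma \ref{spread}(b). (Strictly this gives a partition-continuous section when one is careful about the loci where preimage points appear or disappear; this is why the target in the main text is $\Gamma^k_{\mathrm{pc},c}(Y)$, but for the statement at hand it suffices to exhibit the local representatives.) Property (1), $\spt(f_\#\omega)\subset f(\spt\omega)$, is then clear: if $y\notin f(\spt\omega)$ then for small $r$ the sum $(A_f^r\omega)_y$ is empty, so $f_\#\omega$ vanishes in a neighborhood of $y$, and since $f(\spt\omega)$ is compact (as $\spt\omega$ is compact and $f$ continuous) the support of $f_\#\omega$ is compact, giving $f_\#\omega\in\G^k_c(Y)$.

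\textbf{Estimates and compatibility with $d$ and $\smile$.} For (2), evaluate at $y\in Y$: using the representative $(A_f^r\omega)_y$ and the triangle inequality for $L_k(\cdot;B_r(y))$ and $\Lip_k(\cdot;B_r(y))$, apply Corollary \ref{prest} termwise with $\mu_f(U(x,r))=i_f(x)$, so the factor $i_f(x)^{-k}$ from the definition cancels against $\mu_f(U)^{k+1}=i_f(x)^{k+1}$ up to one surviving $i_f(x)$, yielding $\|f_\#\omega\|_y\le \sum_{x\in f\inv(y)} i_f(x) L^k\|\omega\|_x = L^k f_\#\|\omega\|(y)$ after passing to the limit $r\to0$, and similarly for $L_k$ with $L^{k+1}$; upper semicontinuity of the norms and the definition of $\|\cdot\|_x$ as $\lim_{r\to0}$ handle the limit. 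For (4), $f_\#(d\omega)=df_\#\omega$ follows by applying Lemma \ref{prext} to each representative $f_{U(x,r)\#}\pi_x$: the factor $\mu_f(U)=i_f(x)$ produced there is absorbed because $d\omega$'s representative carries the weight $i_f(x)^{-(k+1)}$ while $df_\#\omega$'s carries $i_f(x)^{-k}$, so the powers match; here one also uses that $d$ commutes with the presheaf restriction maps and hence with germ formation. For (3), $f_\#(\alpha\smile f^\#\beta)=f_\#\alpha\smile\beta$, apply Lemma \ref{precru} to each term, where the factor $\mu_f(U)^m=i_f(x)^m$ from that lemma is again absorbed by the difference of degree-weights ($\alpha\smile f^\#\beta$ has degree $k+m$ so weight $i_f(x)^{-(k+m)}$, while $f_\#\alpha$ has weight $i_f(x)^{-k}$, leaving $i_f(x)^{-m}$ to cancel).

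\textbf{Main obstacle.} The routine parts are the bilinearity/linearity bookkeeping and the power-of-$i_f$ cancellations. The genuinely delicate point is verifying that $f_\#\omega$ is a (partition-)continuous section — i.e.\ that the locally defined representatives $(A_f^r\omega)_y$ can be chosen compatibly over a cover of $Y$, including near points of $fB_f$ and near the boundary of $f(\spt\omega)$ where the number of preimage points contributing to the sum jumps. Controlling this requires combining the normal-domain structure of Lemma \ref{spread}, the restriction principle Lemma \ref{restr}, and the bijection of preimages away from $fB_f$ (Lemma \ref{bijection}); the cleanest route is to fix, for a compact neighborhood of $f(\spt\omega)$, a single uniform radius $r$ from Lemma \ref{spread}(a), partition $Y$ according to the cardinality and identity of the contributing preimage points, and check $\mathcal E$-continuity on each piece of this Borel partition.
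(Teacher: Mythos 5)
Your proof is correct and follows essentially the same route as the paper's: localize at $y\in Y$ via the representative $(A_f^r\omega)_y$ on a geodesic spread neighborhood, apply Corollary \ref{prest}, Lemma \ref{prext}, and Lemma \ref{precru} termwise, and let the powers of $i_f(x)$ cancel exactly as you describe, passing to the limit $r\to 0$ for the norm estimates. The only remark worth making is that the \emph{main obstacle} you identify---(partition-)continuity of the section---is not actually needed for this statement, since the target $\G_c^k(Y)$ consists of arbitrary compactly supported sections; that issue is treated separately in Proposition \ref{pushphi}.
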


\begin{proof}
	Linearity is straighforward to check (see Remark \ref{linearity}). Let $\omega\in\G_c^k(X)$ and $p\in Y$, $p\notin f(\spt\omega)$. Then $\spt\omega\cap f\inv(p)=\varnothing$ and therefore all the terms in the sum defining $f_\#\omega(p)$ are zero. This  proves (1).
	
	Let $B_r(p)$ be a geodesic spread neighborhood with respect to $\spt\omega$. By Corollary \ref{prest} we have
	\begin{align*}
		L_k(A^r_f\omega_p;B_r(p))\le &\sum_{x\in f\inv(p)}\frac{1}{i_f(x)^k}L^{k+1}i_f(x)^{k+1}L_k(\pi_x;U(x,r))\\
		= & L^{k+1}\sum_{x\in f\inv(p)}i_f(x)L_k(\pi_x;U(x,r)),
	\end{align*}
	where $\pi_x\in \Poly^k(U(x,r))$ satisfies $[\pi_x]_x=\omega(x)$ for $x\in \spt\omega\cap f\inv(p)$. Similarly 
	\begin{align*}
		\Lip_k(f^r_\#\omega_p;B_r(p))\le &\sum_{x\in f\inv(p)}\frac{1}{i_f(x)^k}L^{k}i_f(x)^{k+1}\Lip_k(\pi_x;U(x,r))\\
		= & L^{k}\sum_{x\in f\inv(p)}i_f(x)\Lip_k(\pi_x;U(x,r)).
	\end{align*}
	Taking the limit $r\to 0$ yields (2).
	
	To prove (3), we use Lemma \ref{prext}. We have
	\begin{align*}
		A^r_f(d\omega)_p&=\sum_{x\in f\inv(p)}\frac{1}{i_f(x)^{k+1}}f_{U(x,r)\#}(d\pi_x)=\sum_{x\in f\inv(p)}\frac{i_f(x)}{i_f(x)^{k+1}}df_{U(x,r)\#}\omega_x\\
		&=d\left(\sum_{x\in f\inv(p)}\frac{1}{i_f(x)^{k}}f_{U(x,r)\#}\pi_x\right)=dA^r_f\omega_p,
	\end{align*}
	for each $p\in Y$.
	
	For (4), let $p\in Y$ and let $\beta_p\in \Poly^k(B_r(p))$ be such that $[\beta_p]_p=\beta(p)$. For each $x\in \spt\omega\cap f\inv(p)$, choose polylipschitz functions $\alpha_x\in \Poly^k(U(x,r))$. By Lemma \ref{precru} we obtain
	\begin{align*}
		A^r_f(\alpha\smile f^\#\beta)_p&=\sum_{x\in f\inv(p)}\frac{1}{i_f(x)^{k+m}}f_{U(x,r)\#}(\alpha_x\smile f^\#\beta_p)\\
		&=\sum_{x\in f\inv(p)}\frac{i_f(x)^m}{i_f(x)^{k+m}}(f_{U(x,r)\#}\alpha_x)\smile \beta_p\\
		&=\left(\sum_{x\in f\inv(p)}\frac{1}{i_f(x)^{k}}f_{U(x,r)\#}\alpha_x\right)\smile \beta_p=(A_f^r\alpha_p)\smile \beta_p
	\end{align*}
	for each $p\in Y$. Thus
	\[
	f_\#(\alpha\smile f^\#\beta)(p)=[A_f^r(\alpha\smile f^\#\beta)_p]_p=[(A_f^r\alpha)_p\smile \beta_p]_p=\big((f_\#\alpha)\smile\beta\big) (p),
	\]
	for each $p\in Y$. 
\end{proof}

\subsection{Partition-continuity of the push-forward of polylipschitz forms}
\label{partcont} 
In general, $f_\#\omega$ need not be continuous for continuous $\omega\in\Gamma_c^k(X)$. However, $f_\#$ maps continuous sections to partition-continuous sections.
\begin{proposition}\label{pushphi}
	Let $f:X\to Y$ be a BLD-map between locally geodesic, oriented cohomology manifolds. Suppose $(\pi_0,\ldots,\pi_k)\in\sD^k(X)$, $\pi=\pi_0\otimes\cdots\otimes\pi_k\in \Poly_c^k(X)$ and $\omega=\imath(\pi)\in \Gamma_c^k(X)$. Then there is a finite Borel partition $\{ E_i \}_{j=1}^N$ of $Y$ for which 
	\[ 
	(f_\#\omega)|_{E_i}\in\Gamma^k(E_i) 
	\] 
	for each $j=1,\ldots,N$.
\end{proposition}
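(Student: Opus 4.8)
The plan is to prove the statement first when $f$ is \emph{proper} — allowing $\pi_1,\dots,\pi_k$ to be merely bounded Lipschitz — and then reduce the general case using that $\spt\omega$ is compact. I will use freely that open connected subsets of a locally geodesic oriented cohomology manifold are again of this type, and that the restriction of $f$ to a normal domain is a proper BLD-map onto its (open, connected) image.

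\emph{The proper case.} Let $g\colon V\to W$ be a proper $L$-BLD-map of degree $d=\deg g\ge1$ between connected locally geodesic oriented cohomology manifolds, and $\eta=\imath(\rho_0\otimes\cdots\otimes\rho_k)$ with $\rho_i\in\LIP_\infty(V)$ (the formula of Definition~\ref{push} still applies, since $g$ has finite fibres). I would stratify $W$ by fibre cardinality, $W_j=\{w\in W:\#g\inv(w)=j\}$ for $1\le j\le d$: each $W_j$ is locally closed — hence Borel — because $\{w:\#g\inv(w)\ge j\}$ is open (disjoint normal neighborhoods of $j$ preimages of $w_0$ map onto a neighborhood of $w_0$), and these sets partition $W$. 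The point is that $g_\#\eta$ is continuous on each $W_j$. Fix $w_0\in W_j$ with $g\inv(w_0)=\{x_1^0,\dots,x_j^0\}$ and $d_i=i_g(x_i^0)$; take $r>0$ so small that $B_r(w_0)$ is a geodesic spread neighborhood and $g\inv(B_r(w_0))=\bigsqcup_{i} W_i'$ with $W_i'=U_g(x_i^0,r)$, $gW_i'=B_r(w_0)$ (Remark~\ref{proper}). For $w\in W_j$ near $w_0$ the fibre of $w$ meets each $W_i'$ in exactly one point $x_i(w)$, necessarily with $i_g(x_i(w))=\mu_g(W_i',w)=d_i$. Lifting a geodesic $\gamma$ from $w$ to a nearby $w'$ in $W_j$ (which stays in $B_r(w_0)$) starting at $x_i(w)$ yields a path inside $W_i'$ ending at $x_i(w')$ of length $\le L\,\ell(\gamma)=L\,d(w,w')$, so each branch $x_i$ is $L$-Lipschitz near $w_0$ in $W_j$. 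Choosing a small normal neighborhood $W'\subset W_i'$ of $x_i^0$ one has $W'\cap g\inv(w)=\{x_i(w)\}$ for such $w$, whence $(g|_{W'})_\#\rho_a$ restricted to $W_j$ equals $d_i(\rho_a\circ x_i)$ near $w_0$; using $g_{W'\#}(\rho_0\otimes\cdots\otimes\rho_k)=(g|_{W'})_\#\rho_0\otimes\cdots\otimes(g|_{W'})_\#\rho_k$ together with Definitions~\ref{af} and~\ref{push},
\[
(g_\#\eta)\big|_{W_j}(w)=\sum_{i=1}^j d_i\bigl[(\rho_0\circ x_i)\otimes\cdots\otimes(\rho_k\circ x_i)\bigr]_w
\]
for $w$ near $w_0$ in $W_j$. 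The right-hand side is the germ of a single polylipschitz function on $W_j$ (each $\rho_a\circ x_i$ is bounded Lipschitz there), and these representatives agree on overlaps (both enumerate the fibre with multiplicities), so $(g_\#\eta)|_{W_j}\in\Gamma^k(W_j)$.

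\emph{The reduction.} Since $\spt\omega\subset\spt\pi_0$ is compact, so is $f(\spt\omega)$, and $f_\#\omega$ vanishes on the Borel set $Y\setminus f(\spt\omega)$ by Proposition~\ref{basicpush}~(1). For $y_0\in f(\spt\omega)$ with $f\inv(y_0)\cap\spt\omega=\{x_1,\dots,x_l\}$, Lemma~\ref{spread} and the compactness of $\spt\omega$ give $r>0$ with $W_i:=U_f(x_i,r)$ pairwise disjoint normal neighborhoods, $fW_i=B_r(y_0)$, and $f\inv(q)\cap\spt\omega\subset\bigcup_i W_i$ for all $q\in B_r(y_0)$. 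Since $i_{f|_{W_i}}=i_f$ and $A_{f|_{W_i}}=A_f$ on $W_i$, Definition~\ref{push} yields
\[
(f_\#\omega)\big|_{B_r(y_0)}=\sum_{i=1}^l (f|_{W_i})_\#\bigl(\imath(\pi_0|_{W_i}\otimes\cdots\otimes\pi_k|_{W_i})\bigr),
\]
each $f|_{W_i}$ being a proper $L$-BLD-map onto $B_r(y_0)$; by the proper case $f_\#\omega$ is continuous on the members of a finite Borel partition of $B_r(y_0)$. Covering the compact set $f(\spt\omega)$ by finitely many such balls, extending each of the corresponding partitions to $Y$ by adjoining the complement of its ball, and taking the common refinement together with $\{Y\setminus f(\spt\omega),\,f(\spt\omega)\}$ produces a finite Borel partition of $Y$; each of its members either lies in $Y\setminus f(\spt\omega)$, where $f_\#\omega\equiv0$, or lies inside a member of one of the partitions on which $f_\#\omega$ is already continuous.

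The main obstacle I anticipate is the proper case — specifically, the branched-cover bookkeeping needed to see that along each stratum $W_j$ the $j$ local inverse branches are everywhere defined, $L$-Lipschitz (via length-controlled path lifting of BLD-maps, as used in the proof of Lemma~\ref{locpush}) and of locally constant multiplicity, since this is exactly what collapses the tensor $g_{W'\#}(\rho_0\otimes\cdots\otimes\rho_k)$ to one polylipschitz form along $W_j$. A secondary point is ensuring the reduction to proper maps does not destroy finiteness of the partition; this is rescued by the compactness of $f(\spt\omega)$.
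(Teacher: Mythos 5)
Your proof is correct in substance, but it follows a genuinely different route from the paper's. The paper reduces to $\spt\pi_0$ contained in a normal domain $U$ via a Lipschitz partition of unity and then partitions $f(U)$ by the \emph{fiber equivalence} $\sim_{f|_U}$, which is defined through the monodromy representation of $f|_U$ and requires Lemmas \ref{indexform}--\ref{claim3} to match preimages together with their stabilizer data (hence their local indices) between equivalent points. You instead stratify by bare fiber cardinality, $W_j=\{w:\#g\inv(w)=j\}$, and observe that on $W_j$ the local indices of the branches are forced automatically: once $B_r(w_0)$ splits $g\inv B_r(w_0)$ into normal neighborhoods $W_i'$ with $\mu_g(W_i',\cdot)=d_i$, a point of $W_j$ has exactly one preimage in each $W_i'$, necessarily of index $d_i$, by the summation formula \eqref{summation}. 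This makes the paper's finer, monodromy-based partition unnecessary for this proposition and yields the more explicit local formula $\sum_i d_i\,(\rho_0\circ x_i)\otimes\cdots\otimes(\rho_k\circ x_i)$, with the branches $x_i$ locally $L$-Lipschitz on the stratum; what the paper's approach buys is a partition that is canonical and globally matched (Lemmas \ref{claim1}--\ref{claim2}), which is convenient when the same partition is reused, e.g.\ in Proposition \ref{pushcont}. Two points in your write-up deserve the same care the paper applies elsewhere: (i) a geodesic joining $w,w'\in B_r(w_0)$ need not stay in $B_r(w_0)$ under the paper's definition of geodesic neighborhood, so the lift-stays-in-$W_i'$ argument needs the radius adjustment of Lemma \ref{restbilip} (take $U(x_i^0,2Lr)$ normal and work in $B_r(w_0)$), and one should note that the matching permutation between two systems of branches is locally constant on the stratum so that the germs, as functions of $k+1$ variables, genuinely agree on overlaps; (ii) in the reduction, the inclusion $f\inv(q)\cap\spt\omega\subset\bigcup_iW_i$ for all $q\in B_r(y_0)$ requires a short compactness argument ruling out components of $f\inv B_r(y_0)$ that meet $\spt\omega$ without containing any $x_i$. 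Neither issue is a gap in the approach.
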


We prove Proposition \ref{pushphi} at the end of Section \ref{partcont}. For the proof, we briefly recall the monodromy representation of a proper branched covers.

Let $f:X\to  Y$ be a proper branched cover. Then there is a locally compact geodesic space $X_f$, a finite group $G=G_f$, called the \emph{monodromy group of $f$}, acting on $X_f$ by homeomorphisms, and a subgroup $H\le G$ satisfying 
\[ 
X_f/G\approx Y,\ U_f/H\approx X. 
\] 
The quotient maps 
\[ \overline f:X_f\to Y,\quad x\mapsto Gx,
\] 
and 
\[
\varphi:X_f\to X,\quad x\mapsto Hx,
\] 
are branched covers for which the diagram 
\begin{equation}\label{monodromy}
	\begin{tikzcd}
		X_f\arrow[swap]{d}{\varphi} \arrow[rd, "\bar f"]& \\
		X\arrow[swap]{r}{f} & Y
	\end{tikzcd}
\end{equation}
commutes. When $f$ is a BLD-map, the group $G$ acts on $X_f$ by bilipschitz maps, and $\overline f$ and $\varphi$ are BLD-maps. See \cite{aal17} and the references therein for details on monodromy representations.

The following multiplicity formula is a counterpart of \eqref{eq:indexcomp}. We refer to \cite{aal17} for similar multiplicity formulas.
\begin{lemma}\label{indexform}
	Let  $f:X\to Y$ be a proper branched cover. Consider the monodromy triangle (\ref{monodromy}) associated to $f$. For $w\in X_f$, denote by $H_w\le G_w$ the stabilizers of $w$ of $H$ and $G$, respectively. Then we have the identity \[ |G_w|=i_f(\varphi(w))|H_w| \]  for all $w\in X_f$. 
\end{lemma}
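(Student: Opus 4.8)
The plan is to compute the local indices of the two legs of the monodromy triangle \eqref{monodromy} at $w$ directly from the summation formula, and then combine them by the chain rule for the local index. Recall that $\varphi\colon X_f\to X$ is the quotient map $w\mapsto Hw$, that $\bar f\colon X_f\to Y$ is the quotient map $w\mapsto Gw$, both of which are proper branched covers between oriented cohomology manifolds, and that $\bar f=f\circ\varphi$. Fix an orientation on $X_f$ for which $\varphi$ (and hence also $\bar f=f\circ\varphi$) is sense-preserving.

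First I would prove that $i_\varphi(w)=|H_w|$. The fiber of $\varphi$ through $w$ is the orbit $\varphi\inv(\varphi(w))=Hw$, which has exactly $[H:H_w]$ points. Each $h\in H$ acts on $X_f$ as a homeomorphism with $\varphi\circ h=\varphi$, and it must be sense-preserving, since composing the sense-preserving $\varphi$ with a sense-reversing $h$ would make $\varphi\circ h=\varphi$ sense-reversing; consequently the local degree is unchanged under precomposition with $h$, so $i_\varphi$ is constant along the orbit $Hw$. Write $m$ for its value there. Since the $H$-action is faithful with generic trivial stabilizers, a generic fiber of $\varphi$ consists of $|H|$ points at which $\varphi$ is a local homeomorphism, so $\deg\varphi=|H|$; the summation formula \eqref{summation} in its global form then gives
\[
|H|=\deg\varphi=\sum_{w'\in Hw}i_\varphi(w')=[H:H_w]\cdot m,
\]
whence $m=|H_w|$. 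Running the identical argument for $\bar f$ and the group $G$ yields $i_{\bar f}(w)=|G_w|$.

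It then only remains to apply the chain rule \eqref{eq:indexcomp} for the local index to the factorization $\bar f=f\circ\varphi$:
\[
|G_w|=i_{\bar f}(w)=i_f(\varphi(w))\,i_\varphi(w)=i_f(\varphi(w))\,|H_w|,
\]
which is the claimed identity.

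The computation sketched above rests on several structural facts about the monodromy representation recalled from \cite{aal17}, and verifying (or citing) these is where the real content lies: that $X_f$ is an oriented cohomology manifold, so that $i_\varphi$, $i_{\bar f}$ and the chain rule \eqref{eq:indexcomp} are available; that $\varphi$ and $\bar f$ are proper, so that $\deg\varphi$ and $\deg\bar f$ are defined; and that the $H$- and $G$-actions are faithful by homeomorphisms with generic trivial stabilizers, which is what makes $\deg\varphi=|H|$ and $\deg\bar f=|G|$, identifies the fiber through $w$ with the orbit of $w$, and (together with the orientation choice) gives the invariance of $i_\varphi$ along orbits. Once these inputs are in hand, the argument is pure bookkeeping with the summation formula and the chain rule.
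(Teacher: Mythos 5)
Your route is genuinely different from the paper's, but it rests on a premise that the paper does not supply and that is the real sticking point. Every tool you invoke for the two legs of the triangle \eqref{monodromy} --- the local index $i_\varphi$, $i_{\bar f}$, the summation formula \eqref{summation}, and the chain rule \eqref{eq:indexcomp} --- is developed in the paper only for branched covers \emph{between oriented cohomology manifolds}, since the local degree is defined via compactly supported Alexander--Spanier cohomology of the domain. The paper describes $X_f$ only as a ``locally compact geodesic space'' on which $G$ acts by homeomorphisms; it never asserts that $X_f$ is an oriented cohomology $n$-manifold, and this is not a routine citation --- whether the monodromy space carries such structure is a delicate question in its own right. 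You flag this as one of several ``structural facts to verify or cite,'' but it is precisely the point on which your argument stands or falls: without it, $i_\varphi(w)$, $i_{\bar f}(w)$, $\deg\varphi$, and the identity $i_{\bar f}(w)=i_f(\varphi(w))\,i_\varphi(w)$ are simply undefined in the framework of the paper. (Your subsidiary steps --- constancy of $i_\varphi$ along an $H$-orbit via sense-preservation, and $|H|=[H:H_w]\cdot i_\varphi(w)$ --- are correct bookkeeping \emph{conditional} on that framework being available.)

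The paper's proof is structured exactly so as to avoid putting any degree theory on $X_f$. It restricts to a normal domain $W$ of $\bar f$ containing $w$, observes that $g=\bar f|_W$ and $\varphi|_W$ are orbit maps for the stabilizers $G_w$ and $H_w$ acting on $W$, and counts the fiber of $g$ over a single generic point $p'\in \bar f(W)$ (chosen off the images of the branch sets) in two ways: directly, $|g\inv(p')|=|G_w|$ because the $G_w$-action on that fiber is free; and through the factorization $g=(f|_{\varphi(W)})\circ(\varphi|_W)$, which gives $|g\inv(p')|=|H_w|\deg(f|_{\varphi(W)})=|H_w|\,i_f(\varphi(w))$. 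The only local degree that ever appears is that of $f|_{\varphi(W)}$, a branched cover between open subsets of the honest cohomology manifolds $X$ and $Y$. If you replace your appeals to \eqref{summation} and \eqref{eq:indexcomp} on $X_f$ by this kind of raw fiber counting for the orbit maps, your argument collapses into the paper's; as written, it has a genuine gap.
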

\begin{proof}
	Let $w\in X_f$ and let $W\subset X_f$ be a normal domain for $\bar f$. Then $\varphi(W)\subset X$ is a normal neighborhood of $\varphi(w)$ with respect to $f$. We denote $g=(\bar f)|_W:W\to \bar f(W)$. The stabilizers $G_w$ and $H_w$ act on $W$ and the restrictions $g$ and $\varphi|_W$ are orbit maps with respect to the action. Thus, the commuting diagram
	\[
	\begin{tikzcd}
	W\arrow[swap]{d}{\varphi|_W} \arrow[rd, "g"]& \\
	\varphi(W)\arrow[swap]{r}{f|_{\varphi(W)}} & \bar f(W)
	\end{tikzcd}
	\]
	is a monodromy representation of $f|_{\varphi(W)}$, with monodromy group $G_w$, and $\varphi|_W$ is the orbit map for $H_w$. Since $gB_g$ and $fB_f$ are nowhere dense, there exists $p'\in \bar f\setminus\big(gB_g\cup fB_f\big)$. Since $g$ is the orbit map for $G_w$, we have that
	\[
	|G_w|=|g\inv(p')|.
	\]
	On the other hand, $(f|_{\varphi(W)})\inv(p')\cap (\varphi|_W)B_{\varphi|_W}=\varnothing$. We conclude that
	\[
	\begin{split}
	|G_w|=|g\inv(p')|=|(\varphi|_W)\inv \big((f|_{\varphi(W)})\inv (p')\big)|=|H_w| \deg(f|_{\varphi(W)}).
	\end{split}
	\]
	Since $\deg(f|_{\varphi(W)})=i_f(w)$, the claim follows. 
\end{proof}

\subsubsection*{Fiber equivalence} Throughout this subsection we fix a proper BLD-map $f:X\to Y$. We introduce the fiber equivalence on $Y$ using the  monodromy representation (\ref{monodromy}) of $f$. Two points $p,q\in Y$ are said to be \emph{fiber equivalent}, $p\sim_f q$, if $|f\inv(p)|=|f\inv(q)|$ and there are labelings of the preimages 
\[ 
f\inv(p)=\{x_1,\ldots,x_m\}\quad\textrm{and } f\inv (q)=\{y_1,\ldots,y_m \} 
\] 
satisfying 
\[ 
|\varphi\inv(x_j)|=|\varphi\inv(y_j)|\quad \textrm{for each } j=1,\ldots,m. 
\]

\begin{lemma}\label{claim3}
	The equivalence relation $\sim_f$ has finitely many equivalence classes, each of which is a Borel set.
\end{lemma}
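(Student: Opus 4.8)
The plan is to reduce the equivalence relation $\sim_f$ to a finite combinatorial invariant attached to each point of $Y$, and then to show that the fibers of that invariant are Borel. First I would use Lemma \ref{indexform} together with the monodromy triangle (\ref{monodromy}) to encode the data "$|f\inv(p)|=m$ and the multiset $\{|\varphi\inv(x_j)|\}_{j=1}^m$" in terms of stabilizers: if $\overline f\inv(p)$ is a $G$-orbit $Gw$, then $|f\inv(p)|=[G:G_w]$ and for the preimage point $x_j=\varphi(w_j)$ corresponding to a double coset we have $|\varphi\inv(x_j)|=[H_{w_j}:H_{w_j}]$-type data, which by Lemma \ref{indexform} is governed by $i_f(x_j)=|G_{w_j}|/|H_{w_j}|$. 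Since $G$ is a fixed finite group, there are only finitely many possible conjugacy classes of subgroups $G_w\le G$ and finitely many possible multisets of $H$-double-coset sizes in $G/G_w$; hence the map sending $p$ to this combinatorial type takes finitely many values, and $p\sim_f q$ exactly when they have the same type. This gives the first assertion, the finiteness of the number of equivalence classes.

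For the Borel measurability of each class, I would argue that each of the finitely many "type" sets is Borel by exhibiting the invariants as Borel functions of $p$. The function $p\mapsto |f\inv(p)|$ is Borel: on the complement $Y\setminus fB_f$ it is constant equal to $\deg f$ (by the properness and orientation-preserving hypothesis, recalled in the Branch set subsection), $fB_f$ is closed with topological dimension $\le n-2$ hence Borel, and on $fB_f$ one stratifies further using that $fB_f$ is itself (locally) a finite union of sets on which the fiber cardinality is controlled — alternatively, $p\mapsto |f\inv(p)|$ is upper semicontinuous for a proper discrete open map by Remark \ref{proper} (the preimages are locally a disjoint union of normal neighborhoods $U(x',r)$, and cardinality can only drop under limits), and an upper semicontinuous integer-valued function is Borel. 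The same upper/lower semicontinuity reasoning, applied to the proper branched cover $\varphi$ composed appropriately, shows that the refined data (the sorted tuple of the numbers $|\varphi\inv(x)|$ over $x\in f\inv(p)$) is a Borel function of $p$; being a Borel map into a finite set, each level set is Borel.

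The main obstacle I expect is bookkeeping the matching of preimages: the definition of $\sim_f$ requires a \emph{simultaneous} labeling $f\inv(p)=\{x_1,\dots,x_m\}$, $f\inv(q)=\{y_1,\dots,y_m\}$ with $|\varphi\inv(x_j)|=|\varphi\inv(y_j)|$, so one must check that this is equivalent to the multiset of values $\{|\varphi\inv(x)|:x\in f\inv(p)\}$ (with multiplicity) being equal to the corresponding multiset for $q$ — this is immediate once phrased in terms of multisets, but it must be said. The second delicate point is the measurability of $p\mapsto$ (multiset of $|\varphi\inv(x)|$ over $x\in f\inv(p)$): the cleanest route is to note that by Lemma \ref{indexform} this multiset is determined by the multiset $\{i_f(x):x\in f\inv(p)\}$ together with the (fixed, finitely many) possible local monodromy data, and that $x\mapsto i_f(x)$ is integer-valued and locally constant off the nowhere dense set $B_f$, so that the relevant pushforward-type function on $Y$ is Borel by the stratification of $Y$ into the finitely many Borel sets on which the fiber structure is locally trivial. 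Assembling these finitely many Borel conditions yields that each $\sim_f$-class is Borel, completing the proof.
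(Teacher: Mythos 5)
Your proposal follows essentially the same route as the paper: the classes are indexed by the data $(m;k_1,\dots,k_m)$ with $m\le\deg f$ and $k_j\le|G|$, which gives finiteness, and Borelness is obtained by showing the fiber-counting invariants are Borel functions of $p$ --- the paper does this by writing each class as $E_m\cap E(k_1)\cap\cdots\cap E(k_m)$ with $E(k)=f(\{x\in X:|\varphi^{-1}(x)|=k\})$, while you use semicontinuity of fiber cardinalities (which, note, is \emph{lower} semicontinuity: the count can only drop at the limit point), and the two mechanisms amount to the same thing. Your explicit attention to the multiset/multiplicity bookkeeping is a point where you are if anything more careful than the paper's displayed identity; the one step to firm up is the measurability of $p\mapsto\{|\varphi^{-1}(x)|:x\in f^{-1}(p)\}$ counted with multiplicity, which is cleaner done directly from the Borel sets $A(k)=\{x:|\varphi^{-1}(x)|=k\}$ (e.g.\ via Borelness of $p\mapsto|f^{-1}(p)\cap A(k)|$) than via the somewhat circular detour through $i_f$ and ``local monodromy data''.
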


\begin{proof}
	The equivalence classes of $\sim_f$ are
	\[
	\{E_m(k_1,\ldots,k_m): m\in\N,\ k_1,\ldots,k_m\in \N \},
	\]
	where
	\begin{align*}
		E_m (k_1,\ldots,k_m) =\{p\in Y:f|_U\inv(p)=\{x_1,\ldots,x_m\},\ |\varphi\inv(x_j)|=k_j,\ j=1,\ldots,m \}
	\end{align*}
	for each $m\in \N$ and $k_1,\ldots,k_m\in\N$. Since the sets $E_m(k_1,\ldots,k_m)$ are empty if $k_j>|G|$ or $m>\deg f$, we find that there are only finitely many equivalence classes of $\sim_f$.
	
	To see that each of the sets $E_m(k_1,\ldots,k_m)$ is Borel, set
	\[ 
	E_m=\{ p\in Y: |f\inv(p)|=m \} 
	\] 
	and 
	\[ 
	E(k)=f(A(k)),\ A(k)=\{ x\in X: |\varphi\inv(x)|=k \}. 
	\] 
	The sets $E_m$ and $E(k)$ are clearly Borel. Observe that 
	\[ 
	E_m(k_1,\ldots,k_m)=E_m\cap E(k_1)\cap \cdots\cap E(k_m), 
	\] 
	whence the Borel measurability of the equivalence classes follows.
\end{proof}

\begin{lemma}\label{claim1}
	If $p\sim_f q$, then $|\bar f\inv(p)|=|\bar f\inv(q)|$ and $i_f(x_j)=i_f(y_j)$ for each $j=1,\ldots,m$.
\end{lemma}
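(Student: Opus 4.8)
The plan is to pass to the monodromy representation \eqref{monodromy} of the proper BLD-map $f$ and reduce both assertions to an elementary orbit--stabilizer count, feeding the outcome into Lemma \ref{indexform}.

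First I would record two structural facts about the commuting triangle \eqref{monodromy}. Since $\bar f\colon X_f\to X_f/G\approx Y$ and $\varphi\colon X_f\to X_f/H\approx X$ are the quotient maps for the actions of $H\le G$ on $X_f$, the fiber $\bar f\inv(p)$ is a single $G$-orbit for every $p\in Y$, and $\varphi\inv(x)$ is a single $H$-orbit for every $x\in X$. Because $\bar f=f\circ\varphi$, we have $\bar f\inv(p)=\varphi\inv(f\inv(p))$, and since the fibers of $f$ are finite ($f$ being proper and discrete) this is a disjoint decomposition
\[
\bar f\inv(p)=\bigsqcup_{j=1}^m\varphi\inv(x_j),\qquad f\inv(p)=\{x_1,\ldots,x_m\},
\]
so $|\bar f\inv(p)|=\sum_{j=1}^m|\varphi\inv(x_j)|$, and likewise $|\bar f\inv(q)|=\sum_{j=1}^m|\varphi\inv(y_j)|$ for the labeling of $f\inv(q)=\{y_1,\ldots,y_m\}$ furnished by $p\sim_f q$. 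As that labeling satisfies $|\varphi\inv(x_j)|=|\varphi\inv(y_j)|$ for each $j$, summing gives $|\bar f\inv(p)|=|\bar f\inv(q)|$, which is the first claim.

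For the index identity, fix $j$ and pick $w_j\in\varphi\inv(x_j)$ and $v_j\in\varphi\inv(y_j)$; then $\bar f(w_j)=f(x_j)=p$ and $\bar f(v_j)=q$. Writing $H_{w_j}\le G_{w_j}$ and $H_{v_j}\le G_{v_j}$ for the relevant stabilizers, the orbit--stabilizer theorem applied to the $G$-orbit $\bar f\inv(p)$ and to the $H$-orbit $\varphi\inv(x_j)$ gives
\[
|G_{w_j}|=\frac{|G|}{|\bar f\inv(p)|},\qquad |H_{w_j}|=\frac{|H|}{|\varphi\inv(x_j)|},
\]
and the analogous formulas for $v_j$. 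By the first claim $|\bar f\inv(p)|=|\bar f\inv(q)|$, and by fiber equivalence $|\varphi\inv(x_j)|=|\varphi\inv(y_j)|$; hence $|G_{w_j}|=|G_{v_j}|$ and $|H_{w_j}|=|H_{v_j}|$. Lemma \ref{indexform} then yields
\[
i_f(x_j)=i_f(\varphi(w_j))=\frac{|G_{w_j}|}{|H_{w_j}|}=\frac{|G_{v_j}|}{|H_{v_j}|}=i_f(\varphi(v_j))=i_f(y_j),
\]
as desired.

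The argument is essentially bookkeeping with finite group actions, so I do not anticipate a genuine obstacle. The one point that deserves explicit justification is that the fibers of $\varphi$ and of $\bar f$ are truly single orbits of $H$ and of $G$ — which is precisely what makes orbit--stabilizer applicable — together with the compatibility of the labeling supplied by the definition of $\sim_f$ with the orbit decomposition $\bar f\inv(p)=\bigsqcup_j\varphi\inv(x_j)$ (i.e.\ that the $H$-orbit indexed by $j$ over $p$ lies over $x_j$). Both are immediate from the construction of the monodromy representation, but should be stated rather than left implicit.
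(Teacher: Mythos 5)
Your proof is correct and follows essentially the same route as the paper's: both establish $|\bar f\inv(p)|=|\bar f\inv(q)|$ by decomposing the $G$-orbit $\bar f\inv(p)$ into the $H$-orbits $\varphi\inv(x_j)$ and summing, and both derive $i_f(x_j)=i_f(y_j)$ from the orbit--stabilizer theorem combined with Lemma \ref{indexform}. No substantive differences.
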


\begin{proof}
	Let $z\in \bar f\inv(p)$ and $z'\in \bar f\inv(q)$. Then \[ |\bar f\inv(p)|=|Gz|=\sum_j^m|\varphi\inv(x_j)|=\sum_j^m|\varphi\inv(y_j)|=|Gz'|=|\bar f\inv(q)|.  \] For  second claim let $H_w\le G_w$ be the stabilizer subgroups of $w\in X_f$ in $H$ and $G$, respectively. Since $\varphi$ is an orbit map of the action $H\curvearrowright X_f$, we have that $|Hw|=|\varphi\inv\varphi(w)|.$ Hence
	\begin{equation*}\label{f1}
		|H_w|=\frac{|H|}{|Hw|}=\frac{|H|}{|\varphi\inv(\varphi(w))|}.
	\end{equation*}
	Thus, by Lemma \ref{indexform}, we have, for $w=z_j\in \varphi\inv (x_j)$ and $w=z'_j=\varphi\inv(y_j)$, that 
	\[ 
	i_f(x_j)=\frac{|G_{z_j}|}{|H_{z_j}|}=\frac{|G|}{|G_{z_j}|}\frac{|\varphi\inv(x_j)|}{|H|}=\frac{|G|}{|H|}\frac{|\varphi\inv(x_j)|}{|\bar f\inv(p)|} =\frac{|G|}{|H|}\frac{|\varphi\inv(y_j)|}{|\bar f\inv(q)|}=i_f(y_j)
	\] 
	for each $j=1,\ldots,m$.
\end{proof}

\begin{lemma}\label{claim2}
	Let $p\sim_f q$. Let $B_r(p)$ and $B_s(q)$ be spread neighborhoods for $f$ and $\bar f$. Then 
	\[ 
	f\inv(B_r(p)\cap B_s(q))=\bigcup_{j=1}^m \left( U_f(x_j,r)\cap U_f(y_j,s)\right). 
	\]
\end{lemma}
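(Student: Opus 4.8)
The inclusion $\supseteq$ is immediate, since $U_f(x_j,r)\subseteq f^{-1}(B_r(p))$ and $U_f(y_j,s)\subseteq f^{-1}(B_s(q))$, so all the content is in $\subseteq$. The plan is as follows. Since $B_r(p)$ and $B_s(q)$ are spread neighborhoods, Remark \ref{proper} gives $f^{-1}(B_r(p))=\bigsqcup_{i=1}^m U_f(x_i,r)$ and $f^{-1}(B_s(q))=\bigsqcup_{j=1}^m U_f(y_j,s)$ as disjoint unions of normal neighborhoods of the preimage points, whence $f^{-1}(B_r(p)\cap B_s(q))$ is the disjoint union of all the sets $U_f(x_i,r)\cap U_f(y_j,s)$, $1\le i,j\le m$. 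If $B_r(p)\cap B_s(q)=\varnothing$ there is nothing to prove. Otherwise, since $f$ maps each normal domain $U_f(x_i,r)$ onto the component of $B_r(p)$ containing $p$, every $U_f(x_i,r)$ meets some $U_f(y_j,s)$ and conversely; thus it suffices to show that each $U_f(x_i,r)$ meets \emph{exactly one} $U_f(y_j,s)$ and each $U_f(y_j,s)$ exactly one $U_f(x_i,r)$. The resulting correspondence $i\leftrightarrow j$ is then a permutation of $\{1,\dots,m\}$, and relabeling the $y_j$ along it leaves the right-hand side equal to $\bigcup_j(U_f(x_j,r)\cap U_f(y_j,s))$; Lemmas \ref{indexform} and \ref{claim1} are then invoked to see that this relabeling still respects $|\varphi^{-1}(x_j)|=|\varphi^{-1}(y_j)|$, so it witnesses $p\sim_f q$.

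To prove the ``exactly one'' claim I would lift to the monodromy triangle \eqref{monodromy}, using that $B_r(p)$ and $B_s(q)$ are spread for $\bar f$ as well. Then $\bar f^{-1}(B_r(p))=\bigsqcup_{w\in\bar f^{-1}(p)}U_{\bar f}(w,r)$ and $\bar f^{-1}(B_s(q))=\bigsqcup_{v\in\bar f^{-1}(q)}U_{\bar f}(v,s)$, and since $\varphi$ is the orbit map of $H\curvearrowright X_f$ and the stabilizer $H_w$ fixes $U_{\bar f}(w,r)$, one obtains $\varphi^{-1}(U_f(x_i,r))=\bigsqcup_{w\in\varphi^{-1}(x_i)}U_{\bar f}(w,r)$, and similarly over $y_j$. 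Hence $U_f(x_i,r)\cap U_f(y_j,s)\ne\varnothing$ precisely when $U_{\bar f}(w,r)\cap U_{\bar f}(v,s)\ne\varnothing$ for some $w\in\varphi^{-1}(x_i)$ and $v\in\varphi^{-1}(y_j)$, so it is enough to establish the ``exactly one'' statement for the orbit map $\bar f$ and transport it through $\varphi$.

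For $\bar f$, the sets $\bar f^{-1}(B_r(p))$, $\bar f^{-1}(B_s(q))$ and their intersection are $G$-invariant, $G$ permutes the components of each of the first two transitively, and $\bar f$ restricted to a component $U_{\bar f}(w,r)$ is the orbit map of the finite stabilizer $G_w$. Since $\bar fB_{\bar f}$ has topological dimension at most $n-2$ and hence does not locally separate $Y$, every component of $B_r(p)\cap B_s(q)$ contains a point $y_0\notin\bar fB_{\bar f}$, over which $\bar f^{-1}(y_0)\cap U_{\bar f}(w,r)$ consists of exactly $i_{\bar f}(w)=|G_w|$ points forming a single $G_w$-orbit. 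The plan is to show that the partition of $\bar f^{-1}(y_0)$ induced by the components $U_{\bar f}(v,s)$ coincides with the one induced by the components $U_{\bar f}(w,r)$, using that inside a normal domain $U_{\bar f}(w,r)$, where $\bar f$ is a finite quotient map, the preimage of a connected subset of $B_r(p)$ cannot straddle two distinct components of $\bar f^{-1}(B_s(q))$; this yields ``exactly one'' over each component of $B_r(p)\cap B_s(q)$, and $G$-equivariance together with transitivity of the $G$-actions propagates it to all of them.

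The step I expect to be the real obstacle is precisely this last one: excluding that a single normal domain $U_{\bar f}(w,r)$ over $p$ meets several normal domains over $q$. Because $B_r(p)\cap B_s(q)$ need not be connected and need not avoid the branch locus, one cannot simply quote the path-lifting bijection of Lemma \ref{bijection}; the argument has to combine the disjointness and normality of the spread neighborhoods with the orbit-map description of $\bar f$ on each normal domain. Everything else — the two inclusions, the reduction through the monodromy triangle, and the verification that the relabeling is compatible with fiber equivalence — is bookkeeping built on Remark \ref{proper} and Lemmas \ref{indexform} and \ref{claim1}.
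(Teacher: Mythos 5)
Your reduction is the right one, and it matches the paper's: the easy inclusion, the disjoint-union decompositions from Remark \ref{proper}, and the passage through the monodromy triangle \eqref{monodromy} (using that the balls are spread for $\bar f$, that $\varphi$ is the orbit map of $H$, and that $f\inv A=\varphi(\bar f\inv A)$) are exactly how the paper sets things up, and you have correctly isolated where all the content lies, namely that each normal domain over $p$ meets exactly one normal domain over $q$. But you do not prove that step; you only announce a plan for it, and the plan as described does not go through. The assertion that inside a normal domain $U_{\bar f}(w,r)$ the preimage of a connected subset of $B_r(p)$ cannot straddle two distinct components of $\bar f\inv(B_s(q))$ is not a property of orbit maps of finite groups: the preimage of a connected set under a branched cover is in general disconnected, and its pieces are free to land in different components of $\bar f\inv(B_s(q))$. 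Concretely, when $B_r(p)\cap B_s(q)$ is disconnected, the incidence pattern between components over $p$ and components over $q$ can differ over different components of the intersection: for the $2$-BLD double cover $(\theta,t)\mapsto(2\theta,t)$ of a flat cylinder, with $p$ and $q$ antipodal on the core circle and $r=s$ slightly larger than $d(p,q)/2$, each of the two normal domains over $p$ meets \emph{both} normal domains over $q$, even though all the hypotheses of the lemma (spread for $f$ and $\bar f$, fiber equivalence) are satisfied. So your scheme of reading off a single partition of $\bar f\inv(y_0)$ from one regular value $y_0$ in one component of $B_r(p)\cap B_s(q)$ and propagating it by $G$-equivariance cannot deliver a global matching; the obstacle you flag is genuine and is not removed by the orbit-map structure alone.

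For comparison, the paper's own proof takes the same monodromy route and disposes of this very step in a single asserted line: after labeling $\varphi\inv(x_j)=\{z_j^1,\ldots,z_j^{k_j}\}$ and $\varphi\inv(y_j)=\{w_j^1,\ldots,w_j^{k_j}\}$, it claims that the equalities $|G_{z_j^l}|=|G_{w_j^l}|$ (which do follow from Lemmas \ref{indexform} and \ref{claim1}) already force $\bar f\inv(B_r(p)\cap B_s(q))=\bigcup_{j,l}\bigl(U_{\bar f}(z_j^l,r)\cap U_{\bar f}(w_j^l,s)\bigr)$, and then pushes down by $\varphi$. Equality of stabilizer orders is a statement about multiplicities, not about which components meet which, so the paper does not supply the missing argument either, and the cylinder configuration above shows that some additional hypothesis or restriction (for instance on how the two balls overlap) is needed for the matching statement to hold at all. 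In short: you have correctly located the crux and your surrounding bookkeeping is sound, but the decisive claim is left unproven in your proposal, and the one mechanism you offer for it fails as stated.
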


\begin{proof}
	For each $j=1,\ldots,m$ let $k_j=|\varphi\inv(x_j)|=|\varphi\inv(y_j)|$ and let
	\[ 
	\varphi\inv(x_j)=\{z_j^1,\ldots,z_j^{k_j}\},\ \varphi\inv(y_j)=\{w_j^1,\ldots,w_j^{k_j}\}. 
	\] 
	Since $|G_{z_j^l}|=|G_{w_j^l}|$ for all $j=1,\ldots,m$ and $l=1,\ldots,k_j$, we have that 
	\[ 
	\bar f\inv(B_r(p)\cap B_s(q))=\bigcup_{j=1}^m\bigcup_{l=1}^{k_j}\big(U_{\bar f}(z_j^l,r)\cap U_{\bar f}(w_j^l,s)\big). 
	\] 
	The claim now follows from the identity $f\inv A=\varphi(\bar f\inv A)$ for $A\subset Y$. 
\end{proof}

\begin{proof}[Proof of Proposition \ref{pushphi}]
	Suppose first that $\spt\pi_0\subset U$ where $U\subset X$ is a normal domain for $f$. Then $f_\#(\omega)(p)=0$ whenever $p\in Y\setminus f(U)$.
	
	Define the partition $\mathcal E$ on $f(U)$ as the collection of equivalence classes of the fiber equivalence relation $\sim\ :=\ \sim_{f|_U}$ related to $f|_U$.

	Let $p,q\in f(U)$ be fiber equivalent, that is $p\sim q$, and let $B_r(p)$ and $B_r(q)$ be geodesic spread neighborhoods for $f|_U$ and $\overline{f|_U}$. Then, by Lemma \ref{claim2}, \[ U_f(x_j,r)\cap (f|_U)\inv(z)=U_f(y_j,s)\cap (f|_U)\inv(z) \] for $j=1,\ldots,m$ and $z\in B_r(p)\cap B_s(q)$.
	
	It follows that, for each $\overline z=(z_0,\ldots,z_k)\in (B_r(p)\cap B_s(q))^{k+1}$, we have 
	\[ 
	(f|_{U(x_j,r)})_\#\pi_l(z_l)=(f|_{U(y_j,s)})_\#\pi_l(z_l) 
	\] 
	for all $l=0,\ldots, k$. Thus, by Lemma \ref{claim1},
	\begin{align*}
		A^r_f\omega_p(\overline z)=&\sum_{j=1}^m\frac{1}{i_f(x_j)^k}(f|_{U(x_j,r)})_\#\pi_0(z_0)(f|_{U(x_j,r)})_\#\pi_1(z_1)\cdots (f|_{U(x_j,r)})_\#\pi_k(z_k)\\
		=&\sum_{j=1}^m\frac{1}{i_f(y_j)^k}(f|_{U(y_j,s)})_\#\pi_0(z_0)(f|_{U(y_j,s)})_\#\pi_1(z_1)\cdots (f|_{U(y_j,s)})_\#\pi_k(z_k)\\
		=&A^s_f\omega_q(\overline z)
	\end{align*}
	for all $\bar z\in(B_r(p)\cap B_s(q))^{k+1}$.
	
	For every $p\in U$ choose a radius $r_p>0$ such that $B_{r_p}(p)$ is a geodesic spread neighborhood for $f$ with respect to $\spt\omega$. We have proved that $\{A^{r_p}_f\omega_p \}_{B_{r_p}(p)}$ satisfies the overlap condition (1) in \cite[Definition 6.2]{teripekka1} for every equivalence class of the fiber equivalence. Condition (2) in \cite[Definition 6.2]{teripekka1} follows from the first estimate in Corollary \ref{prest}. Indeed, for each $p\in Y$, we have 
	\begin{align*}
		&L_k(A^r_f\omega_p;E\cap B_r(p))\le \sum_{x\in f\inv(p)}\frac{L^{k+1}i_f(x)^{k+1}}{i_f(x)^k}L_k(\pi_0\otimes\cdots\otimes\pi_k;E\cap U(x,r))\\
		&\le L^{k+1} \left(\sum_{x\in \spt\omega\cap f\inv(p)}i_f(x)\right) L(\pi_0|_{B(\spt\pi_0,r)})\cdots L(\pi_k|_{B(\spt\pi_0,r)}).
	\end{align*}
	By Lemma \ref{claim3}, $\mathcal E$ is a finite Borel partition. Thus $f_\#\omega\in \Parcon(X)$.

	We have demonstrated that $f_\#\omega$ is $\mathcal E$-continuous under the assumption that  $\spt\pi_0$ is contained in a normal domain for $f$. Suppose $(\pi_0,\ldots,\pi_k)\in\sD^k(X)$ and let  $\omega=\imath(\pi_0\otimes\cdots\otimes\pi_k)\in \Gamma_c^k(X)$. Set $K=\spt\omega$. Let $\mathcal U=\{ U_1,\ldots, U_M \}$ be a finite covering of $K$ by normal domain for $f$ and let  $\{ \varphi_1,\ldots,\varphi_{M+1} \}$ be a Lipschitz partition of unity subordinate to $\mathcal U\cup \{X\setminus K\}$ satisfying  $\spt\varphi_{M+1}\subset X\setminus K$.
	
	For each $l=1,\ldots,M+1$,  $\spt(\varphi_l\pi_0)$ is contained in a normal domain for $f$. Thus $f_\#(\varphi_l\omega)$ is $\mathcal E$-continuous. We conclude that the finite sum 
	\[ 
	f_\#\omega=\sum_{l=1}^Mf_\#(\varphi_l\omega) 
	\] 
	is $\mathcal E$-continuous. 
\end{proof}

\subsection{Pull-back of currents of locally finite mass by BLD-maps} To define the pull-back of a $k$-current $T\in M_{k,loc}(X)$ as $T\circ f_\#$ (see the discussion in the introduction) it remains to show that the resulting functional is weakly continuous. 

\begin{proposition}\label{pushcont}
	Let $f:X\to Y$ be a BLD-map between locally geodesic, oriented cohomology manifolds. If $\pi^n\to \pi$ in $\Poly_c^k(X)$ then $f_\#\pi^n\to f_\#\pi$ in $\Parcon(X)$.
\end{proposition}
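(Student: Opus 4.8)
The plan is to reduce the assertion, via a Lipschitz partition of unity, to the case where all the $\pi^n$ and $\pi$ are pushed forward from a single normal domain, then to reuse the combinatorial part of the proof of Proposition \ref{pushphi} to obtain \emph{one} finite Borel partition of $Y$ and \emph{one} family of representing polylipschitz functions valid for every member of the sequence, and finally to run the estimates of Corollary \ref{prest} and Lemma \ref{locpush} through the difference. First recall, from the proof of Lemma \ref{precont}, that $\pi^n\to\pi$ in $\Poly_c^k(X)$ provides a compact set $K_0\subset X$ containing the supports of all the forms $\imath(\pi^n)$ and $\imath(\pi)$, together with a representation $\pi^n-\pi=\sum_j\pi_0^{j,n}\otimes\cdots\otimes\pi_k^{j,n}$ satisfying $C:=\sup_n\sum_j L(\pi_0^{j,n})\cdots L(\pi_k^{j,n})<\infty$ and $\varepsilon_n:=\sum_j\|\pi_0^{j,n}\|_\infty\cdots\|\pi_k^{j,n}\|_\infty\to 0$. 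Cover $K_0$ by normal domains $U_1,\ldots,U_M$ for $f$ and fix a Lipschitz partition of unity $\varphi_1,\ldots,\varphi_{M+1}$ subordinate to $\{U_1,\ldots,U_M,X\setminus K_0\}$ with $\spt\varphi_{M+1}\subset X\setminus K_0$. Multiplying $\varphi_l$ into the first tensor slot, we have $\imath(\varphi_{M+1}\pi^n)=\imath(\varphi_{M+1}\pi)=0$ (their germ vanishes at every point of $X$), so by linearity of $f_\#$ and $\imath$, $f_\#\pi^n=\sum_{l=1}^M f_\#(\varphi_l\pi^n)$ and likewise for $\pi$; moreover $\varphi_l\pi^n\to\varphi_l\pi$ in $\Poly_c^k(X)$, now with a representation whose first factors are all supported in the fixed compact set $K:=\spt\varphi_l$ contained in the normal domain $U:=U_l$. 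It thus suffices to prove the claim under this additional assumption, keeping the notation $\pi^n,\pi$ and the symbols $C,\varepsilon_n$ for the rescaled data.

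By the proof of Proposition \ref{pushphi} there are a finite Borel partition $\mathcal E$ of $Y$ (the fiber-equivalence classes of $f|_U$, together with $Y\setminus f(U)$) and, for each $p\in f(U)$, a radius $r_p>0$ making $B_{r_p}(p)$ a geodesic spread neighborhood with respect to $\overline{U}$, such that for every $n$ the polylipschitz functions
\[
(A_f^{r_p}\pi^n)_p:=\sum_{x\in f^{-1}(p)\cap U}\frac{1}{i_f(x)^k}\,f_{U(x,r_p)\#}\big(\pi^n|_{U(x,r_p)^{k+1}}\big)\in\Poly^k(B_{r_p}(p)),
\]
together with the zero function off $f(K)$, represent $f_\#\pi^n$ and satisfy the overlap condition on each $B\in\mathcal E$. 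The point is that $\mathcal E$ and the radii $r_p$ depend only on $f|_U$ and not on the form, since the overlap argument in Proposition \ref{pushphi} uses only the preimage identities of Lemma \ref{claim2} stemming from fiber equivalence. Hence all the forms $f_\#\pi^n$ and $f_\#\pi$ are $\mathcal E$-continuous with respect to a common cover of $Y$ (which may be taken locally finite), so by the definition of convergence in $\Parcon(Y)$ from \cite[Section 6]{teripekka1} it remains to show that, for each $p\in f(U)$, the difference
\[
D_p^n:=(A_f^{r_p}\pi^n)_p-(A_f^{r_p}\pi)_p=\sum_{x\in f^{-1}(p)\cap U}\frac{1}{i_f(x)^k}\,f_{U(x,r_p)\#}\big((\pi^n-\pi)|_{U(x,r_p)^{k+1}}\big)
\]
converges to $0$ in $\Poly^k(B_{r_p}(p))$ with $L_k(D_p^n;B_{r_p}(p))$ bounded uniformly in $n$.

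Here Corollary \ref{prest} and Lemma \ref{locpush} supply the two required conditions. Using $\mu_f(U(x,r_p))=i_f(x)$, the monotonicity $L(g|_V)\le L(g)$ of the norms under restriction, and the fixed representation of $\pi^n-\pi$, the first inequality of Corollary \ref{prest} applied summandwise and the summation formula (\ref{summation}) give
\[
L_k(D_p^n;B_{r_p}(p))\le\sum_{x\in f^{-1}(p)\cap U}L^{k+1}i_f(x)\,L_k\big((\pi^n-\pi)|_{U(x,r_p)^{k+1}};U(x,r_p)\big)\le L^{k+1}\mu_f(U)\,C,
\]
a bound independent of $n$. For the smallness, the estimate $\|(f|_{U(x,r_p)})_\#g\|_\infty\le i_f(x)\|g\|_\infty$ of Lemma \ref{locpush}, applied to each tensor factor, shows that the infimum over representations of $\sum_j\|(\cdot)_0^j\|_\infty\cdots\|(\cdot)_k^j\|_\infty$ for the summand $f_{U(x,r_p)\#}\big((\pi^n-\pi)|_{U(x,r_p)^{k+1}}\big)$ is at most $i_f(x)^{k+1}\varepsilon_n$; hence the corresponding infimum for $D_p^n$ is at most $\mu_f(U)\,\varepsilon_n\to 0$. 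These are exactly the two conditions defining $D_p^n\to 0$ in $\Poly^k(B_{r_p}(p))$, so the proof is complete.

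I expect the only genuine difficulty to be the one addressed in the second paragraph: arranging one compact set, one cover by normal domains, one finite Borel partition $\mathcal E$ and one family of spread radii to serve the whole sequence at once. This is possible precisely because the data produced in the proof of Proposition \ref{pushphi} — the partition by fiber equivalence and the overlap identities — depend only on the branched cover $f|_U$ and not on the polylipschitz form being pushed forward; granted that uniformity, everything else is a routine transcription of Corollary \ref{prest}, Lemma \ref{locpush} and the description of convergence in $\Poly^k$ recalled from the proof of Lemma \ref{precont}.
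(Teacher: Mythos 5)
Your proof follows essentially the same route as the paper's: reduce via a Lipschitz partition of unity subordinate to a cover by normal domains, reuse the fiber-equivalence partition and spread neighborhoods from the proof of Proposition \ref{pushphi}, and verify convergence of the local representatives via the $L_k$-bound of Corollary \ref{prest} together with the sup-norm estimate of Lemma \ref{locpush}. The only cosmetic difference is that you re-derive inline what the paper delegates to Lemma \ref{precont}; the argument is correct.
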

\begin{proof}
	Let $K\subset X$ be a compact set containing $\spt\pi^n_0$ for each $n\in\N$. Let $\mathcal U=\{U_1,\ldots,U_M\}$ be an open cover of $K$ by normal neighborhoods, and let $\{\varphi_1,\ldots,\varphi_{M+1} \}$ be a Lipschitz partition of unity subordinate to $\{X\setminus K\}\cup \mathcal U$. It suffices to prove \[ f_\#(\varphi_l\smile\pi^n)\to f_\#(\varphi_l\smile\pi) \] in $\Parcon(X)$ for each $l=0,\ldots, M$.
	
	Fix $U=U_l$ and let $\mathcal E=\{E_1,\ldots,E_N\}$ be the equivalence classes of the fiber equivalence $\sim_{f|_U}$ associated to $f|_U$. Let $\{B_1,\ldots,B_Q\}$ be a finite cover of $\overline{fU}$ such that each $B_i=B_{r_i}(p_i)$ is a geodesic spread neighborhood of $f|_U$. For any $E=E_m(\mu_1,\ldots,\mu_m)\in \mathcal E$, we may write $f|_U\inv (p_i)=\{x_1,\ldots,x_m\}$, where $i_f(x_l)=\mu_l$; see the proof of Lemma \ref{claim3}. Set 
	\[
	\sigma^n=\varphi_U\smile(\pi^n-\pi).
	\]
	We have
	\begin{align*}
		A^{r_i}_f(\sigma^n) =\sum_{l=1}^m\frac{1}{i_f(x_l)^k}f_{U(x_l,r_i)\#}\sigma^n.
	\end{align*}
	Since $\sigma^n\to 0$ in $\Poly_U^k(X)$, and hence the restrictions converge in $\Poly^k(U(x_l,r_i))$, it follows from Lemma \ref{precont} that $\displaystyle f^i_\#(\sigma^n)|_{(E\cap B_i)^{k+1}}\to 0$ in $\Poly^k(E\cap B_i)$. Furthermore,
	\begin{align*}
		L_k(f^i_\#\sigma^n;E\cap B_i)\le \sum_{l=1}^m\frac{L^{k+1}\mu_f(U(x_l,r_i))}{\mu_l^k}L_k(\sigma^n;B_i)\le L^{k+1}\mu_f(U) \sup_{n\in\N} L_k(\sigma^n;U)
	\end{align*}
	for all $i\in\N$ and $E\in\mathcal E$. This shows that $f_\#(\varphi_U\smile\pi^n)\to f_\#(\varphi_U\smile\pi)$ in $\Parcon(X)$. The claim follows.
\end{proof}

We now define the pull-back of currents of locally finite mass. 

\begin{definition}\label{pull}
	Let $f:X\to Y$ be a BLD-map between two locally geodesic, oriented cohomology manifolds $X$ and $Y$, and let $T\in M_{k,\loc}(Y)$ be a $k$-current of locally finite mass on $Y$. The \emph{pullback} $f^\ast T\in \sD^k(X)$ of $T$ is the $k$-current \[ (\pi_0,\ldots,\pi_k)\mapsto \widehat T(f_\#(\pi_0\otimes\cdots\otimes\pi_k)) \] for every $(\pi_0,\ldots,\pi_k)\in \sD^k(X)$.
\end{definition}

\begin{proposition}\label{massest}
	Let $f:X\to Y$ be an $L$-BLD-map between locally geodesic, oriented cohomology manifolds, and let $T\in M_{k,\loc}(Y)$. Then $f^\ast T\in M_{k,\loc}(X)$ and 
	\[
	\|f^\ast T\|\le L^kf^\ast\|T\|. 
	\] 
\end{proposition}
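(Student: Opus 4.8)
The plan is to verify that the multilinear functional $f^\ast T$ from Definition~\ref{pull} obeys the mass inequality \eqref{mass} with respect to the Radon measure $\mu := L^k f^\ast\|T\|$. Since the mass measure of a current of locally finite mass is by definition the least Radon measure satisfying \eqref{mass}, this single estimate yields both $f^\ast T\in M_{k,\loc}(X)$ and the asserted bound $\|f^\ast T\|\le L^k f^\ast\|T\|$. Here $f^\ast\|T\|$ is the pull-back of the Radon measure $\|T\|$, i.e.\ the Radon measure on $X$ determined by $\int_X g\,\ud(f^\ast\|T\|)=\int_Y f_\# g\,\ud\|T\|$ for $g\in\LIP_c(X)$; this is well defined because $f$ has locally bounded multiplicity (on a normal domain $U$ one has $f_\#\chi_U\equiv\mu_f(U)$), so that $f_\# g$ is a bounded Borel function supported in the compact set $f(\spt g)$ and hence $\|T\|$-integrable.

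To obtain the estimate, fix $(\pi_0,\dots,\pi_k)\in\sD^k(X)$ and set $\pi:=\pi_0\otimes\cdots\otimes\pi_k\in\Poly_c^k(X)$ and $\omega:=\imath(\pi)\in\Gamma_c^k(X)$, so that $f^\ast T(\pi_0,\dots,\pi_k)=\widehat T(f_\#\omega)$ by Definition~\ref{pull}. First I would invoke Proposition~\ref{pushphi}: $f_\#\omega$ is $\mathcal E$-continuous for some finite Borel partition $\mathcal E$ of $Y$. Applying the integrability estimate \cite[Theorem~1.2]{teripekka1} to the current $T\lfloor B$ for each $B\in\mathcal E$ and summing (using $\|T\|=\sum_{B\in\mathcal E}\|T\|\lfloor B$) would give
\[
\bigl|f^\ast T(\pi_0,\dots,\pi_k)\bigr|=\bigl|\widehat T(f_\#\omega)\bigr|\le\int_Y\|f_\#\omega\|_y\,\ud\|T\|(y),
\]
where $\|f_\#\omega\|_y$ denotes the pointwise norm of $f_\#\omega|_B$ at $y\in B\in\mathcal E$. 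It then remains to bound the integrand. On the one hand, taking the one-term representation $\pi=\pi_0\otimes\cdots\otimes\pi_k$ in the definition of $\Lip_k(\,\cdot\,;B(x,r))$ and letting $r\to 0$ (using the continuity of $\pi_0$) yields the elementary pointwise bound $\|\omega\|_x\le|\pi_0(x)|\,\Lip(\pi_1)\cdots\Lip(\pi_k)$, whence, since $i_f\ge 1$,
\[
f_\#\|\omega\|(y)=\sum_{x\in f\inv(y)}i_f(x)\|\omega\|_x\le\Lip(\pi_1)\cdots\Lip(\pi_k)\,f_\#|\pi_0|(y),\qquad y\in Y.
\]
On the other hand, Proposition~\ref{basicpush}(2) gives $\|f_\#\omega\|_y\le L^k f_\#\|\omega\|(y)$ pointwise on $Y$. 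Combining the three displays with the defining property of $f^\ast\|T\|$ gives
\[
\bigl|f^\ast T(\pi_0,\dots,\pi_k)\bigr|\le L^k\Lip(\pi_1)\cdots\Lip(\pi_k)\int_Y f_\#|\pi_0|\,\ud\|T\|=L^k\Lip(\pi_1)\cdots\Lip(\pi_k)\int_X|\pi_0|\,\ud(f^\ast\|T\|),
\]
which is exactly \eqref{mass} for $f^\ast T$ with $\mu=L^k f^\ast\|T\|$, finishing the argument.

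The main obstacle I anticipate is the first step: the integrability estimate \cite[Theorem~1.2]{teripekka1} is stated for \emph{continuous} polylipschitz forms, whereas $f_\#\omega$ is only partition-continuous, so one must check that $\widehat T$ decomposes compatibly over a countable Borel partition and that the associated piecewise pointwise norm $y\mapsto\|f_\#\omega\|_y$ is Borel measurable. Both are routine consequences of the construction of $\widehat T$ in \cite{teripekka1}, and everything after that is bookkeeping with the estimates from Proposition~\ref{basicpush} (and Lemma~\ref{locpush}) already established above.
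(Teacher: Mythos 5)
Your mass estimate follows the paper's proof almost verbatim: both arguments bound $|\widehat T(f_\#\omega)|\le\sum_i\int_{E_i}\|f_\#\omega|_{E_i}\|_y\,\ud\|T\|(y)$ over the finite Borel partition supplied by Proposition \ref{pushphi}, then insert the pointwise bound $\|f_\#\omega\|_y\le L^k f_\#\|\omega\|(y)$ from Proposition \ref{basicpush}(2) and unwind the definition of $f^\ast\|T\|$. Your extra step of estimating $\|\omega\|_x\le|\pi_0(x)|\Lip(\pi_1)\cdots\Lip(\pi_k)$ so as to land exactly on \eqref{mass} is a harmless (and slightly more explicit) elaboration of the paper's final line, and the issue you flag about applying the integrability estimate to a merely partition-continuous form is precisely what the paper's citation of \cite[Proposition 6.7]{teripekka1} handles, so that is not a real obstacle.

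There is, however, one genuine omission. You assert that the single estimate \eqref{mass} ``yields both $f^\ast T\in M_{k,\loc}(X)$ and the asserted bound.'' It does not: membership in $M_{k,\loc}(X)$ requires $f^\ast T$ to be a metric current in the first place, and in particular to satisfy the continuity axiom --- if $\pi^n\to\pi$ in $\sD^k(X)$ then $f^\ast T(\pi^n)\to f^\ast T(\pi)$. The mass bound controls convergence in the $\pi_0$ slot, but it says nothing about pointwise convergence of $\pi_1,\ldots,\pi_k$ with uniformly bounded Lipschitz constants, which is the nontrivial part of axiom (1). This is exactly why the paper opens its proof by invoking Proposition \ref{pushcont} (sequential continuity of $\pi\mapsto f_\#\pi$ into $\Parcon(Y)$) together with the sequential continuity of $\widehat T$ from Theorem \ref{ext}. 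The fix is a single sentence, but as written your argument does not establish that $f^\ast T$ is a current.
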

\begin{proof}
	By Lemma \ref{pushcont} and Theorem \ref{ext}, $f^\ast T$ is sequentially continuous. Let $(\pi_0,\ldots,\pi_k)\in \sD^k(X)$ and $\pi=\pi_0\otimes\cdots\otimes\pi_k$. Suppose $E_1,\ldots,E_N$ is a Borel partition of $Y$ for which $$\displaystyle f_\#\pi|_{E_i}\in\Gamma^k(E_i)$$ for each $i=1,\ldots,N$. By \cite[Proposition 6.7]{teripekka1} and  Lemma \ref{basicpush}, we may estimate
	\begin{align*}
		|f^\ast T(\pi)|=&|\widehat T(f_\#\pi)|\le \sum_{i=1}^N\int_{E_i}\|f_\#\pi|_{E_i}\|_p\ud\|T\|(p)
		\le \sum_{i=1}^N\int_{E_i}L^kf_\#\|\pi\|(p)\ud\|T\|(p) \\
		= &L^k\int_Yf_\#\|\pi\|\ud\|T\|
		=L^k\int_X\|\pi\|\ud(f^\ast\|T\|).
	\end{align*}
	This proves $f^\ast T$ is a $k$-current of locally finite mass and provides the desired estimate.
\end{proof}

\begin{proof}[Proof of Theorem \ref{thm: locpull}]
	Properties (1) and (2) follow directly from the corresponding properties (3) and (4) in Proposition \ref{basicpush}. Indeed, let $T\in M_{k,\loc}(Y)$. Then
	\[
	\begin{split}
	f^\ast(\partial T)(\pi)&=\widehat{\partial T}(f_\#\pi)=\partial\widehat T(f_\#\pi)=\widehat T(df_\#\pi)=\widehat T(f_\#(d\pi))\\
	&=\partial\big(\widetilde T\circ f_\#\big)(\pi)=\partial(f^\ast T)(\pi)
	\end{split}
	\]
	and 
	\[ 
	f_\ast((f^\ast T)|_E)(\sigma)=f^\ast T(\chi_E\smile f^\#\sigma)=\widehat T(f_\#(\chi_E\smile f^\#\sigma))=\widehat T(f_\#\chi_E\smile \sigma), 
	\] 
	for all $\pi\in \sD^k(X)$, $\sigma\in \sD^k(Y)$ and Borel set $E\subset X$.
	
	To establish (3), let $E\subset X$ be a Borel set. We use (2) together with \cite[Lemma 4.6]{lan11} to conclude that \[ \|T\|\lfloor_{f_\#\chi_E}=\|T\lfloor_{f_\#\chi_E}\|=\|f_\ast((f^\ast T)\lfloor_E)\|\le L^kf_\ast(\|f^\ast T\|\lfloor_E). \] Thus
	\begin{align*}
		\int_E\ud(f^\ast\|T\|)=\int_Yf_\#\chi_E\ud\|T\|\le L^k\int_X\chi_E\ud\|f^\ast T\|=L^k\int_E\ud \|f^\ast T\|.
	\end{align*}
	Hence $f^\ast\|T\|(E)\le L^k \|f^\ast T\|(E)$ for all Borel sets $E\subset X$. We conclude that 
	\[
	f^\ast\|T\|\le L^k \|f^\ast T\|. 
	\] 
	The converse inequality is proven in Proposition \ref{massest}. Naturality is proven in the next subsection, Proposition \ref{naturality}.
\end{proof}

\subsection{Uniqueness and naturality of the pull-back} We now turn our attention to the uniqueness and naturality of the pull-back. This section also contains the proof of Theorem \ref{unique}.

\begin{proposition}\label{preunique}
	Let $f:X\to Y$ be a BLD-map between locally geodesic, oriented cohomology manifolds. Let $T\in M_{k,loc}(X)$ be a $k$-current for which $$f_\ast(T\lfloor_E)=0$$ for every precompact Borel set $E\subset X$. Then $T=0$. 
\end{proposition}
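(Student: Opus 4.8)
The plan is to show $\|T\|=0$; this suffices, since the mass bound \eqref{mass} gives $|T(\pi_0,\dots,\pi_k)|\le\Lip(\pi_1)\cdots\Lip(\pi_k)\int_X|\pi_0|\,\ud\|T\|$, so a locally finite mass current with vanishing mass measure is zero. As $X$ is separable and locally compact, it is exhausted by countably many precompact open sets, so by countable subadditivity of $\|T\|$ it is enough to find, around every point, a precompact open neighbourhood $W$ with $T\lfloor W=0$. I would first reformulate the hypothesis: for fixed $\sigma_0\in\LIP_c(Y)$ and $\sigma_1,\dots,\sigma_k\in\LIP_\infty(Y)$, the set function $E\mapsto (T\lfloor E)(\sigma_0\circ f,\dots,\sigma_k\circ f)$ is a finite signed Borel measure, absolutely continuous with respect to $\|T\|$; by assumption it vanishes on every precompact Borel set, hence identically. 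Choosing $\sigma_0$ equal to $1$ near the compact set $f(\spt h)$, this yields
\[
T(h,\sigma_1\circ f,\dots,\sigma_k\circ f)=0
\]
for every bounded Borel $h$ with precompact support and all $\sigma_1,\dots,\sigma_k\in\LIP_\infty(Y)$.

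Off the branch set the conclusion is immediate. If $x\in X\setminus B_f$, then $f$ is a local homeomorphism at $x$, so Lemma \ref{spread}(a) provides a normal neighbourhood $U$ of $x$ with $f|_U$ injective, and \eqref{BLD} together with local geodesicity and path-lifting makes $f|_U\colon U\to fU$ an $L$-bilipschitz homeomorphism. Given $(\pi_0,\dots,\pi_k)\in\sD^k(X)$, McShane-extend each $\pi_i\circ(f|_U)^{-1}\in\LIP_\infty(fU)$ to $\sigma_i\in\LIP_\infty(Y)$; then $\sigma_i\circ f=\pi_i$ on $U$, so by the locality axiom and the reformulated hypothesis $T(\phi_0,\pi_1,\dots,\pi_k)=T(\phi_0,\sigma_1\circ f,\dots,\sigma_k\circ f)=0$ for every Lipschitz $\phi_0$ supported in $U$. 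Since the mass bound controls the first argument only through its $L^1(\|T\|)$-norm, approximating $\chi_U\pi_0$ by such $\phi_0$ gives $T\lfloor U=0$, hence $\|T\|(U)=0$. Covering $X\setminus B_f$ by countably many such $U$ shows that $\|T\|$ is concentrated on $B_f$.

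It remains to prove $\|T\|(B_f)=0$, and this is the crux. The hypothesis passes to $T\lfloor A$ for every Borel $A$ (as $(T\lfloor A)\lfloor E=T\lfloor(A\cap E)$), and, since $f$ is discrete, $B_f$ splits as a countable disjoint union $\bigsqcup_j A_j$ of Borel sets with $f|_{A_j}$ injective (choosing sheets measurably on normal neighbourhoods). Thus it suffices to show: if $S\in M_{k,\loc}(X)$ is concentrated on a Borel set $A$ with $f|_A$ injective and $f_\ast(S\lfloor E)=0$ for all precompact Borel $E$, then $S=0$. My plan for this is to iterate the mechanism of the previous paragraph \emph{inside} $B_f$: peel off the relatively open subset of $B_f$ on which $f$ restricts to a homeomorphism onto its image in the ambient sense, handled exactly as above, and use the monodromy representation \eqref{monodromy} of $f$ near a branch point to organize the recursion (equivalently, induct on the order of the local monodromy group, the bilipschitz case being the base), forcing $\|T\|$ onto successively thinner closed sets of topological dimension decreasing from $n-2$, and hence onto $\varnothing$. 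The main obstacle is precisely here, because the naive statement ``$f_\ast(S\lfloor A)=0$ with $f|_A$ injective implies $S=0$'' is \emph{false} for currents: near $B_f$ the map $f$ is far from bilipschitz (tangential directions collapse), so one must genuinely use both that $\|T\|$ is already known to live on the thin set $B_f$ and the full strength of the hypothesis over \emph{all} precompact Borel $E$, together with the recursive structure of branched covers.
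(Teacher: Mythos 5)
Your reduction to $\|T\|(B_f)=0$ is carried out correctly, but the argument stops exactly where the real work begins: the recursion you sketch on the branch set (peeling off sets where $f|_{B_f}$ is a homeomorphism, inducting via the monodromy representation on sets of decreasing topological dimension) is not actually carried out, and as you yourself note, the statement you would need at each stage --- that injectivity of $f|_A$ plus $f_\ast(S\lfloor E)=0$ forces $S=0$ --- is not available. So as written this is a genuine gap, not a complete proof.

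The idea you are missing is that the correct decomposition is not by sheets or by strata of $B_f$ but by \emph{level sets of the local index}, and that on these level sets the map is not merely injective but honestly $L$-bilipschitz. This is the paper's Lemma \ref{restbilip}: for a normal neighborhood $U(x,2Lr)$ and $E_m=U(x,2Lr)\cap i_f\inv(m)$, if $y\in E_m$ had a second preimage point of $f(y)$ inside $U(x,2Lr)$, the summation formula \eqref{summation} would give $i_f(x)=\sum_{x'\in f\inv(f(y))\cap U(x,2Lr)}i_f(x')>i_f(y)=i_f(x)$, a contradiction; hence $f\inv(f(y))\cap U(x,2Lr)=\{y\}$. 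This singleton-preimage property (in the \emph{whole} normal neighborhood, not just in $E_m$) is what lets you lift a geodesic between image points and know that the lift terminates at the right preimage, so the lower bound in \eqref{BLD} yields $d(z,w)\le L\,d(f(z),f(w))$ for $z,w\in E_m\cap U(x,r)$. Tangential collapse, which you correctly identify as the obstruction for a merely injective restriction, simply does not occur here. With this in hand the proof is uniform over all of $X$ and needs no preliminary concentration of $\|T\|$ on $B_f$ and no induction: for any $x$, write $B_r(x)=\bigcup_{m=1}^{i_f(x)}(E_m\cap B_r(x))$ and conclude
\[
T\lfloor_{E_m\cap B_r(x)}=\big(f|_{E_m\cap B_r(x)}\big)\inv_\ast\, f_\ast\big(T\lfloor_{E_m\cap B_r(x)}\big)=0
\]
for each $m$, whence $T\lfloor_{B_r(x)}=\sum_m T\lfloor_{E_m\cap B_r(x)}=0$. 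Your treatment of points off $B_f$ is the special case $i_f(x)=1$ of this argument.
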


We begin with an auxiliary lemma on BLD-maps.
\begin{lemma}\label{restbilip}
	Let $f:X\to Y$ be a $L$-BLD-map between locally geodesic, oriented cohomology manifolds, and let $U\subset X$ be a normal domain for $f$. For each $m\in\N$, let $$E_m=U\cap i_f\inv(m).$$ Then $f|_{E_m}$ is locally $L$-bilipschitz.
\end{lemma}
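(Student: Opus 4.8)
The plan is to localize at a point $x_0\in E_m$, use the summation formula \eqref{summation} for the local index to see that $f$ is injective on a small piece of $i_f^{-1}(m)$ around $x_0$, and then upgrade this injectivity to an $L$-bilipschitz estimate by lifting geodesics. First, by Lemma~\ref{spread} I would pick $\rho>0$ small enough that $W:=U_f(x_0,\rho)$ is a normal domain for $f$ and a normal neighborhood of $x_0$. Then $fW$ is a connected open set, $f(\partial W)\cap fW=\varnothing$ (since $W$ is a component of $f^{-1}(B_\rho(f(x_0)))$), and $\mu_f(W,f(x_0))=i_f(x_0)=m$, so by local constancy of the local degree $\mu_f(W,\cdot)\equiv m$ on $fW$. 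Since $i_f\ge 1$ everywhere, for any $y\in W$ with $i_f(y)=m$ the summation formula
\[
m=\mu_f(W,f(y))=\sum_{x'\in f^{-1}(f(y))\cap W}i_f(x')
\]
forces $f^{-1}(f(y))\cap W=\{y\}$; in particular $f$ is injective on $W\cap i_f^{-1}(m)\supseteq W\cap E_m$.

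Next I would shrink once more, to an open neighborhood $N\ni x_0$ with $N\subseteq W$, contained in a geodesic neighborhood of $x_0$ in $X$, and with $f(N)$ contained in a geodesic neighborhood of $f(x_0)$ in $Y$ and of diameter small enough — which is possible since $f$ is locally $L$-Lipschitz by \eqref{BLD} — that every geodesic in $Y$ joining two points of $f(N)$ stays inside $B_\rho(f(x_0))$. I claim $f|_{N\cap E_m}$ is $L$-bilipschitz, which proves the lemma, since $N\cap E_m$ is a neighborhood of $x_0$ in $E_m$. For $x,y\in N\cap E_m$, a geodesic $\alpha\colon x\curvearrowright y$ in $X$ gives $d(f(x),f(y))\le\ell(f\circ\alpha)\le L\,d(x,y)$ by the upper bound in \eqref{BLD}. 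For the reverse inequality, take a geodesic $\beta\colon f(x)\curvearrowright f(y)$ in $Y$, which by the choice of $N$ lies in $B_\rho(f(x_0))$; by the path-lifting property of proper branched covers applied to $f|_W\colon W\to fW$ (cf.\ the lifts of geodesics used in the proof of Lemma~\ref{locpush}), $\beta$ admits a lift $\tilde\beta\colon x\curvearrowright z$ with $f\circ\tilde\beta=\beta$. As the image of $\tilde\beta$ is connected, contained in $f^{-1}(B_\rho(f(x_0)))$, and contains the point $x\in W$, it lies in the component $W$ of $f^{-1}(B_\rho(f(x_0)))$; hence $z\in W$ and $f(z)=f(y)$, so $z=y$ by the injectivity above, and the lower bound in \eqref{BLD} gives $d(x,y)\le\ell(\tilde\beta)\le L\,\ell(\beta)=L\,d(f(x),f(y))$.

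I expect the main obstacle to be organizing the geodesic lift so that it is both a complete lift of $\beta$ and confined to $W$. Confinement is automatic once $\beta$ does not escape $B_\rho(f(x_0))$, which is exactly what the final shrinking to $N$ arranges; completeness of the lift is the standard complete path-lifting property of proper branched covers, here applied to $f|_W$ (using that $W$ is precompact and $f|_W$ is proper). Granting this, the identification $z=y$ and the two length bounds are routine consequences of the summation formula and \eqref{BLD}.
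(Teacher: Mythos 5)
Your proposal is correct and follows essentially the same route as the paper: injectivity of $f$ on $W\cap i_f^{-1}(m)$ from the summation formula \eqref{summation} together with $i_f\ge 1$, the upper bound from the right-hand inequality in \eqref{BLD}, and the lower bound by lifting a geodesic of $Y$ and using injectivity to identify the endpoint. The only (immaterial) difference is how you confine the lift to the normal neighborhood — you shrink the domain $N$ so that geodesics between image points stay in $B_\rho(f(x_0))$, whereas the paper keeps the points in $U(x,r)$ and enlarges the normal neighborhood to $U(x,2Lr)$ to absorb the lift.
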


\begin{proof}
	Let $x\in E_m$ and $r>0$ be such that $U(x,2Lr)\subset U$ is a normal domain for $x$. We claim that $f|_{E_m\cap U(x,r)}$ is $L$-bilipschitz.
	
	First we show that $f|_{E_m\cap U(x,2Lr)}$ is injective. Let $y\in E_m\cap U(x,2Lr).$ Then $m=i_f(y)=i_f(x)$. Suppose $f\inv(f(y))\cap U(x,2Lr)\ne \{ y\}$. Then, by  the summation formula (\ref{summation}), we have
	\begin{align*}
		i_f(x)=\mu_f(U(x,2rL),x)=\mu_f(y,U(x,2rL))=\sum_{x'\in f\inv(f(y))\cap U(x,2Lr)}i_f(x')>i_f(y),
	\end{align*}
	since $i_f(x')\ge 1$ for each $x'\in X$. This is a contradiction, whence $$ f\inv(f(y))\cap U(x,2Lr)=\{y\} $$ and $f|_{E_m\cap U(x,2Lr)}$ is injective.
	
	The fact that $f|_{E_m\cap U(x,r)}$ is $L$-Lipschitz is clear. Moreover the proof of injectivity shows that
	\[
	E_m\cap U(x,r)=f\inv(f(E_m\cap U(x,r))).
	\]
	Let $z',w'\in f(E_m\cap U(x,r))$, and let $z,w\in E_m\cap U(x,r)$ satisfy $z'=f(z)$ and $w'=f(w)$. Suppose $\gamma$ is a geodesic joining $z'$ and $w'$ in $B_{2r}(f(x))$. Since $f\inv(w')\cap U(x,r)=\{w\}$, we have that a lift $\gamma'$ in $U(x,2Lr)$ of $\gamma$ starting at $z$ ends at $w$. Thus \[ d(z,w)\le \ell(\gamma')\le L\ell(f\circ\gamma)=L\ell(\gamma)=Ld(z',w'). \] This finishes the proof of the claim.
\end{proof}

\begin{proof}[Proof of Proposition \ref{preunique}]
	Let $x\in X$. If $x\notin B_f$, let $r>0$ be such that $f|_{B_r(x)}$ is bilipschitz. Then 
	\[
	T\lfloor_{B_r(x)}=(f|_{B_r(x)}\inv)_\ast (f|_{B_r(x)})_\ast(T\lfloor_{B_r(x)})=(f|_{B_r(x)}\inv)_\ast f_\ast(T\lfloor_{B_r(x)})=0.
	\]
	
	Suppose now that $x\in B_f$. Let $r>0$ be a radius for which $U(x,Lr)$ is a normal neighborhood of $x$ and for which $f|_{U(x,r)\cap i_f\inv(m)}$ is $L$-bilipschitz. Then 
	\[
	B_r(x)=\bigcup_{m=1}^{i_f(x)}(E_m\cap B_r(x)),
	\]
	where $E_m=U(x,Lr)\cap i_f\inv(m)$. Let $m\le i_f(x)$. By the same argument as above 
	\[ 
	T\lfloor_{E_m\cap B_r(x)}=(f|_{E_m\cap B_r(x)})\inv f_\ast(T\lfloor_{E_m\cap B_r(x)})=0. 
	\]  
	Thus 
	\[ 
	T\lfloor_{B_r(x)}=\sum_{m=1}^{i_f(x)}T\lfloor_{E_m\cap B_r(x)}=0. 
	\] 
	We have proven that, for each $x\in X$, there exists a radius $r>0$ such that 
	\[
	T\lfloor_{B_r(x)}=0.
	\]
	Thus $T=0$.
\end{proof}

\begin{proof}[Proof of Theorem \ref{unique}]
	By Theorem \ref{thm: locpull} (1) and the assumption on $S$, we have \[ f_\ast[(f^\ast T-S)\lfloor E]=0 \] for every precompact Borel set $E\subset X$. Proposition \ref{preunique} implies that \[ f^\ast T-S=0, \] completing the proof.
\end{proof}

We now prove the naturality of the pull-back. The first auxiliary result is the naturality of the push-forward of polylipschitz forms.

\begin{proposition}\label{natpush}
	Let $f:X\to Y$ and $g:Y\to Z$ be BLD-maps between locally geodesic, oriented cohomology manifolds $X,Y$ and $Z$. Let $(\pi_0,\ldots,\pi_k)\in\sD^k(X)$ and  $\pi=\pi_0\otimes\cdots\otimes\pi_k\in \Poly_c^k(X)$. Then \[ g_\#(f_\#\pi)=(g\circ f)_\#\pi \] as sections in $\G_c^k(Z)$.
\end{proposition}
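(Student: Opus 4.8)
The plan is to reduce the claim to a pointwise identity for the \emph{local averaging maps} of Definition \ref{af}. Recall these are maps on stalks, $A_f\colon\Polys^k_x(X)\to\Polys^k_{f(x)}(Y)$, $A_g\colon\Polys^k_y(Y)\to\Polys^k_{g(y)}(Z)$, $A_{g\circ f}\colon\Polys^k_x(X)\to\Polys^k_{g(f(x))}(Z)$, and that each is linear (Remark \ref{linearity}). The key step is the identity
\[
A_g\circ A_f=A_{g\circ f}\qquad\text{on }\Polys^k_x(X)\text{ for every }x\in X.
\]
Granting this, fix $z\in Z$. Using $(g\circ f)\inv(z)=\bigcup_{y\in g\inv(z)}f\inv(y)$, the chain rule $i_{g\circ f}(x)=i_g(f(x))i_f(x)$ from \eqref{eq:indexcomp}, the definition of the push-forward (Definition \ref{push}), and the linearity of $A_g$ on the stalk $\Polys^k_y(Y)$, we compute
\[
\begin{split}
g_\#(f_\#\omega)(z)
&=\sum_{y\in g\inv(z)}i_g(y)A_g\Big(\sum_{x\in f\inv(y)}i_f(x)A_f(\omega(x))\Big)
=\sum_{y\in g\inv(z)}\sum_{x\in f\inv(y)}i_g(y)i_f(x)\,(A_g\circ A_f)(\omega(x))\\
&=\sum_{x\in(g\circ f)\inv(z)}i_{g\circ f}(x)A_{g\circ f}(\omega(x))=(g\circ f)_\#\omega(z),
\end{split}
\]
where $\omega=\imath(\pi)$, and the sums are finite because $\omega$ has compact support. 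This gives $g_\#(f_\#\omega)=(g\circ f)_\#\omega$ in $\G_c^k(Z)$.

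To prove the stalk identity, fix $x\in X$ and a germ $[\pi]_x\in\Polys^k_x(X)$ represented by $\pi\in\Poly^k(O)$ on a neighborhood $O$ of $x$. Choose a normal neighborhood $U\subset O$ of $x$ for $f$; a normal neighborhood $V$ of $f(x)$ for $g$ with $\overline V\subset f(U)$; and let $U'$ be the component of $f\inv(V)\cap U$ containing $x$. By standard properties of branched covers (cf.\ Lemma \ref{spread} and \cite{ric93,vai66}), $U'$ is a normal domain for $f$ with $f(U')=V$, hence $f(\overline{U'})=\overline V$ and $f(\partial U')=\partial V$; since $\overline{U'}\subset\overline U$ and $f\inv(f(x))\cap\overline U=\{x\}$, the set $U'$ is a normal neighborhood of $x$ for $f$. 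Furthermore, $(g\circ f)(U')=g(V)$ and $(g\circ f)(\partial U')=g(\partial V)=\partial(g(V))$ because $V$ is a normal domain for $g$, so $U'$ is a normal domain for $g\circ f$; and if $w\in(g\circ f)\inv(g(f(x)))\cap\overline{U'}$, then $f(w)\in g\inv(g(f(x)))\cap\overline V=\{f(x)\}$, whence $w\in f\inv(f(x))\cap\overline{U'}=\{x\}$. Thus $U'$ is simultaneously a normal neighborhood of $x$ for $f$ and for $g\circ f$.

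With these choices, using the well-definedness of $A_f$ (Lemma \ref{wellposed}) and Lemma \ref{prenat} applied with the normal domain $U$ for $f$, the normal domain $V\subset f(U)$ for $g$, and the component $U'$,
\[
A_g(A_f([\pi]_x))=\frac{1}{i_f(x)^{k+1}}A_g\big([f_{U'\#}\pi]_{f(x)}\big)
=\frac{[g_{V\#}(f_{U'\#}\pi)]_{g(f(x))}}{i_f(x)^{k+1}i_g(f(x))^{k+1}}
=\frac{[(g\circ f)_{U'\#}\pi]_{g(f(x))}}{i_f(x)^{k+1}i_g(f(x))^{k+1}},
\]
where we used that $f_{U'\#}\pi\in\Poly^k(V)$ and that $V$ is a normal neighborhood of $f(x)$ for $g$. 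On the other hand, since $U'$ is a normal neighborhood of $x$ for $g\circ f$ and $i_{g\circ f}(x)^{k+1}=i_g(f(x))^{k+1}i_f(x)^{k+1}$ by \eqref{eq:indexcomp},
\[
A_{g\circ f}([\pi]_x)=\frac{[(g\circ f)_{U'\#}\pi]_{g(f(x))}}{i_{g\circ f}(x)^{k+1}}
=\frac{[(g\circ f)_{U'\#}\pi]_{g(f(x))}}{i_f(x)^{k+1}i_g(f(x))^{k+1}}=A_g(A_f([\pi]_x)),
\]
as required.

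The main obstacle is the compatible choice of normal neighborhoods in the third paragraph: one needs a single normal domain $U'$ serving as a normal neighborhood of $x$ for both $f$ and $g\circ f$, so that it can be used uniformly in $A_f$, $A_g$ and $A_{g\circ f}$ and so that Lemma \ref{prenat} applies; this rests on the standard facts $f(U')=V$ and $f(\overline{U'})=\overline V$. Once these are set up, the rest is a direct unwinding of the definitions via Lemma \ref{prenat}, the well-definedness of the averaging maps, and the chain rule \eqref{eq:indexcomp}. We remark that no continuity of the partition-continuous section $f_\#\omega$ is used: the push-forward $g_\#$ is defined on all of $\G_c^k(Y)$, and the identity is checked pointwise on $Z$.
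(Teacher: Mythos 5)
Your proof is correct and follows essentially the same route as the paper's: both arguments rest on Lemma \ref{prenat}, the chain rule \eqref{eq:indexcomp}, and the decomposition of $(g\circ f)^{-1}(z)$ into the fibers $f^{-1}(y)$, $y\in g^{-1}(z)$. The only difference is organizational: you isolate the stalk-level identity $A_g\circ A_f=A_{g\circ f}$ and check that a single component $U'$ of $f^{-1}(V)\cap U$ is a normal neighborhood of $x$ for both $f$ and $g\circ f$, whereas the paper runs the same computation with explicit representatives $(A^r_f\pi)_y$ on a common geodesic spread neighborhood $B_r(q)$.
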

\begin{proof}
	The composition $g\circ f:X\to Z$ is a BLD-map. Denote by $L$ be the maximum of the BLD constans of $f$ and $g$. Let $(\pi_0,\ldots,\pi_k)\in\sD^k(X)$ and  $\pi=\pi_0\otimes\cdots\otimes\pi_k\in \Poly_c^k(X)$. Denote $\omega=f_\#\pi\in \G_c^k(Y)$, and $\spt\pi=K$. Let $q\in Z$ and fix $r>0$ for which $B_r(q)$ is a geodesic spread neighborhood for $g\circ f$ with respect to $K$, and $U_{g}(y,r)$ is a geodesic spread neighborhood of $y$, for each $y\in g\inv(q)\cap f(K)$, with respect to $K$.
	Since  \[(g\circ f)\inv(B_r(q))=\bigcup_{y\in g\inv(q)}f\inv(U_g(y,r))=\bigcup_{x\in (g\circ f)\inv(q)}U_{g\circ f}(x,r), \] we have that, for each $y\in g\inv(q)\cap f(K)$, the set  $f\inv(U_g(y,r))$ is a pairwise disjoint union \[ f\inv(U_g(y,r))=\bigcup_{x\in f\inv(y)}U_{g\circ f}(x,r). \]
	Thus, each $U_{g\circ f}(x,r)$ is a normal neighborhood of $x$ with respect to $f$. For each $y\in g\inv(q)\cap f(K)$,  let
	\[ 
	\sigma_y=(A_f^r\pi)_y=\sum_{x\in f\inv(y)\cap K}\frac{1}{i_f(x)^k}f_{U_{g\circ f}(x,r)\#}\pi; 
	\] 
	see the discussion after Definition \ref{push}.  Then $g_\#(f_\#\pi)(q)=[A^r_g\omega_q]_q$, where 
	\[ 
	A^r_g\omega_q=\sum_{y\in g\inv(q)\cap f(K)}\frac{1}{i_g(y)^k}g_{U_g(y,r)\#}\sigma_y.
	\]

	For each $y\in f\inv(q)\cap f(K)$ we have, by Lemma \ref{prenat}, that
	\begin{align*}
		g_{U_g(y,r)\#}\omega_y&=\sum_{x\in f\inv(y)\cap K}\frac{1}{i_f(x)^k}g_{U_g(y,r)\#}f_{U_{g\circ f}(x,r)\#}\pi\\
		&= \sum_{x\in f\inv(y)\cap K}\frac{1}{i_f(x)^k}(g\circ f)_{U_{g\circ f}(x,r)\#}\pi.
	\end{align*}
	Thus we have
	\begin{align*}
		A^r_g\omega_q&=\sum_{y\in g\inv(q)\cap f(K)}\sum_{x\in f\inv(y)\cap K}\frac{1}{i_g(y)^ki_f(x)^k}(g\circ f)_{U_{g\circ f}(x,r)\#}\pi\\
		&=\sum_{y\in g\inv(q)\cap f(K)}\sum_{x\in f\inv(y)\cap K}\frac{1}{i_{g\circ f}(x)^k}(g\circ f)_{U_{g\circ f}(x,r)\#}\pi\\
		&=\sum_{x\in g\circ f\inv(q)\cap K}\frac{1}{i_{g\circ f}(x)^k}(g\circ f)_{U_{g\circ f}(x,r)\#}\pi	
		=A_{g\circ f}^r\pi
	\end{align*}
	on $B_r(q)^{k+1}$. From this we conclude that \[ g_\#(f_\#\pi)(q)=(g\circ f)_\#\pi(q) \] for all $q\in Z$. 
\end{proof}

Proposition \ref{natpush} yields the naturality of the push-forward $f_\#:\G_c^k(X)\to \G_c^k(Y)$ for a BLD map $f:X\to Y$. We use this and Theorem \ref{unique} to conclude the naturality of the pull-back of metric currents.

\begin{proposition}\label{naturality}
	Let $f:X\to Y$ and $g:Y\to Z$ be BLD-maps between locally geodesic, oriented cohomology manifolds. Then \[ (g\circ f)^\ast= f^\ast\circ g^\ast \] as maps $M_{k,loc}(Z)\to M_{k,loc}(X)$.
\end{proposition}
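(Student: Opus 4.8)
The plan is to reduce the composition rule to the uniqueness principle of Proposition~\ref{preunique}, applied to the composed map $g\circ f\colon X\to Z$. As already noted in the proof of Proposition~\ref{natpush}, $g\circ f$ is again a BLD-map between locally geodesic oriented cohomology manifolds, so $(g\circ f)^\ast$ is defined, and by Theorem~\ref{thm: locpull} the currents $g^\ast T\in M_{k,\loc}(Y)$, $f^\ast(g^\ast T)\in M_{k,\loc}(X)$ and $(g\circ f)^\ast T\in M_{k,\loc}(X)$ all make sense for $T\in M_{k,\loc}(Z)$. Setting $S:=f^\ast(g^\ast T)-(g\circ f)^\ast T\in M_{k,\loc}(X)$, Proposition~\ref{preunique} (for the BLD-map $g\circ f$) reduces the assertion to proving
\[
(g\circ f)_\ast\bigl(S\lfloor E\bigr)=0\qquad\text{for every precompact Borel set }E\subset X.
\]

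To verify this, observe that every current below is compactly supported and of finite mass, so its push-forward under $f$, $g$, or $g\circ f$ is defined in the usual way and $(g\circ f)_\ast=g_\ast\circ f_\ast$. Theorem~\ref{thm: locpull}(1) for $g\circ f$ gives $(g\circ f)_\ast\bigl(((g\circ f)^\ast T)\lfloor E\bigr)=T\lfloor(g\circ f)_\#\chi_E$, and the chain rule~\eqref{eq:indexcomp} for the local index gives the pointwise identity $(g\circ f)_\#\chi_E=g_\#(f_\#\chi_E)$ (the computation in the proof of Lemma~\ref{compopush}, valid for the compactly supported function $\chi_E$ without properness). On the other hand, Theorem~\ref{thm: locpull}(1) for $f$ and the current $g^\ast T$ gives $f_\ast\bigl((f^\ast(g^\ast T))\lfloor E\bigr)=(g^\ast T)\lfloor f_\#\chi_E$; writing the bounded, compactly supported, $\Z_{\ge 0}$-valued Borel function $h:=f_\#\chi_E$ as a finite sum $h=\sum_{c\ge 1}\chi_{\{h\ge c\}}$ of characteristic functions of precompact Borel sets and using linearity of the restriction and of the push-forward of functions together with Theorem~\ref{thm: locpull}(1) for $g$ on each $\chi_{\{h\ge c\}}$, one obtains $g_\ast\bigl((g^\ast T)\lfloor h\bigr)=T\lfloor g_\#h$. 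Putting these together, $(g\circ f)_\ast\bigl((f^\ast(g^\ast T))\lfloor E\bigr)=g_\ast\bigl((g^\ast T)\lfloor f_\#\chi_E\bigr)=T\lfloor g_\#(f_\#\chi_E)=T\lfloor(g\circ f)_\#\chi_E=(g\circ f)_\ast\bigl(((g\circ f)^\ast T)\lfloor E\bigr)$, hence $(g\circ f)_\ast(S\lfloor E)=0$ and Proposition~\ref{preunique} yields $S=0$, i.e.\ $f^\ast\circ g^\ast=(g\circ f)^\ast$.

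I expect the obstacles to be bookkeeping rather than conceptual: the main one is well-definedness and functoriality of the current push-forwards when $f$ and $g$ are not assumed proper, which is handled by noting that only compactly supported finite-mass currents are ever pushed forward, and the extension of Theorem~\ref{thm: locpull}(1) from sets to the integer-valued function $f_\#\chi_E$, which is the elementary simple-function argument above. An alternative, more computational route (closer to the use of Proposition~\ref{natpush}) would start from $f^\ast(g^\ast T)(\pi_0,\dots,\pi_k)=\widehat{g^\ast T}\bigl(f_\#(\pi_0\otimes\cdots\otimes\pi_k)\bigr)$ for $(\pi_0,\dots,\pi_k)\in\sD^k(X)$ and, using Proposition~\ref{natpush} and the identity $\widehat{g^\ast T}(\omega)=\widehat T(g_\#\omega)$ for $\omega\in\Parcon(Y)$, rewrite this as $\widehat T\bigl((g\circ f)_\#(\pi_0\otimes\cdots\otimes\pi_k)\bigr)=(g\circ f)^\ast T(\pi_0,\dots,\pi_k)$; here the delicate point is justifying that identity, which requires the sequential continuity of $g_\#$ on $\Parcon(Y)$ beyond Proposition~\ref{pushcont} in order to invoke uniqueness in Theorem~\ref{ext}.
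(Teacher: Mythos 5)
Your proof is correct, and while it shares the paper's overall strategy of reducing the composition rule to the uniqueness principle (Theorem \ref{unique}, equivalently Proposition \ref{preunique} applied to $g\circ f$), the central computation is carried out by a genuinely different route. The paper establishes the key identity $(g\circ f)_\ast\big((f^\ast g^\ast T)\lfloor E\big)=T\lfloor (g\circ f)_\#\chi_E$ by first proving it for Lipschitz weights $\eta$ via the polylipschitz machinery --- the cup-product identity $f_\#(\alpha\smile f^\#\beta)=f_\#\alpha\smile\beta$ of Proposition \ref{basicpush} together with the naturality $g_\#f_\#\pi=(g\circ f)_\#\pi$ of Proposition \ref{natpush} --- and then approximating $\chi_E$ by a sequence $\eta_j\in\LIP_c(X)$ converging in $L^1(\|T\|)$. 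You instead apply Theorem \ref{thm: locpull}(1) three times (to $f$, to $g$, and to $g\circ f$) and splice the results through the intermediate function $h=f_\#\chi_E$ by writing it as a finite sum of indicators of precompact Borel sets. This buys you two things: you never push forward a polylipschitz form, so you sidestep the difficulty the paper itself flags before its proof (that $f_\#\pi$ need not lie in $\Gamma_c^k(Y)$, so $g^\ast T(f_\#\pi)$ cannot be naively identified with $T(g_\#f_\#\pi)$), and you avoid both Proposition \ref{natpush} and the Lipschitz approximation step. The price is the simple-function bookkeeping, where you should add one line you currently only assert: $f_\#\chi_E$ is indeed bounded (cover $\overline{E}$ by finitely many normal domains via Lemma \ref{spread} and apply the summation formula \eqref{summation}), integer-valued, and supported in the compact set $f(\overline{E})$, so the superlevel sets $\{h\ge c\}$ are precompact and only finitely many are nonempty. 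Your remarks that the chain rule $(g\circ f)_\#\chi_E=g_\#(f_\#\chi_E)$ holds without properness for compactly supported functions, and that only compactly supported finite-mass currents are ever pushed forward, correctly dispose of the remaining well-definedness issues.
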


Note that, if $T\in M_{k,loc}(Z)$, then, by definition and Proposition \ref{natpush}, we have \[ (g\circ f)^\ast T(\pi)=T((g\circ f)_\#\pi)=T(g_\#f_\#\pi) \] for $\pi\in \sD^k(X)$. Unfortunately, since $f_\#\pi\in \G^k_c(Y)$ is not necessarily in $\Gamma^k_c(Y)$, we cannot conclude that $g^\ast T(f_\#\pi)$ (strictly speaking, $\widehat{g^\ast T}(f_\#\pi)$) is given by $T(g_\#f_\#\pi)$.

\begin{proof} We use Theorem \ref{unique}. Let $\eta\in \LIP_c(X)$ and $\pi\in \sD^k(Z)$. Then, by \cite[Lemma 4.14 (a)]{teripekka1}, Proposition \ref{basicpush} (4) and Proposition \ref{natpush}, 
	\begin{align*}
		(g\circ f)_\ast (f^\ast g^\ast T\lfloor \eta)(\pi)&=(f^\ast g^\ast T)(\eta\smile (g\circ f)^\#\pi)=T(g_\#f_\#(\eta\smile f^\# g^\#\pi))\\
		&=T(g_\#(f_\#\eta\smile g^\#\pi))=T(g_\#f_\#\eta\smile \pi)\\
		&=\big(T\lfloor {(g\circ f)_\#\eta}\big)(\pi).
	\end{align*}
	Let $E\subset X$ be a Borel set. Let $\eta_j$ be a sequence in $\LIP_c(X)$ converging to $\chi_E$ in $L^1(\|T\|)$ and $\|T\|$-almost everywhere; see \cite[Proposition 2.3.13 and Remark 2.3.16(a)]{HKST07}.  Then, for each $\pi\in \sD^k(Z)$,
	\[\begin{split}
	&(g\circ f)_\ast (f^\ast g^\ast T\lfloor E)(\pi)=\lim_{n\to\infty}(g\circ f)_\ast (f^\ast g^\ast T\lfloor \eta_n)(\pi)\\
	&=\lim_{n\to\infty}\big(T\lfloor {(g\circ f)_\#\eta_n}\big)(\pi)=T\lfloor(g\circ f)_\#\chi_E(\pi).
	\end{split}\]
	Thus, by Theorem \ref{unique}, we have that \[ f^\ast g^\ast T=(g\circ f)^\ast T. \] This completes the proof.
\end{proof}

\subsection{Pull-back of proper BLD maps} Throughout this subsection, $f:X\to Y$ is a proper $L$-BLD map between geodesic, oriented cohomology manifolds $X$ and $Y$. Recall that a proper branched cover is $(\deg f)$-to-one; see the discussion on the branch set in Section 6.3.

In this subsection we prove Corollary \ref{thm:pull-back}, that is, we prove that the pull-back $f^*:M_{k,\loc}(Y)\to M_{k,\loc}(X)$ satisfies the following properties.
\begin{enumerate}
	\item the composition $f_* \circ f^* \colon M_{k,\loc}(Y) \to M_{k,\loc}(Y)$ satisfies $f_\ast \circ f^\ast = (\deg f) \id$; 
	\item the pull-back $f^\ast$ commutes with the boundary, i.e. $\partial f^\ast T=f^\ast(\partial T)$ for $T\in M_{k,\loc}(X)$; and
	\item for each $T\in M_k(X)$, 
	\[
	\frac{1}{L^{k}}(\deg f)M(T)\le M(f^\ast T)\le L^k(\deg f)M(T).
	\] 
\end{enumerate}
Moreover,  the pull-back operator $f^\ast$ restricts to an operator 
\[ 
f^\ast: N_{k}(Y)\to N_{k}(X). 
\]

\begin{proof}[Proof of Corollary \ref{thm:pull-back}]
	Besides the restriction claim, we only need to prove (1) and (3) in the claim of Theorem \ref{thm:pull-back}. Let us first prove (3).
	
	Let $T\in M_k(Y)$ be a current of finite mass. For any compact set $K\subset X$, Theorem \ref{thm: locpull} (3) implies the estimate \[ L^{-k}\int_Yf_\#\chi_K\ud\|T\|\le \|f^\ast T\|(K)\le L^k \int_Yf_\#\chi_K\ud\|T\|. \] Let  $(K_j)$ be an increasing sequence of compact sets in $X$ for which $$\bigcup_{j\in\N}K_j=X.$$
	Then $$f_\#\chi_{K_j}\to f_\#\chi_X=\deg f$$ pointwise. Thus
	\begin{equation}\label{xz}
		\lim_{j\to\infty}\int_Yf_\#\chi_{K_j}\ud\|T\|= (\deg f)\int_Y\ud\|T\|=(\deg f)M(T).
	\end{equation}
	This proves (3).
	
	By (\ref{xz}) and Theorem \ref{thm: locpull}(1), we have \[ f_\ast(f^\ast T\lfloor {\chi_{K_j}})=T\lfloor {f_\#\chi_{K_j}}\to (\deg f)T \] weakly in $M_{k,\loc}(X)$ as $j\to\infty$. On the other hand \[ f^\ast T\lfloor {\chi_{K_j}}\to f^\ast T \] weakly in $M_{k,\loc}(X)$ as $j\to\infty$. Thus (1) in Corollary \ref{thm:pull-back} is proven.
	
	By Theorem \ref{thm: locpull} (2) and (3), $f_\#$ maps $N_k(Y)$ to $N_k(X)$.
\end{proof}




\section{Equidistribution estimates for pull-back currents} 
\label{sec:equidistribution}

\subsection{BLD-maps from $\R^n$ into metric spaces}\label{sec:BLDrn} Let $X$ be a metric space and $f:\R^n\to X$ a Lipschitz map. We will use the \emph{metric Jacobian} $Jf$ of $f$, defined by Kirhchheim \cite{kir94}: for almost every $x\in \R^n$ the limit 
\begin{equation}\label{kirchheim}
	\lim_{h\to 0}\frac{d(f(x+hv),f(x))}{|h|}, \ v\in \R^n
\end{equation}
exists for all $v\in\R^n$ and defines a seminorm. The \emph{metric differential} $${\rm md }_xf:\R^n\to [0,\infty)$$ of $f$ at such a point $x$ is the seminorm given by (\ref{kirchheim}) and zero otherwise. This induces the metric Jacobian $Jf:\R^n\to\R$, a Borel function defined for any point where the limit (\ref{kirchheim}) exists, by \[ Jf(x)=\left(\int_{S^{n-1}} {\rm md}_xf(v)^{-n}\ud \sigma_{n-1}(v)\right)\inv.  \] Here $\sigma_{n-1}$ is the normalized surface measure on the unit sphere $S^{n-1}$ of $\R^n$. The metric Jacobian plays a prominent role in the \emph{co-area formula}
\begin{equation}\label{coarea}
	\int_{\R^n}gJf\ud x=\int_X\left(\sum_{x\in f\inv(y)}g(x)\right)\ud\Ha^n(y).
\end{equation}
We refer to \cite{kir94} for details.

\begin{remark}\label{rem:bound}
	By \cite[Lemma 2.4]{lui17} we obtain that, if the limit in (\ref{kirchheim}) exists for $x\in \R^n$, then
	\[
	L\inv\le  {\rm md}_x(v)\le L
	\] for any $v\in S^{n-1}$. 
\end{remark}

Remark \ref{rem:bound} together with the co-area formula (\ref{coarea}) implies that a BLD-elliptic oriented cohomology $n$-manifold is locally Ahlfors $n$-regular; see \cite[Proposition 6.3 and Remark 4.16(b)]{hei02}.

\medskip\noindent Throughout the rest of this section $X$ is a compact geodesic oriented cohomology $n$-manifold, and $f:\R^n\to X$ an $L$-BLD map. We denote \[ |X|=\Ha^n(X) \textrm{ and } D=\diam(X). \] By Remark \ref{rem:bound} and the discussion after it the space $X$ is Ahlfors $n$-regular under the present assumptions. In particular $|X|\le CD^n$, where $C>0$ is the Ahlfors regularity constant.

\subsection{Equidistribution}

We turn our attention to the value distribution of BLD-maps. The following theorem will be used in the next subsection to obtain estimates on the mass of pullbacks of currents. For the theorem, let $A_f:(0,\infty)\to(0,\infty)$ be the function  $$R\mapsto\frac{1}{|X|}\int_{B(R)}Jf\ud x,$$
and denote $B(R)=B(0,R)\subset \R^n$.
\begin{theorem}\label{equidist}
	Let $f:\R^n\to X$ be an $L$-BLD-mapping to a compact geodesic oriented cohomology manifold. Then there exists a constant $c(n,L)>0$ for which
	\begin{equation}
		\left(1-\frac{c(n,L)D}{R}\right)\le \frac{f_\#\chi_{B(R)}(p)}{A_f(R)}\le\left(1+\frac{c(n,L)D}{R}\right)
	\end{equation}
	for every $p\in X$ and $R\ge LD$.
\end{theorem}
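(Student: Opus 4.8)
The plan is to exploit the path-lifting bijection of Lemma \ref{bijection} to compare the push-forward $f_\#\chi_{B(R)}(p)$ at two points $p,q \in X$, and then integrate this comparison over $X$ to relate the value at a single point to the average $A_f(R)$. First I would reduce to points $p,q \notin fB_f$, which is legitimate since $fB_f$ has topological dimension at most $n-2$ and hence $\Ha^n$-measure zero; for such points the fibers $f^{-1}(p), f^{-1}(q)$ consist of $i_f \equiv 1$ points only, so $f_\#\chi_{B(R)}(p) = |f^{-1}(p) \cap B(R)|$ is just a counting function. Fix a point $q$; since $X$ is geodesic, for each such $p$ there is a geodesic joining $p$ and $q$, of length $d(p,q) \le D$. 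Applying Lemma \ref{bijection} with the compact set $K = \overline{B(R+LD)}$ (say), I get a bijection $\psi \colon f^{-1}(p) \to f^{-1}(q)$ with $d(x,\psi(x)) \le L\, d(p,q) \le LD$ for every $x \in f^{-1}(p) \cap K$. The key observation is then: if $x \in f^{-1}(p) \cap B(R - LD)$, then $\psi(x) \in B(R)$, and conversely if $\psi(x) \in f^{-1}(q) \cap B(R-LD)$ then $x \in B(R)$. This gives the two-sided inequality
\[
f_\#\chi_{B(R-LD)}(q) \le f_\#\chi_{B(R)}(p) \le f_\#\chi_{B(R+LD)}(q)
\]
for $\Ha^n$-a.e.\ pair $(p,q)$, and since $p$ was arbitrary, for a.e.\ $p$ and a.e.\ $q$ simultaneously.

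Next I would integrate the inequality $f_\#\chi_{B(R)}(p) \le f_\#\chi_{B(R+LD)}(q)$ in the variable $q$ against $\ud\Ha^n(q)/|X|$ over $X$, and use the co-area formula \eqref{coarea} in the form $\int_X f_\#\chi_{B(\rho)} \, \ud\Ha^n = \int_{B(\rho)} Jf\, \ud x = |X| A_f(\rho)$ (here one uses $i_f \equiv 1$ a.e., or absorbs $i_f$ into the co-area formula; since $fB_f$ is null this is harmless). This yields $f_\#\chi_{B(R)}(p) \le A_f(R+LD)$, and the symmetric argument gives $f_\#\chi_{B(R)}(p) \ge A_f(R-LD)$, for a.e.\ $p$. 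Finally, to remove the exceptional set, I would observe that $p \mapsto f_\#\chi_{B(R)}(p)$ takes integer values on the complement of $fB_f$ and that the push-forward counting function is lower/upper semicontinuous in an appropriate sense along geodesics, or more simply: for $p \in fB_f$, one can approximate $f^{-1}(p)$ by fibers over nearby regular points and use that $i_f(x) = \sum i_f(x')$ over the split preimages, so the estimate $A_f(R-LD) \le f_\#\chi_{B(R)}(p) \le A_f(R+LD)$ persists for \emph{all} $p \in X$.

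The last step is to convert the additive error into the multiplicative form stated. Using Remark \ref{rem:bound}, $L^{-n} \le Jf \le L^n$ a.e., so for $\rho' \le \rho$,
\[
0 \le A_f(\rho) - A_f(\rho') = \frac{1}{|X|}\int_{B(\rho)\setminus B(\rho')} Jf\, \ud x \le \frac{L^n}{|X|}\,\omega_n\bigl(\rho^n - (\rho')^n\bigr),
\]
while from below $A_f(\rho) \ge \frac{L^{-n}\omega_n \rho^n}{|X|}$. Taking $\rho = R \pm LD$ and $\rho' = R$ (appropriately), and using $R \ge LD$ so that $R \pm LD$ stays comparable to $R$, the ratio $|A_f(R\pm LD) - A_f(R)|/A_f(R)$ is bounded by $C(n) L^{2n}\bigl((R+LD)^n - R^n\bigr)/R^n \le c(n,L) D/R$, after expanding $(R+LD)^n - R^n \le n(R+LD)^{n-1} LD \le c(n) L R^{n-1} D$ for $R \ge LD$. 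This gives the claimed bounds with $c(n,L)$ depending only on $n$ and $L$ (note $|X|$, $D$ cancel in the ratio). The main obstacle I anticipate is the bookkeeping in the first step: making sure the bijection $\psi$ from Lemma \ref{bijection} is compatible across the full (uncountable) family of target points $p$ simultaneously — this is resolved by fixing $q$ once and for all and letting $p$ range, using that $f^{-1}(q)$ is a fixed finite set and each $\psi_p$ is a bijection onto it, so no uniformity in $p$ beyond the Lipschitz bound $d(x,\psi_p(x)) \le LD$ (which is uniform) is actually needed.
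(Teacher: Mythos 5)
Your proposal is correct and follows essentially the same route as the paper: the path-lifting bijection of Lemma \ref{bijection} to compare fibers over two points, integration over $X$ via the co-area formula to bring in $A_f$, and the two-sided Jacobian bounds $L^{-n}\le Jf\le L^n$ from Remark \ref{rem:bound} to turn the annulus error $(R+LD)^n-(R-LD)^n$ into the multiplicative $c(n,L)D/R$ term. The only (cosmetic) difference is at the exceptional set $fB_f$: the paper replaces $\chi_{B(R)}$ by a Lipschitz minorant $\eta$ so that $f_\#\eta$ is continuous and the a.e.\ inequality extends to all $p$ before letting $\eta\uparrow\chi_{B(R)}$, whereas you argue directly by semicontinuity of the counting function at branch points, which works but needs slightly more care with open versus closed balls.
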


Theorem \ref{equidist} gives a quantitative equidistribution estimate with constants depending only on $n$ and $L$. We refer to \cite{mat79} and \cite{pan10} for similar results for quasiregular maps. We begin with an observation which we record as a lemma.

\begin{lemma}
	For a compactly supported Borel function $h:\R^n\to \R$, we have \[ \int_Xf_\# h \ud \Ha^n=\int_{\R^n} hJf\ud x.\]
	\begin{proof}
		By \cite[Corollary 10.2]{onn09} we have $|f^{-1}fB_f|=0$. Thus $$f_\# h(y)=\sum_{x\in f^{-1}(y)}h(x)$$ for almost every $y\in \R^n$. The rest follows directly from the change of variables formula (\ref{coarea}).
	\end{proof}
\end{lemma}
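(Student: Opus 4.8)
The plan is to reduce the identity to the co-area formula \eqref{coarea}, carrying the local index along through the formula and then discarding it on the Euclidean side. Recall from \eqref{eq: pushforward} that $f_\# h(y)=\sum_{x\in f^{-1}(y)}i_f(x)h(x)$; since $h$ has compact support and $f$ is a discrete map of locally bounded multiplicity, the fiber sum is finite for every $y$ and the product $x\mapsto i_f(x)h(x)$ is a compactly supported Borel function on $\R^n$. Applying the co-area formula \eqref{coarea} with $g=i_fh$ therefore yields
\[
\int_X f_\# h\,\ud\Ha^n
=\int_X\Bigl(\sum_{x\in f^{-1}(y)}i_f(x)h(x)\Bigr)\ud\Ha^n(y)
=\int_{\R^n} i_f\,h\,Jf\,\ud x .
\]

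The remaining step is to remove the factor $i_f$ from the last integral, that is, to show that $i_f=1$ Lebesgue-almost everywhere on $\R^n$. Under the standing convention that the orientations of $\R^n$ and $X$ are chosen so that $f$ is sense preserving, we have $i_f\ge 1$ everywhere, and $i_f(x)=1$ as soon as $f$ is a local homeomorphism at $x$, i.e. as soon as $x\notin B_f$. Hence $\{\,i_f\ne 1\,\}\subset B_f\subset f^{-1}(f(B_f))$, and by Onninen's result \cite[Corollary 10.2]{onn09} the set $f^{-1}(f(B_f))$ has Lebesgue measure zero. Therefore $i_f=1$ a.e., so $\int_{\R^n} i_f\,h\,Jf\,\ud x=\int_{\R^n} h\,Jf\,\ud x$, which combined with the identity above finishes the proof.

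The only non-routine ingredient here is the null-set statement $|f^{-1}(f(B_f))|=0$ (or even just $|B_f|=0$): this is a genuine geometric fact about branched covers and is \emph{not} a formal consequence of the topological dimension bound $\dim B_f\le n-2$, because a subset of topological dimension strictly less than $n$ may still carry positive Lebesgue measure. Accordingly, I would invoke Onninen \cite[Corollary 10.2]{onn09} for it. The other points entering the argument — Borel measurability of $i_f$, local boundedness of the multiplicity of a BLD-map, and the applicability of the co-area formula \eqref{coarea} to the compactly supported Borel function $i_fh$ — are standard and need no further comment.
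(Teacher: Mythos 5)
Your proof is correct and takes essentially the same route as the paper: both arguments combine the co-area formula (\ref{coarea}) with Onninen's result $|f^{-1}(fB_f)|=0$ to dispose of the local index, and your remark that the dimension bound $\dim B_f\le n-2$ alone would not suffice is exactly the right justification for invoking that result. The only (cosmetic) difference is that you discard the index on the Euclidean side, showing $i_f=1$ Lebesgue-a.e.\ before integrating, whereas the paper discards it on the target side, showing $f_\#h(y)=\sum_{x\in f^{-1}(y)}h(x)$ for almost every $y$; your variant even avoids having to transfer the null set from $\R^n$ to $X$.
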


\begin{proof}[Proof of Theorem \ref{equidist}]
	Let $R>LD$, and let  $\chi_{B(R-\delta)}\le \eta \le \chi_{B(R)}$ be a Lipschitz function. By Lemma \ref{bijection}, for points $p,q\in X\setminus fB_f$, there is a bijection $$\psi: f^{-1}(q)\to f^{-1}(p)$$ satisfying $$ d(p,q)/L\le |x-\psi(x)|\le Ld(p,q)$$ for all $x\in f\inv(q)\cap B(R)$. Thus \[f_\ast \eta(p)-f_\ast \eta(q) \le \sum_{x\in f^{-1}(q)}|\eta(\psi(x))-\eta(x)|.\]
	
	We have $|\eta(\psi(x))-\eta(x)|\le 1$ for all $x$. Moreover, 
	\begin{itemize}
		\item[(i)] $|\eta(\psi(x))-\eta(x)|=0$, if $x\in B(R-LD-\delta)$, and 
		\item[(ii)] $|\eta(\psi(x))-\eta(x)|=0$, if $x\in \R^n\setminus B(R+LD)$.
	\end{itemize}
	Therefore \[ \sum_{x\in f^{-1}(q)}|\eta(\psi(x))-\eta(x)|\le  \sum_{x\in f^{-1}(q)}\chi_{B(R+LD)\setminus B(R-LD-\delta)}(x)\] and we obtain \[ f_\# \eta(p)\le f_\# \chi_{B(R)}(q)+f_\# \chi_{B(R+LD)\setminus B(R-LD-\delta)}(q)\] for almost every $q$. Denote $$A(t,s)=B(t)\setminus B(s),\quad \textrm{ for }t>s.$$
	
	Integrating with respect to $q$ we obtain 
	\begin{align}\label{1}
		f_\# \eta(p)\le & \dashint_Xf_\# (\chi_{B(R)}+\chi_{B(R+LD)\setminus B(R-LD-\delta)})\ud q \nonumber \\ 
		=&\frac{1}{|X|}\left(\int_{B(R)}Jf\ud x+\int_{A(R+LD,R-LD-\delta)}Jf\ud x\right)
	\end{align}
	In similar fashion we may obtain the estimate \[ f_\# \eta(p)-f_\# \eta(q)\le f_\# \chi_{A(R+LD,R-LD-\delta)}(p). \] Fixing $q$ and integrating with respect to $p$ yields \[\dashint_{X} f_\ast \eta\ud p \le f_\ast \eta(q)+ \dashint_{X}f_\# \chi_{A(R+LD,R-LD-\delta)}\ud p,\] or
	\begin{equation}\label{2}
		\frac{1}{|X|}\left(\int_{B(R-\delta)}Jf\ud x-\int_{A(R+LD, R-LD-\delta)}Jf\ud x\right)\le f_\#\eta(q).
	\end{equation}
	Since $f_\#\eta$ is continuous, estimates (\ref{1}) and (\ref{2}) hold for all $p\in Y$. Letting $\delta\to 0$ we obtain
	\begin{align*}
		1-\frac{1}{|X|A_f(R)}\int_{A(R+LD,R-LD)}Jf\le \frac{f_\# \chi_{B(R)}(p)}{A_f(R)}\le 1+\frac{1}{|X|A_f(R)}\int_{A(R+LD,R-LD)}Jf
	\end{align*}
	for all $p\in M$. 
	
	Furthermore \[ \frac{1}{|X|A_f(R)}\int_{A(R+LD,R-LD)}Jf\le \frac{L^n2LDn(R+LD)^{n-1}}{L^{-n}R^n}\le\frac{2^nnL^{2n+1}D}{R}. \] This implies the claim.
\end{proof}

\subsection{Mass and flat norm estimates}
We apply the equidistribution Theorem \ref{equidist} to prove estimates for the mass and flat norm of pull-backs of locally normal currents.

\begin{theorem}\label{estimate}
	Let $f:\R^n\to X$ be an $L$-BLD map into a compact, geodesic oriented cohomology $n$-manifold $X$. Let $T\in N_{k}(X)$ and $R>CD$, where $C$ is the constant in Theorem \ref{equidist}. Then there is a constant $c=c(n,L,k)$ for which
	\begin{align*}
		&\frac 1cA_f(R)\F(T)\le \F_{B_R}(f^\ast T)\le c A_f(R)\F(T)
	\end{align*}
	and
	\begin{align*}
		&\frac 1c A_f(R)M(T)\le \|f^\ast T\|(B(R))\le c A_f(R)M(T).
	\end{align*}
\end{theorem}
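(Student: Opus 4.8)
The plan is to deduce both estimates from two inputs: the mass comparison $L^{-k}f^\ast\|T\|\le\|f^\ast T\|\le L^k f^\ast\|T\|$ of Theorem~\ref{thm: locpull}(3), and the equidistribution Theorem~\ref{equidist}, by which $f_\#\chi_{B(R)}$ is uniformly comparable to $A_f(R)$ on $X$ once $R\ge C(n,L)D$. Every quantity in the statement scales correctly under rescaling the metric of $X$: the map $x\mapsto\mathrm{scale}_\lambda(f(x/\lambda))$ is again $L$-BLD, $A_f(R)$ becomes $A_f(R/\lambda)$, and the masses and flat norms of the $k$-currents involved carry the expected power of $\lambda$; so we may and do assume $D=\diam X=1$. (This normalization is exactly what keeps all auxiliary constants of the form $c(n,L,k)$: without it the estimate for $\Lip(1/g)$ below would carry an unwanted factor $1/R$.) We also use that $X$ is Ahlfors $n$-regular, so $\Ha^n(B)\asymp_{n,L}(\text{radius of }B)^n$ for balls $B$ of radius $\le1$; that $f$ is globally $L$-Lipschitz since $X$ is geodesic; and that $f^\ast T$ is locally normal by the corollary to Theorem~\ref{thm: locpull}.

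For the mass estimate, the defining identity for the push-forward of the measure $\|T\|$ — already used in the proof of Theorem~\ref{thm: locpull} — reads $f^\ast\|T\|(B(R))=\int_X f_\#\chi_{B(R)}\,\ud\|T\|$; bounding the integrand via Theorem~\ref{equidist} gives $f^\ast\|T\|(B(R))\asymp_{n,L}A_f(R)\,M(T)$, and Theorem~\ref{thm: locpull}(3) transfers this to $\|f^\ast T\|(B(R))$ up to a factor $L^{\pm k}$. For the upper bound on the flat norm, fix a nearly optimal decomposition $T=S+\partial A$ with $A\in N_{k+1}(X)$; then $f^\ast T=f^\ast S+\partial(f^\ast A)$ by Theorem~\ref{thm: locpull}(2), so $\F_{B_R}(f^\ast T)\le\|f^\ast S\|(B_R)+\|f^\ast A\|(B_R)$, and applying the one-sided mass bound $\|f^\ast U\|(B_R)\le L^k f^\ast\|U\|(B_R)\le c\,A_f(R)\|U\|(X)$ to $U=S,A$ and minimizing over $A$ yields $\F_{B_R}(f^\ast T)\le c\,A_f(R)\F(T)$.

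The lower bound on the flat norm is the crux. Given any competitor $B\in N_{k+1,\loc}(\R^n)$ for $\F_{B_R}(f^\ast T)$, choose a Lipschitz cutoff $\eta$ with $\chi_{B(R/2)}\le\eta\le\chi_{B(R)}$, with $\spt\eta$ a compact subset of $B(R)$, and $\Lip\eta\le2/R$; then $B\lfloor\eta$ is a compactly supported normal current on $\R^n$. Using $\partial(B\lfloor\eta)=(\partial B)\lfloor\eta-B\lfloor(1,\eta)$, pushing forward by $f$, and invoking the standard identity $f_\ast\partial=\partial f_\ast$ together with the Lipschitz-function version $f_\ast((f^\ast T)\lfloor\eta)=T\lfloor f_\#\eta$ of Theorem~\ref{thm: locpull}(1) (which follows from the cup-product formula in Proposition~\ref{basicpush} exactly as in that proof), one obtains, with $g:=f_\#\eta$,
\[
T\lfloor g=\partial\bigl(f_\ast(B\lfloor\eta)\bigr)+f_\ast\bigl((f^\ast T-\partial B)\lfloor\eta\bigr)+f_\ast\bigl(B\lfloor(1,\eta)\bigr).
\]
Since push-forward under an $L$-Lipschitz map multiplies the mass of a $j$-current by at most $L^j$, the two summands after $\partial(\cdot)$ have total mass at most $L^k\|f^\ast T-\partial B\|(B_R)+\tfrac{2L^k}{R}\|B\|(B_R)$, while $\|f_\ast(B\lfloor\eta)\|(X)\le L^{k+1}\|B\|(B_R)$; as $R>C(n,L)\ge1$, taking the infimum over $B$ gives $\F_X(T\lfloor g)\le c(n,L,k)\,\F_{B_R}(f^\ast T)$. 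It then remains to pass from $T\lfloor g$ back to $T$. Here one uses $T=(T\lfloor g)\lfloor(1/g)$ — legitimate because $g\ge f_\#\chi_{B(R/2)}>0$ everywhere on $X$ by equidistribution — together with Lemma~\ref{flatprod}, which yields $\F_X(T)\le(\|1/g\|_\infty+\Lip(1/g))\,\F_X(T\lfloor g)$. Equidistribution on the plateau $B(R/2)$ combined with $A_f(R/2)\asymp_{n,L}A_f(R)$ gives $g\ge c(n,L)A_f(R)$; path-lifting (Lemma~\ref{bijection}) gives $|g(p)-g(q)|\le L\Lip(\eta)\,d(p,q)\,f_\#\chi_{A(R+L,\,R/2-L)}(p)$ for $p,q\notin fB_f$, so bounding $f_\#\chi_{A(R+L,R/2-L)}\le c(n,L)A_f(R)$ via equidistribution and Ahlfors regularity and using $\Lip\eta\le2/R\le2$ gives $\Lip g\le c(n,L)A_f(R)$, whence $\Lip(1/g)\le\Lip(g)/(\inf g)^2\le c(n,L)/A_f(R)$. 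Thus $\|1/g\|_\infty+\Lip(1/g)\le c(n,L)/A_f(R)$, and combining the displays yields $\F_{B_R}(f^\ast T)\ge c(n,L,k)^{-1}A_f(R)\F(T)$.

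The delicate point — and essentially the only obstacle beyond bookkeeping — is this last step: bounding $g=f_\#\eta$ from below and its Lipschitz constant from above by constants depending only on $n$ and $L$. This is exactly where the equidistribution theorem, the local Ahlfors regularity of $X$, and the normalization $\diam X=1$ all enter; the first two are needed to get the annulus estimate for $\Lip g$ with the cutoff width cancelling out, and the normalization to convert the residual $1/R$-factors into constants.
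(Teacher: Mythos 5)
Your argument is correct in substance and follows the paper's strategy closely: the mass estimate and the upper flat-norm bound are obtained exactly as in the paper (equidistribution applied to $f_\#\chi_{B(R)}$, transferred via Theorem \ref{thm: locpull}(2)--(3)), and the lower flat-norm bound rests on the same two ingredients the paper uses, namely the intertwining identity $f_\ast((f^\ast T)\lfloor\eta)=T\lfloor f_\#\eta$ for Lipschitz $\eta$ and the equidistribution bounds $f_\#\eta\gtrsim A_f(R)$ and $\Lip(f_\#\eta)\lesssim A_f(R)$. The one genuine variation is the recovery step at the end of the lower bound: the paper writes $T=T\lfloor(f_\#\eta/A_f(R))+T\lfloor\varphi$ with $\varphi=1-f_\#\eta/A_f(R)$ and absorbs $\F(T\lfloor\varphi)\le\tfrac12\F(T)$ into the left-hand side, which requires $\|\varphi\|_\infty+\Lip\varphi\le\tfrac12$; you instead invert, writing $T=(T\lfloor g)\lfloor(1/g)$ with $g=f_\#\eta$ and applying Lemma \ref{flatprod} to $1/g$, which only needs $g\asymp A_f(R)$ and $\Lip g\lesssim A_f(R)$ rather than $g/A_f(R)$ being Lipschitz-close to $1$. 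Both recoveries rest on the same estimates for $g$, so this is a variant rather than a new route; you are, however, more careful than the paper in working with an explicit competitor $B$ for $\F_{B_R}(f^\ast T)$ instead of the global flat norm of the merely locally normal current $f^\ast T$.

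One caveat concerns your opening normalization $D=1$. The mass of a $k$-current scales as $\lambda^k$ under rescaling of the metric, but the flat norm mixes $k$- and $(k+1)$-dimensional masses: after rescaling it becomes $\inf\{\lambda^k\|T-\partial A\|+\lambda^{k+1}\|A\|\}$, which is not a fixed power of $\lambda$ times $\F(T)$. So your assertion that ``flat norms carry the expected power of $\lambda$'' is not literally true, and proving the flat-norm comparison for $D=1$ does not formally yield it with a $D$-independent constant for all $D$. This affects only the uniformity in $D$ of the flat-norm part (concretely, both your argument and the paper's need $R\gtrsim_{n,L}1$, not merely $R\gtrsim_{n,L}D$, in the lower bound); the paper's own proof carries the same implicit restriction, since its hypothesis silently becomes $R>2C(n,L)(D+1)$, and the theorem is only ever invoked after normalizing $\diam X=1$, so nothing downstream is harmed. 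Still, you should either drop the scaling claim for flat norms and state the hypothesis as $R>C\max(D,1)$, or track the weighted flat norm through the rescaling.
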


\begin{proof}
	Denote $\chi_{B(R)}=:\chi_R$. By Theorem \ref{equidist} we have \[ \frac 12\le \frac{f_\#\chi_R}{A_f(R)}\le 2 \] for $R>2C(n,L)D$. Thus
	\begin{align*}
		\frac 12L^{-k} A_f(R)M(T)&\le L^{-k}\int_X f_\#\chi_R\ud\|T\|=L^{-k}f^\ast\|T\|(B(R))\\
		&\le  \|f^\ast T\|(B(R))\le L^kf^{\ast}\|T\|(B(R))\\
		&=L^k\int_X f_\#\chi_R\ud\|T\|\le 2L^kA_f(R)M(T),
	\end{align*}
	establishing the first estimate.
	
	To estimate the flat norm, let $A\in N_{k+1}(X)$. Then, by Proposition \ref{massest}, we have that
	\begin{align*}
		\|f^\ast T-\partial f^\ast A\|(B_R)+&\|f^\ast A\|(B_R)\\
		=&\|f^\ast(T-\partial A)\|(B_R)+\|f^\ast A\|(B_R)\\
		\le & 2L^kA_f(R)M(T-\partial A)+2L^{k+1}A_f(R) M(A)\\
		\le & 2L^{k+1}A_f(R)( M(T-\partial A)+M(A) ).
	\end{align*}
	Thus \[ \F_{B(R)}(f^\ast T)\le 2L^{k+1}A_f(R)\F(T).  \]
	
	For the opposite inequality, let $\eta:\R^n\to [0,\infty)$, $\eta(x)= (1-\dist(B_{R-1},x))_+$ be a Lipschitz function. By the proof of Proposition \ref{thm: locpull} (2) we have \[ T\lfloor {f_\ast\eta}=f_\ast((f^\ast T)\lfloor \eta). \] Let $\displaystyle \varphi=1-\frac{f_\ast\eta}{A_f(R)}$.

	\medskip\noindent{\bf Claim.}
	We have 
	$$\|\varphi\|_\infty+\Lip\varphi\le C(n,L)(D+1)/R,$$
	where $c(n,L)$ is a constant depending only on $n$ and $L$.
	\begin{proof}[Proof of Claim] We observe first that 
		\[|\varphi| \le \left|1-\frac{f_\#\chi_{R}}{A_f(R)}\right|+\frac{f_\#\chi_{R}-f_\#\eta}{A_f(R)}\le \frac{cD}{R}+\frac{f_\#\chi_{R}-f_\#\chi_{R-1}}{A_f(R)}, \] and \[ \Lip\varphi=\frac{\Lip(f_\#\eta)}{A_f(R)}\le L\frac{f_\#(\chi_{R}-\chi_{R-1})}{A_f(R)}. \] Thus it suffices to estimate $\displaystyle f_\#(\chi_{R}-\chi_{R-1})/A_f(R)$. By Theorem \ref{equidist} we have that
		\begin{align*}
			\frac{f_\ast(\chi_R-\chi_{R-1})}{A_f(R)}&\le 1+\frac{cD}{R}-\frac{A_f(R-1)}{A_f(R)}(1-\frac{cD}{R-1})\le \frac{A_f(R)-A_f(R-1)}{A_f(R)}+\frac{2cD}{R}\\
			&\le\frac{nL^nR^{n-1}}{L^{-n}R^n}+\frac{2cD}{R}= \frac{nL^{2n}+cD}{R},
		\end{align*}
		for $R>CD$. 
	\end{proof}

	\medskip\noindent By Lemmas \ref{flatprod} and \ref{locpush}, we have
	\begin{align}\label{number}
		\F(T)&=\F\bigg(T\lfloor(f_\#\eta/A_f(R))+T\lfloor \varphi \bigg)\le \frac{1}{A_f(R)}\F(T\lfloor f_\#\eta)+\F(T\lfloor\varphi)\nonumber \\
		&\le \frac{L^{k+1}(\|\eta\|_\infty+\Lip\eta)}{A_f(R)}\F(f^\ast T)+(\|\varphi\|_\infty+\Lip\varphi)\F(T)\nonumber\\
		&\le \frac{2L^{k+1}}{A_f(R)}\F(f^\ast T)+(\|\varphi\|_\infty+\Lip\varphi)\F(T).
	\end{align}
	
	If $R>2C(n,L)(D+1)$, then (\ref{number}) yields the estimate \[ \F(T)\le \frac{c(n,L,k)}{A_f(R)}\F(f^\ast T)+\frac 12\F(T), \] from which the remaining inequality readily follows.
\end{proof}

\section{Homology of normal metric currents}
\label{sec:current_homology}

In this section we assume that $X$ is a compact oriented cohomology manifold and, in addition, that $X$ is \emph{locally Lipschitz contractible}. Recall that $X$ is locally Lipschitz contractible if every neighborhood $U$ of every point $x\in X$ contains a neighborhood $V\subset U$ of $x$ so that there is a Lipschitz map $$h:[0,1]\times V\to U$$ so that $h_1(y)=y$ for every $y\in V$ and $h_0$ is constant. We remark that this is similar to the notion of $\gamma$-Lipschitz contractibility in \cite[Section 3.2]{wen07}. For compact spaces it is not difficult to see that the two notions coincide in the sense that a locally Lipschitz contractible is $\gamma$-Lipschitz contractible for some $\gamma$, and a $\gamma$-Lipschitz contractible space is locally Lipschitz contractible.

\subsection{Current homology and oriented cohomology manifolds}\label{sec:homology}

The boundary map $$\partial_k :N_{k}(X)\to N_{k-1}(X)$$ satisfies $\partial_{k-1}\partial_k=0$, which can be readily seen from the definition of metric currents; see also \cite[Section 3]{lan11}. Thus the boundary map induces a chain complex
\begin{equation}\label{chain}
	\cdots \stackrel{\partial}{\longrightarrow} N_k(X)\stackrel{\partial}{\longrightarrow}N_{k-1}(X)\stackrel{\partial}{\longrightarrow}\cdots \stackrel{\partial}{\longrightarrow}N_0(X)\stackrel{}{\longrightarrow}0
\end{equation} 
As is customary we omit the subscripts from $\partial$. 

We study the homology of the chain complex (\ref{chain}) for a BLD-elliptic oriented cohomology manifold $X$ and we denote the homology groups of (\ref{chain}) by
\begin{equation}\label{homology}
	H_k(X):=\ker\partial_{k}/\ran\partial_{k+1},
\end{equation}
for $k\ge 0$. 

It is known that $H_\ast(\cdot)$ defines a homology theory satisfying the Eilenberg--Steenrod axioms; see \cite{mit15} and also \cite{wen07} for integral currents, and \cite{dep17} for the homology of normal chains and cohomology of charges. For us, \emph{homology} always refers to the homology (\ref{homology}) of (\ref{chain}).


\begin{remark}\label{rmk:homology}
	In what follows we compare the homology $H_\ast(X)$ with the singular homology $H^s_\ast(X;\R)$ with real coefficients. The assumption of local Lipschitz contractibility ensures that, on spaces that have the homotopy type of a CW-complex, the normal current homology $H_\ast(X)$ coincides with singular homology $H_\ast^s(X;\R)$; see \cite[Corollary 1.6]{mit15}.
\end{remark}

\subsection{Filling inequalities}
We say that a locally compact metric space $X$ \emph{admits a filling inequality for $N_k(X)$} if there is a constant $C>0$ such that each $T\in N_{k+1}(X)$ satisfies $$\FillVol(\partial T)\le CM(\partial T).$$ Recall that the \emph{filling volume} of a current $A\in N_k(X)$ is defined to be $$ \FillVol(A) =\inf\{ M(B): \ \partial B=A \},$$ the infimum over the empty set being understood as infinity. This means in particular that, if $S\in N_k(X)$ and $S=\partial T'$ for some $T'\in N_{k+1}(X)$, then there exists $T\in N_{k+1}(X)$ satisfying $S=\partial T$ and 
\begin{equation}\label{filling}
	M(T)\le CM(S).
\end{equation}

There is a related notion of \emph{cone type inequalities} introduced by Wenger \cite{wen05}. A space $X$ is said to \emph{support cone type inequalities for $N_k(X)$} if there exists a constant $C>0$ with the property that, if $S\in\ker\partial_k$, then there exists $T\in N_{k+1}(X)$ satisfying $\partial T=S$ and \[ M(T)\le C\diam(\spt S)M(S). \] A space $X$ supporting a cone type inequality for $N_k(X)$ necessarily has trivial current homology $H_k(X)$, whereas spaces admitting filling inequalities only require (\ref{filling}) for currents $S$ \emph{a priori known to have a filling}.

\begin{remark}
	In \cite{dep17}, De Pauw, Hardt, and Pfeffer introduce the notion of \emph{locally acyclic spaces}, see \cite[Definition 16.10]{dep17}. Locally Lipschitz contractible spaces are locally acyclic spaces, but the connection between filling inequalities and local acyclicity is not clear to us. 
\end{remark}

In this subsection we prove that compact BLD-elliptic spaces as in Theorem \ref{thm: main} support filling inequalities.

\begin{proposition}\label{main2}
	Let $f:\R^n\to X$ be an $L$-BLD map into a compact, geodesic, oriented and locally Lipschitz contractible cohomology $n$-manifold $X$, and let $0\le k\le n$. Then there exists a  constant $C>0$ having the property that, for every $T\in\ran \partial_{k+1}$ there exists $S\in N_{k+1}(X)$ satisfying $\partial S=T$ and \[ M(S)\le CM(T). \]
\end{proposition}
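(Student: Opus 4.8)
The plan is to derive Proposition~\ref{main2} from a soft open–mapping argument whose only substantial input is the a priori finite dimensionality of the current homology $H_k(X)$; in particular this step uses neither the map $f$ nor the equidistribution estimates directly.

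Write $N=N(\cdot)$ for the normal mass norm. First I would record the trivial bound $N(\partial T)=\|\partial T\|(X)\le N(T)$, so that $\partial_{k+1}\colon (N_{k+1}(X),N)\to(N_k(X),N)$ is a bounded linear operator and the kernels $Z_{k+1}:=\ker\partial_{k+1}$ and $Z_k:=\ker\partial_k$ are closed; in particular $Z_k$ is a Banach space and $N_{k+1}(X)/Z_{k+1}$ is a Banach space. Since $\partial_k\circ\partial_{k+1}=0$ we have $\operatorname{ran}\partial_{k+1}\subset Z_k$, and $Z_k/\operatorname{ran}\partial_{k+1}=H_k(X)$ is finite dimensional: under the hypotheses $X$ is compact, finite dimensional and locally contractible, hence an ANR with the homotopy type of a finite CW complex, so $H_k(X)\cong H^s_k(X;\R)$ by Remark~\ref{rmk:homology}, and this space is finite dimensional (this is the a priori finiteness referred to in the introduction).

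The key step is then the following. Fix a finite-dimensional subspace $F_0\subset Z_k$ with $Z_k=\operatorname{ran}\partial_{k+1}\oplus F_0$ (possible since $\dim Z_k/\operatorname{ran}\partial_{k+1}<\infty$), and consider the bounded linear map
\[
\Phi\colon N_{k+1}(X)\oplus F_0\longrightarrow Z_k,\qquad (S,v)\longmapsto \partial S+v.
\]
This $\Phi$ is onto, and $\ker\Phi=Z_{k+1}\oplus\{0\}$ because $\operatorname{ran}\partial_{k+1}\cap F_0=\{0\}$. Hence $\Phi$ descends to a continuous linear bijection $\bigl(N_{k+1}(X)/Z_{k+1}\bigr)\oplus F_0\to Z_k$ of Banach spaces, which by the open mapping theorem has a bounded inverse. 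Reading off the first coordinate, there is a constant $C>0$ with $\inf\{N(S):S\in N_{k+1}(X),\ \partial S=T\}\le C\,N(T)$ for every $T\in\operatorname{ran}\partial_{k+1}$. Finally, each such $T$ is a cycle, so $N(T)=\|T\|(X)=M(T)$; choosing $S$ with $\partial S=T$ and $N(S)\le 2C\,M(T)$ gives $M(S)\le N(S)\le 2C\,M(T)$, which is the assertion with constant $2C$.

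The functional analysis here is routine once one knows $\dim H_k(X)<\infty$; accordingly, the only point that requires care — and the only place where compactness, local Lipschitz contractibility, and (through bi-Lipschitz embeddability) the BLD hypothesis really enter — is the identification of $H_k(X)$ with the singular homology $H^s_k(X;\R)$, i.e.\ the a priori finite dimensionality. If one wished to avoid invoking this identification, one could instead try to build fillings by hand from the Lipschitz contractions (a coning construction controls cycles of small support) together with a partition-of-unity/chopping argument; but patching such local fillings for normal currents of arbitrary support into a single global filling with uniformly controlled mass is delicate, and the finite-dimensionality route above circumvents it entirely.
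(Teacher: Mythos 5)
Your argument is correct and follows essentially the same route as the paper: the topological input (compact, finite covering dimension, locally Lipschitz contractible $\Rightarrow$ ANR $\Rightarrow$ finite CW homotopy type $\Rightarrow$ $\dim H_k(X)<\infty$) is identical, and the quantitative conclusion comes from the open mapping theorem in both cases. The only cosmetic difference is that the paper first establishes closed range of $\partial_{k+1}$ as a separate lemma (a subspace of finite codimension in the range of a bounded operator is closed) and then applies the open mapping theorem to $\overline\partial\colon N_{k+1}(X)/\ker\partial\to\operatorname{ran}\partial$, whereas you fold both steps into a single application via the auxiliary surjection $\Phi$.
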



Filling inequalities are equivalent to the closedness of the range of $\partial$. We show this using finite dimensionality of the homology.

\begin{lemma}\label{cwtype}
	Let $X$ be a compact locally Lipschitz contractible metric space of finite covering dimension. Then the current homology $H_k(X)$ is finite dimensional for all $k\in\N$. 
\end{lemma}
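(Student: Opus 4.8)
The plan is to reduce the finite-dimensionality of the current homology $H_k(X)$ to that of the ordinary singular homology of $X$, and then to invoke the comparison between the two recorded in Remark \ref{rmk:homology}.

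First I would observe that, forgetting the Lipschitz bound, local Lipschitz contractibility says exactly that every neighbourhood $U$ of a point $x\in X$ contains a neighbourhood $V\ni x$ which is contractible inside $U$; in particular $X$ is locally $m$-connected for every $m\in\N$. Since in addition $X$ is a compact metric space of finite covering dimension, the classical characterization of finite-dimensional absolute neighbourhood retracts (a finite-dimensional metrizable space which is locally $m$-connected in all relevant degrees is an ANR) shows that $X$ is a compact ANR.

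Next, a compact ANR is homotopy dominated by a finite CW complex --- for instance by the nerve of a sufficiently fine finite open cover --- hence it has the homotopy type of a CW complex and its singular homology $H^s_k(X;\R)$ is finitely generated, in particular finite dimensional, for every $k$. Since $X$ has the homotopy type of a CW complex, Remark \ref{rmk:homology}, i.e.\ \cite[Corollary 1.6]{mit15}, provides an isomorphism $H_k(X)\cong H^s_k(X;\R)$ for all $k$, and the claim follows.

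The only non-routine point is the ANR structure theory used above: that a finite-dimensional, locally contractible compact metric space is an ANR, and that compact ANRs are homotopy dominated by finite complexes. If one wishes to keep the argument within the metric toolkit of the paper, an alternative is to use compactness together with local Lipschitz contractibility to construct directly, by a \v Cech/nerve argument, a finite complex homotopy equivalent to $X$; this reproves the needed finiteness in the present setting without appealing to the general theory.
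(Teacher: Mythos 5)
Your argument is correct and follows essentially the same route as the paper: the paper likewise deduces from finite covering dimension and local contractibility that $X$ is a (Euclidean) neighborhood retract (citing Hu), then that it is homotopy equivalent to a finite CW complex (citing Hatcher), and finally invokes \cite[Corollary 1.6]{mit15} to identify $H_k(X)$ with the finite-dimensional singular homology. The only cosmetic difference is that you pass through domination by the nerve of a finite cover rather than citing the ENR/CW results directly; the underlying facts are the same.
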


\begin{proof}
	By \cite[Theorem V.7.1]{hu65} the space $X$ is an Euclidean neighborhood retract and by \cite[Corollary A.8]{hat02} it has the homotopy type of a finite CW-complex. By \cite[Corollary 1.6]{mit15} the normal current homology groups are isomorphic to the  singular homology groups (with real coefficients), and thus finite dimensional. 
\end{proof}

Lemma \ref{cwtype} immediately yields the desired finite dimensionality as a corollary. 
\begin{corollary}\label{finite}
	Let $X$ be a compact, locally geodesic,  orientable, and locally Lipschitz contractible cohomology $n$-manifold. Then the normal current homology groups $H_k(X)$ are finite dimensional for all $k\in\N$.
\end{corollary}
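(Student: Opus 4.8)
The plan is to deduce Corollary \ref{finite} directly from Lemma \ref{cwtype} by checking its hypotheses; the only point that deserves a word is that the standing definition of ``cohomology $n$-manifold'' already supplies the finite-dimensionality assumption that Lemma \ref{cwtype} requires.

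First I would recall that, by the definition of an oriented cohomology $n$-manifold in Section \ref{sec: cohomology manifolds}, condition (a) states precisely that $X$ has finite covering dimension. Hence this hypothesis of Lemma \ref{cwtype} holds automatically for any cohomology $n$-manifold, in particular for the space $X$ in the corollary.

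Next, the two remaining hypotheses of Lemma \ref{cwtype}, namely that $X$ be compact and locally Lipschitz contractible, are among the assumptions of the corollary. (The orientability and locally geodesic properties play no role in this particular statement; they are carried along only because they constitute the standing setting of this section.) Applying Lemma \ref{cwtype} then yields at once that $H_k(X)$ is finite dimensional for all $k\in\N$, which is exactly the assertion.

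I do not anticipate any genuine obstacle: the mathematical content is entirely contained in Lemma \ref{cwtype} --- which itself rests on the Euclidean-neighborhood-retract and finite-CW-homotopy-type results of Hu \cite{hu65} and Hatcher \cite{hat02}, together with the comparison \cite[Corollary 1.6]{mit15} between normal current homology and singular homology --- and the corollary is a purely bookkeeping consequence obtained by unwinding the definition of a cohomology manifold.
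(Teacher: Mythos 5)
Your proposal is correct and matches the paper exactly: the paper derives Corollary \ref{finite} immediately from Lemma \ref{cwtype}, and the finite covering dimension needed there is indeed supplied by condition (a) in the definition of an oriented cohomology $n$-manifold. Your remark that orientability and the locally geodesic property play no role here is also accurate.
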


\begin{lemma}\label{closed}
	Let $X$ be a compact, locally geodesic,  orientable, and locally Lipschitz contractible cohomology $n$-manifold, and $k\ge 1$. Then the boundary operator $$\partial=\partial_k:N_k(X)\to N_{k-1}(X)$$ has closed range.
\end{lemma}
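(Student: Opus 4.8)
The plan is to combine the finite-dimensionality of the homology (Corollary \ref{finite}) with the Banach space structure of $N_k(X)$ and a standard open-mapping argument. Write $Z_k=\ker\partial_k\subset N_k(X)$ and $B_k=\ran\partial_{k+1}\subset Z_{k-1}$. Both $Z_k$ and (a priori) the closure $\overline{B_{k}}$ are closed subspaces of the Banach space $(N_k(X),N)$, where $N$ is the normal mass; recall from the excerpt that $(N_k(X),N)$ is a Banach space and $\partial\colon N_k(X)\to N_{k-1}(X)$ is bounded with respect to $N$ (indeed $N(\partial T)\le N(T)$). By Corollary \ref{finite} the quotient $Z_{k-1}/B_{k}=H_{k-1}(X)$ is finite dimensional; what I actually want is that $B_{k}$ itself is closed in $N_{k-1}(X)$, equivalently closed in $Z_{k-1}$.

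First I would reduce to an open-mapping statement. Consider the bounded surjection $\overline\partial\colon N_k(X)/Z_k\to B_k$ induced by $\partial$; this is a continuous bijection of normed spaces, and $N_k(X)/Z_k$ is a Banach space. If I knew $B_k$ were closed, the open mapping theorem would give that $\overline\partial$ is a topological isomorphism, which is exactly the filling inequality. So the real content is the closedness itself, and here is where finite-dimensionality of $H_{k-1}$ enters: I claim that since $H_{k-1}(X)$ is finite dimensional, $B_k$ has finite codimension in $Z_{k-1}$, and a finite-codimensional subspace of a Banach space which is the range of a bounded operator is closed provided... — this is not automatic, so instead I would argue directly. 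Pick finitely many normal currents $S_1,\dots,S_m\in Z_{k-1}$ whose classes span $H_{k-1}(X)$, and let $F=\mathrm{span}\{S_1,\dots,S_m\}$, a finite-dimensional (hence closed) subspace with $Z_{k-1}=B_k+F$ (as vector spaces, not necessarily a direct sum). Then consider the bounded operator $\Phi\colon N_k(X)\oplus F\to Z_{k-1}$, $(T,v)\mapsto \partial T+v$; it is surjective, the domain is a Banach space, the target is a Banach space, so by the open mapping theorem $\Phi$ is open, hence there is $C_0>0$ so that every $w\in Z_{k-1}$ can be written $w=\partial T+v$ with $N(T)+N(v)\le C_0 N(w)$.

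From this quantitative splitting I would deduce the lemma as follows. Suppose $T_j=\partial A_j\to T$ in $N_{k-1}(X)$ with $T_j\in B_k$; I must show $T\in B_k$, and for Proposition \ref{main2} I actually want the stronger quantitative form $\FillVol(T)\le C\,N(T)$ for $T\in B_k$. Given $T\in B_k$, apply the open-mapping splitting to get $T=\partial T'+v$ with $N(T')+N(v)\le C_0N(T)$; since $T\in B_k$, the component $v\in F$ must itself lie in $B_k\cap F$, and on the finite-dimensional space $F$ the subspace $B_k\cap F$ is closed, so $v=\partial v'$ for some $v'\in N_k(X)$ with $N(v')\le C_1 N(v)\le C_1C_0 N(T)$ (using that on the finite-dimensional $B_k\cap F$ a choice of filling depends linearly and boundedly on $v$ — pick a basis and fix fillings). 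Then $T=\partial(T'+v')$ with $N(T'+v')\le (1+C_1)C_0N(T)$, giving the filling inequality with $C=(1+C_1)C_0$; closedness of $B_k$ is then immediate since the filling bound passes to limits (or simply: $B_k$ is the image of the closed ball multiples under $\partial$, which is now seen to be relatively closed).

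The main obstacle, and the step I would be most careful with, is the claim that $B_k\cap F$ admits fillings with a uniform bound — i.e.\ that one can choose, for each element of the finite-dimensional space $B_k\cap F$, a preimage under $\partial$ depending boundedly (linearly suffices) on the element. This is where I must not circularly assume what I am proving: the point is that $B_k\cap F$ is finite dimensional, so fixing a basis $\{w_1,\dots,w_r\}$ of it and arbitrary fillings $w_i=\partial u_i$, every $w=\sum c_iw_i$ has the filling $\sum c_iu_i$, and $\|(c_i)\|\lesssim N(w)$ by equivalence of norms on finite-dimensional spaces; this is legitimate and non-circular. A secondary point to verify is that $H_{k-1}(X)$ being finite dimensional really gives $Z_{k-1}=B_k+F$ with $F$ finite dimensional, which is just the definition of finite-dimensional quotient. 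With these in hand the open mapping theorem does the rest, and Proposition \ref{main2} follows directly by applying the filling bound to $T\in\ran\partial_{k+1}=B_{k+1}$ in degree $k+1$.
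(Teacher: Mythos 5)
Your argument is correct, and at its core it rests on the same fact as the paper's proof: the range of a bounded operator between Banach spaces is closed once it has finite codimension, the finite codimension of $\ran\partial_k$ in $\ker\partial_{k-1}$ being supplied by Corollary \ref{finite}. The difference is that the paper simply cites this fact (\cite[Corollary 2.17]{abr02}), whereas you prove it by hand via the open mapping theorem applied to $\Phi\colon (T,v)\mapsto\partial T+v$ on $N_k(X)\oplus F$, and in doing so you reverse the paper's logical order: you extract the quantitative filling bound $\FillVol(T)\le C\,N(T)$ for $T\in\ran\partial_k$ first (your treatment of $B_k\cap F$ by fixing fillings of a basis is legitimate and non-circular, exactly as you argue) and then deduce closedness from it, while the paper proves closedness first and only afterwards obtains the filling inequality in Proposition \ref{main2} by a second application of the open mapping theorem. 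Your route thus subsumes Proposition \ref{main2}. Two small caveats. First, the step ``closedness is then immediate since the filling bound passes to limits'' needs one more line: if $w_j\in\ran\partial_k$ converge to $w$, the fillings $A_j$ with $N(A_j)\le C\,N(w_j)$ are merely bounded, so you should either telescope (fill $w_{j+1}-w_j$ along a rapidly convergent subsequence and sum the resulting absolutely convergent series in the Banach space $N_k(X)$, using continuity of $\partial$) or invoke weak compactness of normal currents as in Lemma \ref{mini}; the parenthetical about images of balls is not by itself a proof. Second, your opening definition $B_k=\ran\partial_{k+1}$ is an indexing slip, since everywhere afterwards you use $B_k=\ran\partial_k\subset Z_{k-1}$, which is what the lemma concerns. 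Finally, note that if you choose the $S_i$ so that their classes form a \emph{basis} of $H_{k-1}(X)$, then $Z_{k-1}=B_k\oplus F$ is a direct sum, $\Phi$ descends to a bounded bijection $(N_k(X)/Z_k)\oplus F\to Z_{k-1}$, hence an isomorphism, and $B_k$ is the image of the closed subspace $(N_k(X)/Z_k)\oplus\{0\}$; this shortcut eliminates the $B_k\cap F$ discussion entirely.
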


\begin{proof}
	Since $\ran\partial\subset \ker\partial_{k-1}$, we may consider $\partial$ as an operator $$\partial: N_k(X)\to \ker\partial_{k-1}.$$ By Corollary \ref{finite} the subspace $\ran\partial$ has finite co-dimension in $\ker\partial_{k-1}$. Then $\ran\partial$ is closed in $\ker\partial_{k-1}$ and thus in $N_{k-1}(X)$; see e.g. \cite[Corollary 2.17]{abr02}.
\end{proof}


We are now ready for the proof of the filling inequality.

\begin{proof}[Proof of Proposition \ref{main2}]
	Let $k\ge 0 $ and consider the operator $\partial =\partial_{k+1}$. By Lemma \ref{closed}, $(\ran\partial,N)$ is a Banach space. The canonical operator $$\overline\partial:N_{k+1}(X)/\ker\partial\to\ran\partial$$ is injective and onto. By the open mapping theorem, there is a constant $0<c<\infty$ for which
	\begin{equation}\label{below}
		M(\partial T)=N(\overline\partial [T])\ge c\|[T]\|_{N_{k+1}(X)/\ker\partial}=c\inf\{ N(T-A):\partial A=0 \}
	\end{equation}
	for every $T\in N_k(X)$. Let $A\in \ker_{k+1}\partial$. Then $$ \FillVol(\partial T)\le M(T-A)\le N(T-A).$$ This implies \[ \FillVol(\partial T)\le c\inv M(\partial T), \] and consequently the filling inequality for $N_k(X)$. 
\end{proof}

\subsection{Homological boundedness}

We use the filling inequality to establish the existence of mass minimal elements in homology classes of $H_\ast(X)$.

\begin{lemma}\label{mini}
	Let $X$ be compact, geodesic and locally Lipschitz contractible oriented cohomology manifold, and let $k\ge 0$ be an integer. Then each homology class $[S]=S+Im \partial\in H_k(X)$ contains an element $T\in[S]$ minimizing the flat norm $\F$ in $[S]$. Moreover, $T$ satisfies \[ \F(T)=M(T) \] and minimizes $\F$ in $[S]$ as well.
\end{lemma}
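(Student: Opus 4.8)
The plan is to first produce a current of minimal mass inside the homology class $[S]$, and then to read off the two flat-norm assertions from minimality by short manipulations of $\F$. Since $X$ is compact, $N_{k,\loc}(X)=N_k(X)$, all currents below have finite mass, and $\|\cdot\|(X)=M(\cdot)$; moreover every representative $T'\in[S]$ is a cycle, so $N(T')=M(T')+M(\partial T')=M(T')$.

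First I would set $m:=\inf\{M(T'):T'\in[S]\}$, which is finite since $S\in[S]$, and choose $T_i\in[S]$ with $M(T_i)\to m$ and $M(T_i)\le m+1$. Each difference $T_i-S$ lies in $\ran\partial_{k+1}$, which is closed by Lemma~\ref{closed}; hence the canonical isomorphism $N_{k+1}(X)/\ker\partial_{k+1}\to\ran\partial_{k+1}$ is open (cf.\ the proof of Proposition~\ref{main2}), and there are a constant $C>0$ independent of $i$ and currents $C_i\in N_{k+1}(X)$ with $\partial C_i=T_i-S$ and $M(C_i)\le C\,M(T_i-S)\le C(m+1+M(S))$. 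Thus $(T_i)$ and $(C_i)$ are sequences of normal currents supported in the compact set $X$ with uniformly bounded normal mass.

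Next I would apply the compactness of such sets in the flat norm (the analogue for compact metric spaces of Theorem~\ref{compactness}; see \cite{dep17}) to pass to a subsequence along which $\F(T_i-T)\to0$ and $\F(C_i-C)\to0$ for some $T\in N_k(X)$, $C\in N_{k+1}(X)$. Since $\F(\partial C_i-\partial C)\le\F(C_i-C)\to0$ while $\partial C_i=T_i-S\to T-S$ in the flat norm, uniqueness of flat limits gives $T-S=\partial C\in\ran\partial_{k+1}$, so $T\in[S]$ (and $\partial T=0$). Flat convergence $T_i\to T$ forces weak convergence $T_i\rightharpoonup T$ — immediate from the mass estimate \eqref{mass} applied to a flat decomposition of $T_i-T$ — so lower semicontinuity of mass yields $M(T)\le\liminf_iM(T_i)=m$; combined with $M(T)\ge m$ this gives $M(T)=m$, so $[S]$ contains a mass minimizer $T$.

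It then remains to deduce the flat-norm statements. For any $A\in N_{k+1}(X)$ we have $T-\partial A\in[S]$, so $\|T-\partial A\|(X)=M(T-\partial A)\ge m=M(T)$; taking the infimum over $A$ gives $\F(T)\ge M(T)$, whereas $\F(T)\le\|T\|(X)=M(T)$ always (take $A=0$), so $\F(T)=M(T)$. Likewise, for arbitrary $T'\in[S]$ and any $A$, the relation $T'-\partial A\in[S]$ forces $\|T'-\partial A\|(X)+\|A\|(X)\ge m=\F(T)$, whence $\F(T')\ge\F(T)$; thus $T$ minimizes $\F$ over $[S]$ as well. The main obstacle is the extraction step: one needs the flat-norm compactness of normal currents with bounded normal mass in the compact space $X$ (not just in $\R^n$), and one must ensure the flat limit remains in $[S]$ — which is exactly where closedness of $\ran\partial_{k+1}$ (Lemma~\ref{closed}), equivalently the filling inequality, enters, through the uniform bound on the primitives $C_i$.
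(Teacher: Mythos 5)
Your proof is correct and follows the same overall strategy as the paper's: take a mass-minimizing sequence in $[S]$, use the filling inequality (closedness of $\ran\partial$, Proposition \ref{main2}) to get uniformly bounded primitives, extract a convergent subsequence, pass to the limit by lower semicontinuity of mass, and then derive $\F(T)=M(T)$ and the $\F$-minimality by exactly the manipulations the paper uses. The one genuine difference is the compactness tool. The paper applies the weak sequential compactness of bounded sets of normal currents (\cite[Theorem 5.4]{lan11}) \emph{only to the primitives} $A_m$; since the boundary is weakly continuous, $S+\partial A_m\rightharpoonup S+\partial A$ follows at once and lower semicontinuity of mass finishes the existence part. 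You instead extract flat-convergent subsequences of \emph{both} $(T_i)$ and $(C_i)$, which requires the flat-norm compactness theorem for normal currents in a general compact metric space --- a result the paper only states for subsets of $\R^n$ (Theorem \ref{compactness}), deferring the metric-space version to \cite{dep17} --- and you then need the extra bookkeeping of identifying the flat limit via $\F(\partial C_i-\partial C)\le\F(C_i-C)$ and uniqueness of flat limits, plus the observation that flat convergence implies weak convergence. Your route works, but it is heavier than necessary: once $C_i$ converges (weakly or in flat norm), the convergence of $T_i=S+\partial C_i$ and the membership of the limit in $[S]$ come for free, so the separate extraction for $(T_i)$ and the appeal to flat compactness can be dropped in favor of the more elementary weak compactness of normal currents.
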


\begin{proof}
	Suppose $S_m=S+\partial A_m$ is a minimizing sequence in $[S]$, and denote  $B=\sup_mM(S_m)<\infty$. By Proposition \ref{main2}, we may assume that \[ M(A_m)\le CM(\partial A_m)=CM(S-S_m)\le C(M(S)+B) \] for each $m\in\N$, where $C$ is the constant in the claim of Proposition \ref{main2}. Thus $$\sup_mN(A_m)<\infty.$$ By passing to a subsequence  we may assume that the sequence  $(A_m)$ converges weakly to a normal current $A\in N_{k+1}(X)$. By the lower semicontinuity of the mass, \[ M(S+\partial A)\le \liminf_{m} M(S+\partial A_m)=\inf\{M(T): T\in [A] \}. \] Thus $S+\partial A$ is a mass minimizer in $[S]$.
	
	Let $T\in[S]$ be a mass minimizer in $[S]$. Then the inequality $\F(T)\le M(T)$ holds automatically. Further, for any $A\in N_{k+1}(X)$, 
	\[ 
	M(T)\le M(T-\partial A)\le M(T-\partial A)+M(A). 
	\] 
	Thus, taking infimum over $A\in  N_{k+1}(X)$ yields $M(T)\le \F(T)$.

	The equality $\F(T)=M(T)$ implies that, for any $A\in S+\ran\partial$ and $B\in N_{k+1}(X)$, \[ \F(T)=M(T)\le M(A-\partial B)\le M(A-\partial B)+M(B). \] Taking infimum over $B$ proves the last claim.
\end{proof}

\section{Proof of a non-smooth Bonk--Heinonen theorem}

To prove Theorem \ref{thm: main} we introduce a norm $|\cdot|:H_k(X)\to [0,\infty)$ on the homology group $H_k(X)$ by \[ c\mapsto\inf\{ M(T):T\in c \}. \] By Lemma \ref{mini} each homology class $c\in H_k(X)$ contains an element of minimal norm, and in particular $|c|>0$ if and only if $c\ne 0$.

\begin{proof}[Proof of Theorem \ref{thm: main}]
	By scaling the map and the metric of the space $X$ we may assume $D=\diam X=1$. Let $m=\dim H_k(X)$. Then there exists linearly independent homology classes $[T_1],\ldots [T_m]\in H_k(X)$ satisfying
	\[ |[T_i]|=M(T_i)=1\textrm{ and } |[T_i]-[T_j]|\ge 1/2\textrm{ if }i\ne j,  \] for $i,j\in \{1,\ldots,m\}$. Let $S_{ij}$ be mass minimizers in the homology class $[T_i-T_j]$. By Lemma \ref{mini}, we have \[ 1/2\le |[T_i-T_j]|=M(S_{ij})=\F(S_{ij})\le \F(T_i-T_j) \] for $i\ne j$. Let $R=2C$, where $C$ is the constant in Theorem \ref{equidist}. By Theorem \ref{estimate}, there are constants $a=a(n,L)>0$ and  $b=b(n,L)>0$, depending only on $n$ and $L$, for which 
	\begin{align}\label{b1}
		\F_{B(R)}(f^\ast T_i-f^\ast T_j)\gtrsim_{n,L}A_f(R)\F(T_i-T_j)\ge a(n,L)\frac{R^n}{|X|}
	\end{align}
	and 
	\begin{align}\label{b2}
		\|f^\ast T_i\|(B_R)\simeq_{n,L} A_f(R)\ge b(n,L)\frac{R^n}{|X|}
	\end{align}
	for all $i,j\in \{1,\ldots,m\},\ i\ne j$.
	
	Let $\eta:\R^n\to\R$ be the Lipschitz function $x\mapsto (1-\dist(B(R),x))_+$, and define $\displaystyle S_i:=|X|(f^\ast T_i)\lfloor\eta$ for each $i=1,\ldots,m$. Note that $\spt\eta\subset B(R+1)=B(2C+1)$. Thus $S_i$ is supported in $B(2C+1)$ for each $i=1,\ldots,m$. By (\ref{b2}) 
	\[ 
	M(S_i)=N(S_i)\le b(2C+1)^n  
	\]
	for each $i=1,\ldots,m.$ Moreover, by (\ref{b1}) we have \[ \F_{B(2C+1)}(S_i-S_j)\ge \F_{B(2C)}(S_i-S_j)\ge  a(2C)^n\] whenever $i\ne j$; cf Lemma \ref{flatprod}.
	
	Thus
	\begin{equation}\label{compe}
		\{ S_1,\ldots,S_m \}\subset N(\bar B(2C+1),b(2C+1)^n),
	\end{equation} where, for an compact set $K\subset \R^n$, and $\lambda\ge 0$, $$N(K,\lambda)=\{T\in N_k(\R^n):\spt T\subset U,\ N(T)\le \lambda \}. $$ By Theorem \ref{compactness} the right-hand side in (\ref{compe}) is compact in $\F_{B(2C+1)}$. Therefore there is an upper bound $m(n,L)$, depending only on $n$ and $L$, on the cardinality of a finite set $\mathcal S$ in $N(B(2C+1),b(2C+1)^n)$ having the property that \[ \F_{B(2C+1)}(S-S')\ge b(2C)^n \] whenever $S,S'\in \mathcal S$ and $S\neq S'$. We conclude that $m=\dim H_k(X)\le m(n,L)$. The proof is complete.
\end{proof}

\appendix

\section{Local Euclidean bilipschitz embeddability of BLD-elliptic spaces}\label{appendix}

In this appendix we prove the following embeddability theorem mentioned in the introduction.

\begin{theorem}\label{lem:almgren}
	Let $X$ be a locally geodesic, orientable cohomology manifold admitting a BLD-map $f:\R^n\to X$. Let $x\in X$. For every radius $r>0$, for which there exists $y\in f\inv(x)$ such that $U(y,r)$ is a normal neighborhood of $y$, $B_r(x)$ is bilipschitz equivalent to a subset of a Euclidean space.
\end{theorem}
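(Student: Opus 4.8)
The plan is to realise $B_r(x)$ as the image of a $Q$-valued map into $\mathbb{R}^n$ and then appeal to Almgren's bi-Lipschitz embedding theorem for spaces of unordered tuples. First I would fix $y\in f\inv(x)$ for which $U:=U_f(y,r)$ is a normal neighborhood and pass to the restriction $g:=f|_U\colon U\to B_r(x)$. By the normal domain property (Section~\ref{sec:branched_covers}), $g$ is a proper branched cover of $U$ onto $B_r(x)=f(U)$, it is $L$-BLD, $\overline U\subset\mathbb{R}^n$ is compact, and its degree is $Q:=i_f(y)=\mu_f(U,x)$; in particular, by \eqref{summation}, every fiber $g\inv(z)\subset U$ is finite with $\sum_{w\in g\inv(z)}i_g(w)=Q$. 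Writing $\mathcal{A}_Q(\mathbb{R}^n)$ for the metric space of positive-integer combinations $\sum_i n_i\delta_{p_i}$ of Dirac masses in $\mathbb{R}^n$ of total mass $Q$, equipped with the $L^2$-type distance $\mathcal{G}$, I would then define
\[
\Phi\colon B_r(x)\longrightarrow \mathcal{A}_Q(\mathbb{R}^n),\qquad \Phi(z)=\sum_{w\in g\inv(z)}i_g(w)\,\delta_w,
\]
and observe that $\Phi$ is continuous by a standard compactness argument using properness of $g$ on $\overline U$.

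The core of the argument is to show that $\Phi$ is a bi-Lipschitz embedding onto its image. The lower estimate is essentially free: applying \eqref{BLD} to Euclidean segments shows that $f$, hence $g$, is $L$-Lipschitz, so for $z,z'\in B_r(x)$ and any pair $(w,w')$ matched in an optimal coupling realising $\mathcal{G}(\Phi(z),\Phi(z'))$ one has $d(z,z')=d(f(w),f(w'))\le L|w-w'|\le L\,\mathcal{G}(\Phi(z),\Phi(z'))$. For the opposite estimate I would use path lifting. Given $z,z'\in B_r(x)$ and a path $\gamma$ in $B_r(x)$ joining them, every maximal $f$-lift of $\gamma$ starting at a point of $g\inv(z)$ is a connected subset of $f\inv(B_r(x))$ meeting $U$; since $U=U_f(y,r)$ is by definition a \emph{connected component} of $f\inv(B_r(x))$ this lift stays in $U$, and it is complete because its length is at most $L\,\ell(\gamma)<\infty$ and so it cannot run off to infinity in $\mathbb{R}^n$. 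By Rickman's path-lifting theory (cf.\ \cite{ric93} and \cite[Lemma~4.4]{lui17}) the total lift of $\gamma$ out of the $Q$-point $\Phi(z)$ terminates at the $Q$-point $\Phi(z')$, with each of the $Q$ component lifts $\gamma'$ satisfying $\ell(\gamma')\le L\,\ell(\gamma)$ by \eqref{BLD}. Choosing $\gamma$ arbitrarily close to a geodesic of $X$ joining $z,z'$ — which exists and, once $z,z'$ are close, has image in $B_r(x)$ by local geodesicity — and recording endpoints yields a coupling of $\Phi(z)$ and $\Phi(z')$ showing $\mathcal{G}(\Phi(z),\Phi(z'))\le \sqrt{Q}\,L\,d(z,z')$ for such pairs; for far-apart pairs the same bound holds trivially with a larger constant since $\diam\overline U<\infty$, and the general case follows by continuity of $\Phi$ and density of $B_r(x)\setminus g(B_g)$, where $B_g$ denotes the branch set of $g$, using Lemma~\ref{bijection} on the complement of this branch set.

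Having established that $\Phi$ is bi-Lipschitz onto its image, I would finish by invoking Almgren's embedding theorem \cite{alm2000} (see also \cite{del11}): there exist $N=N(Q,n)\in\mathbb{N}$ and a map $\boldsymbol{\xi}\colon\mathcal{A}_Q(\mathbb{R}^n)\to\mathbb{R}^{N}$ which is a bi-Lipschitz embedding onto its image. Then $\boldsymbol{\xi}\circ\Phi\colon B_r(x)\to\mathbb{R}^{N}$ is a bi-Lipschitz embedding, which is exactly the assertion of the theorem.

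I expect the main obstacle to be the uniformity of the upper Lipschitz bound for $\Phi$. Three issues demand care: running the lifting argument through the branch set of $g$, which is handled by the fiber bijections provided by path lifting (Lemma~\ref{bijection}); the fact that $B_r(x)$ with the metric restricted from $X$ need not be a length space, so that near-diagonal estimates cannot simply be chained and must be combined with the trivial diameter estimate for distant pairs and then extended by continuity of $\Phi$; and checking that lifts of paths contained in $B_r(x)$ cannot leave $U$ — this is precisely where it matters that $U=U_f(y,r)$ is a connected component of $f\inv(B_r(x))$, not merely a normal domain.
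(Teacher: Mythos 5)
Your proposal is correct and follows essentially the same route as the paper: the paper defines exactly this $Q$-valued map $z\mapsto\sum_{w\in f\inv(z)\cap U}i_f(w)\delta_w$ into $\mathcal A_Q(\R^n)$, proves it is bilipschitz using the fiber bijection of Lemma~\ref{bijection} (your path-lifting argument is precisely the content of that lemma) together with the $L$-Lipschitz estimate on Euclidean segments for the other direction, extends from the dense set off the branch image by continuity, and concludes with Almgren's embedding $\xi\colon\mathcal A_Q(\R^n)\to\R^N$ from \cite{del11}. The extra care you take about lifts remaining in the component $U$ and about the near/far dichotomy is sound and only makes explicit what the paper leaves implicit.
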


In the proof we use Almgren's theory of $Q$-valued maps. We refer to \cite{del11} for a recent exposition. Denote by $\mathcal A_Q(\R^n)$ the space of \emph{unordered} $Q$-tuples of points in $\R^n$. For the purpose of introducing a metric, we formally define 
\[
\mathcal A_Q(\R^n)=\left\{ \sum_{i=1}^Q\delta_{x_i}: x_1,\ldots,x_Q\in\R^n \right\},
\]
where $\delta_x$ is the Dirac mass at $x\in\R^n$. Given $T_1,T_2\in \mathcal A_Q(\R^n)$, suppose
\[
T_1=\sum_{i=1}^Q\delta_{x_i},\quad T_2=\sum_{i=1}^Q\delta_{y_i},
\]
and define
\[
d_Q(T_1,T_2)=\min\left\{\left(\sum_{i=1}^Q|x_i-y_{\sigma(i)}|^2\right)^{1/2}: \sigma\in S_Q \right\},
\]
where $S_Q$ denotes the set of permutations of $\{1,\ldots, Q\}$. A key property of $\mathcal A_Q(\R^n)$ is the following bilipschitz embedding result.

\begin{theorem}\cite[Theorem 2.1]{del11} \label{bilipemb}
	There exists $N=N(Q,n)$ and a bilipschitz map $$\xi:\mathcal A_Q(\R^n)\to \R^N.$$
\end{theorem}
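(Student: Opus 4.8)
The plan is to realize $\xi$ as an explicit \emph{projection-and-sorting} map and then verify the two-sided Lipschitz estimate directly, the upper bound being elementary and the lower bound constituting the real work. The basic building block is the one-dimensional case: for $n=1$ the non-decreasing rearrangement $s\colon\mathcal A_Q(\R)\to\R^Q$, sending $\sum_i\delta_{t_i}$ to the sorted tuple $(t_{(1)}\le\cdots\le t_{(Q)})$, is an \emph{isometry} onto the cone $\{u_1\le\cdots\le u_Q\}$. Indeed, the classical rearrangement inequality says that among all $\sigma\in S_Q$ the identity minimizes $\sum_i(t_{(i)}-u_{\sigma(i)})^2$ once both tuples are sorted, which is exactly the assertion $d_Q(T,S)=|s(T)-s(S)|$ in dimension one. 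For a direction $e\in\R^n$, the projection $\pi_e$ induced by $P\mapsto\langle P,e\rangle$ is $|e|$-Lipschitz from $\mathcal A_Q(\R^n)$ to $\mathcal A_Q(\R)$, since any matching has its cost scaled by at most $|e|^2$ under projection. Fixing a finite family $e_1,\dots,e_K\in\R^n$ I would set
\[
\xi(T)=\bigl(s(\pi_{e_1}T),\dots,s(\pi_{e_K}T)\bigr)\in(\R^Q)^K.
\]
Each block $s\circ\pi_{e_h}$ is $|e_h|$-Lipschitz, so $\xi$ is Lipschitz with constant $(\sum_h|e_h|^2)^{1/2}$; this gives one inequality for free.

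The substance is the lower bound $d_Q(T,S)\le C\,|\xi(T)-\xi(S)|$, which I would prove by induction on $Q$ after arranging the direction set to be simultaneously adequate for every cardinality $Q'\le Q$. The case $Q=1$ is trivial, since $\mathcal A_1(\R^n)=\R^n$ and $\xi$ is a linear image of the identity. For the inductive step, write $d:=d_Q(T,S)$ and run a dichotomy according to the diameter of $T$ relative to $d$, with a threshold $\Lambda=\Lambda(Q,n)$ to be fixed. If $\diam(T)\le\Lambda d$, then also $\diam(S)\le(\Lambda+2)d$ (each point moves by at most $d$ under the optimal matching), so after translating the barycentre to $0$ and scaling to make $d=1$ both configurations lie in a fixed ball. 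The crucial point is that differences of $\xi$ are \emph{translation invariant}, because $s(\pi_e(T+a))=s(\pi_eT)+\langle a,e\rangle\mathbf 1$, and positively $1$-homogeneous under dilation, exactly as $d_Q$ is; hence the normalized pairs range over a compact subset of $\mathcal A_Q(\R^n)\times\mathcal A_Q(\R^n)$ on which $d_Q=1$. There the continuous, \emph{injective} map $\xi$ has $|\xi(T)-\xi(S)|$ bounded below by a positive constant, and unscaling yields the estimate.

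In the complementary regime $\diam(T)>\Lambda d$, a pigeonhole argument produces the needed structure: choosing the directions to include a fine net of $S^{n-1}$, some $e_h$ realizes the diameter up to a fixed factor, and since the $Q-1$ consecutive gaps of the sorted projection $s(\pi_{e_h}T)$ sum to the spread, the largest gap exceeds $\diam(T)/(c(Q-1))>d$ once $\Lambda\gtrsim Q$. This gap splits the support of $T$ into two clusters $T=T_1+T_2$ with $Q_1+Q_2=Q$, and because the gap dwarfs $d$ the optimal matching respects the clusters; thus $S=S_1+S_2$ correspondingly and $d^2=d_{Q_1}(T_1,S_1)^2+d_{Q_2}(T_2,S_2)^2$. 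I would then apply the inductive hypothesis to each sub-configuration, whose cardinalities are strictly smaller, the chosen directions remaining adequate by construction, and control the sub-configuration data by the full $\xi$-data through the decomposition of the $e_h$-block. Injectivity of $\xi$, needed in the clustered case, also has to be secured by the choice of directions: coordinate projections alone do not separate configurations such as $\{(0,0),(1,1)\}$ and $\{(0,1),(1,0)\}$, so I would take enough generic directions that the family of one-dimensional projection multisets determines the unordered tuple, a finite-tomography fact provable by genericity.

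\textbf{The main obstacle} is precisely the coupling, across the two regimes, between sorting---which discards the cross-direction pairing of points---and the genuine $n$-dimensional matching distance $d_Q$. The induction on $Q$ is what converts the scale-by-scale compactness of the clustered case into a uniform global bound on the non-compact space $\mathcal A_Q(\R^n)$, and the delicate bookkeeping lies in choosing $\Lambda$, the net of directions, and the constants so that the separated and clustered thresholds are mutually compatible and the sub-configuration embeddings are genuinely inherited from the ambient one. Carrying this out is exactly the construction of \cite[Theorem 2.1]{del11}, whose proof I have outlined.
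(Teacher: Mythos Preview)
The paper does not prove this statement: it is cited from \cite[Theorem 2.1]{del11} and invoked as a black box in the proof of Theorem~\ref{lem:almgren}. Your proposal is an outline of the Almgren/De~Lellis--Spadaro construction in that cited reference, as you yourself acknowledge in the closing sentence, so there is no proof in the present paper against which to compare it.
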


Let  $x\in X$ and $r>0$, and suppose that $y\in f\inv(x)$ has the property that $U=:U(y,r)$ is a normal neighborhood of $y$. Set $Q=i_f(y)$. We define a $Q$-valued map $g_f:B_r(x)\to \mathcal A_Q(\R^n)$ by
\begin{equation}\label{qvalued}
	x\mapsto \sum_{z\in f\inv(x)\cap U}i_f(z)\delta_z.
\end{equation}

\begin{lemma}
	Let $f:X\to Y$ be a BLD-map between locally geodesic, oriented cohomology manifolds. Then, the map $g:B_r(x)\to \mathcal A_Q(\R^n)$ is a bilipschitz embedding.
\end{lemma}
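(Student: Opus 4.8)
The plan is to verify that the map $g=g_f$ of \eqref{qvalued} is injective, $L$-co-Lipschitz, and $C(L,Q)$-Lipschitz; composing it with the bilipschitz map $\xi$ of Theorem~\ref{bilipemb} then proves Theorem~\ref{lem:almgren}. Write $B=B_r(x)$, and recall that, $U=U_f(y,r)$ being a normal neighbourhood of $y$, the restriction $f|_{\overline U}\colon\overline U\to\overline B$ is a proper branched cover with $\overline U$ compact, $f(U)=B$ and $f(\partial U)=\partial B$. First, well-definedness and injectivity: since $B$ is connected and disjoint from $f(\partial U)$, the local degree $\mu_f(U,\cdot)$ is constant on $B$ and equals $\mu_f(U,x)=i_f(y)=Q$, so by the summation formula \eqref{summation} we get $\sum_{z\in f\inv(w)\cap U}i_f(z)=Q$ for every $w\in B$; hence $g(w)$ is a well-defined point of $\mathcal{A}_Q(\R^n)$. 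If $g(w)=g(w')$ then $f\inv(w)\cap U=f\inv(w')\cap U\ne\varnothing$ as sets, so $w=f(z)=w'$ for any $z$ in this common fibre, and $g$ is injective.

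The co-Lipschitz bound is immediate: as $\R^n$ is geodesic, \eqref{BLD} makes $f$ an $L$-Lipschitz map, so $d(w,w')=d(f(z),f(z'))\le L|z-z'|$ for every $z\in f\inv(w)\cap U$ and $z'\in f\inv(w')\cap U$. Thus every entry in every matching defining $d_Q(g(w),g(w'))$ is at least $d(w,w')/L$, giving $d_Q(g(w),g(w'))\ge\sqrt Q\,d(w,w')/L\ge d(w,w')/L$.

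The Lipschitz bound is the main step: one must produce, for $w,w'\in B$, a matching of $f\inv(w)\cap U$ with $f\inv(w')\cap U$ whose pairs lie within $\lesssim_L d(w,w')$ of each other. Assume first $w,w'\notin fB_f$. By local geodesity of $X$, for $w,w'$ close there is a geodesic of $X$ joining them contained in a slightly larger normal neighbourhood $U'=U_f(y,r+\varepsilon)$ of $y$ (more precisely, in $f(U')$), which exists by Lemma~\ref{spread}; by Lemma~\ref{spread}(b), $f\inv(w)\cap U'=f\inv(w)\cap U$ for $w\in B$, so passing to $U'$ loses nothing. Applying Lemma~\ref{bijection} to the proper $L$-BLD map $f|_{U'}\colon U'\to f(U')$ with $K=\overline{U'}$ yields a bijection $\psi\colon f\inv(w)\cap U'\to f\inv(w')\cap U'$ with $|z-\psi(z)|\le L\,d(w,w')$, the implicit lift staying in $U'$ since $U'$ is a connected component of $f\inv(f(U'))$ and lifts are connected. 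As all indices are $1$ here, pairing $z$ with $\psi(z)$ is an admissible matching, so $d_Q(g(w),g(w'))\le\sqrt Q\,L\,d(w,w')$. Next, $g$ is continuous on all of $B$ — a standard property of branched covers, following from $\mu_f(U_f(z,\rho),\cdot)\equiv i_f(z)$ (Lemma~\ref{spread}), which forces $f\inv(w)\cap U$ to converge, with multiplicities, to $f\inv(w_0)\cap U$ as $w\to w_0$. Since the pairs $(w,w')$ with $w,w'\notin fB_f$ are dense and both sides of $d_Q(g(w),g(w'))\le\sqrt Q\,L\,d(w,w')$ are continuous, the estimate persists for all $w,w'\in B$ (chaining along short subarcs in $B$ when they are not close, using local quasiconvexity of the spaces in question, cf.\ Remark~\ref{rem:bound}).

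The main obstacle is exactly this Lipschitz estimate, for two reasons: one must join nearby points of $B$ by paths of comparable length along which the BLD path-lifting of Lemma~\ref{bijection} can be applied — handled via local geodesity of $X$ and the passage to the enlarged normal neighbourhood $U'$ — and one must extend the bound across the branch set $fB_f$, which is handled by continuity of the fibre map $g$. Granting the lemma, Theorem~\ref{lem:almgren} follows: $\xi\circ g\colon B_r(x)\to\R^N$ is a bilipschitz embedding.
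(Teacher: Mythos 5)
Your proof is correct and follows essentially the same route as the paper: the lower bound comes from the $L$-Lipschitz property of $f$ along Euclidean segments, the upper bound from the path-lifting bijection of Lemma~\ref{bijection} applied to fibres over points outside $fB_f$, and the general case follows by density and continuity. The extra care you take with well-definedness, injectivity, restricting the bijection to the normal neighbourhood, and the chaining/continuity step only fills in details the paper leaves implicit.
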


\begin{proof}
	Let $p,q\in B_r(x)\setminus fB_f$. By Lemma \ref{bijection}  and its proof, there is a bijection $$\psi:f\inv(p)\cap U\to f\inv(q)\cap U$$ satisfying $$d(p,q)/L\le d(z,\psi(z))\le Ld(p,q)$$ for each $x\in f\inv(p)\cap U$. Thus
	\[
	d_Q(g_f(p),g_f(q))\le \left(\sum_{z\in f\inv(p)\cap U}d(z,\psi(z))^2\right)^{1/2}\le L\sqrt Q d(p,q).
	\]
	For the opposite inequality let $f\inv(p)\cap U=\{x_1,\ldots,x_Q \}$, $f\inv(q)\cap U=\{ y_1,\ldots,y_Q \}$, and $\sigma\in S_Q$. Then, for each $i=1,\ldots,Q$,
	\[
	|x_i-y_{\sigma(i)}|\ge \frac 1L \ell(f\circ [x_i,y_{\sigma(i)}])\ge \frac  1L d(p,q),
	\]
	where $[x_i,y_{\sigma(i)}]$ denotes the geodesic line segment from $x_i$ to $y_{\sigma(i)}$. Thus \[ d_Q(g_f(p),g_f(q))\ge \sqrt Qd(p,q)/L. \]
	
	We have established the bilipschitz condition for points $p,q$ in the dense set $B_r(x)\setminus fB_f$, whence it follows for all $p,q\in B_r(x)$.
\end{proof}
\begin{proof}[Proof of Theorem \ref{lem:almgren}]
	Let $x\in X$ and let $r>0$ be a radius with the property that there exists $y\in f\inv(x)$ for which $U=U(y,r)$ is a normal neighborhood of $y$. Set $Q=i_f(y)$ and consider the map $g_f:B_r(x)\to \mathcal A_Q(\R^n)$. Then the map $$\xi\circ g:B_r(x)\to \R^{N},$$ where $\xi:\mathcal A_Q(\R^n)\to \R^N$ is the map of Theorem \ref{bilipemb}, is bilipschitz.
\end{proof}

\begin{remark}
	Since the index $i_f$ of a BLD-map $f:\R^n\to X$ is bounded by a constant depending only on $n$ and $L$, the bilipschitz constant of $\xi\circ g$ and the dimension $N$ are also bounded by constants depending only on $n$ and $L$. 
\end{remark}

\bibliographystyle{plain}
\bibliography{abib}

\def\cprime{$'$}
\begin{thebibliography}{10}

\bibitem{aal17}
Martina Aaltonen and Pekka Pankka.
\newblock {Local monodromy of branched covers and dimension of the branch set}.
\newblock {\em Ann. Acad. Sci. Fenn. Math.}, 42(1):487--496, 2017.

\bibitem{abr02}
Y.~A. Abramovich and C.~D. Aliprantis.
\newblock {\em {An invitation to operator theory}}, volume~50 of {\em {Graduate
  Studies in Mathematics}}.
\newblock American Mathematical Society, Providence, RI, 2002.

\bibitem{alm2000}
Frederick~J. {Almgren Jr.}
\newblock {\em {Almgren's big regularity paper}}, volume~1 of {\em {World
  Scientific Monograph Series in Mathematics}}.
\newblock World Scientific Publishing Co., Inc., River Edge, NJ, 2000.
\newblock $Q$-valued functions minimizing Dirichlet's integral and the
  regularity of area-minimizing rectifiable currents up to codimension 2, With
  a preface by Jean E. Taylor and Vladimir Scheffer.

\bibitem{amb00}
Luigi Ambrosio and Bernd Kirchheim.
\newblock {Currents in metric spaces}.
\newblock {\em Acta Math.}, 185(1):1--80, 2000.

\bibitem{bonk01}
Mario Bonk and Juha Heinonen.
\newblock {Quasiregular mappings and cohomology}.
\newblock {\em Acta Math.}, 186(2):219--238, 2001.

\bibitem{bredon97}
Glen~E. Bredon.
\newblock {\em {Sheaf theory}}, volume 170 of {\em {Graduate Texts in
  Mathematics}}.
\newblock Springer-Verlag, New York, second edition, 1997.

\bibitem{del11}
Camillo {De Lellis} and Emanuele~Nunzio Spadaro.
\newblock {{$Q$}-valued functions revisited}.
\newblock {\em Mem. Amer. Math. Soc.}, 211(991):vi+79, 2011.

\bibitem{dep17}
Th. {De Pauw}, R.~M. Hardt, and W.~F. Pfeffer.
\newblock {Homology of normal chains and cohomology of charges}.
\newblock {\em Mem. Amer. Math. Soc.}, 247(1172):v+115, 2017.

\bibitem{fed60}
Herbert Federer and Wendell~H. Fleming.
\newblock {Normal and integral currents}.
\newblock {\em Ann. of Math. (2)}, 72:458--520, 1960.

\bibitem{gro07}
Misha Gromov.
\newblock {\em {Metric structures for {R}iemannian and non-{R}iemannian
  spaces}}.
\newblock {Modern Birkh{\"a}user Classics}. Birkh{\"a}user Boston Inc., Boston,
  MA, english edition, 2007.
\newblock Based on the 1981 French original, With appendices by M. Katz, P.
  Pansu and S. Semmes, Translated from the French by Sean Michael Bates.

\bibitem{hat02}
Allen Hatcher.
\newblock {\em {Algebraic topology}}.
\newblock Cambridge University Press, Cambridge, 2002.

\bibitem{HKST07}
Juha Heinonen, Pekka Koskela, Nageswari Shanmugalingam, and Jeremy Tyson.
\newblock {\em {Sobolev spaces on metric measure spaces: an approach based on
  upper gradients}}.
\newblock {New Mathematical Monographs}. Cambridge University Press, United
  Kingdom, first edition, 2015.

\bibitem{hei02}
Juha Heinonen and Seppo Rickman.
\newblock {Geometric branched covers between generalized manifolds}.
\newblock {\em Duke Math. J.}, 113(3):465--529, 2002.

\bibitem{hu65}
Sze-tsen Hu.
\newblock {\em {Theory of retracts}}.
\newblock Wayne State University Press, Detroit, 1965.

\bibitem{kan17}
I.~{Kangasniemi}.
\newblock {Sharp cohomological bound for uniformly quasiregularly elliptic
  manifolds}.
\newblock {\em ArXiv e-prints}, November 2017.

\bibitem{Kangasniemi-Pankka}
Ilmari Kangasniemi and Pekka Pankka.
\newblock {Uniform cohomological expansion of uniformly quasiregular mappings}.
\newblock {\em Proc. London. Math. Soc. (to appear) arXiv preprint
  arXiv:1708.01451}, 2017.

\bibitem{kir94}
Bernd Kirchheim.
\newblock {Rectifiable metric spaces: local structure and regularity of the
  {H}ausdorff measure}.
\newblock {\em Proc. Amer. Math. Soc.}, 121(1):113--123, 1994.

\bibitem{lan11}
Urs Lang.
\newblock {Local currents in metric spaces}.
\newblock {\em J. Geom. Anal.}, 21(3):683--742, 2011.

\bibitem{lui17}
Rami Luisto.
\newblock {A characterization of {BLD}-mappings between metric spaces}.
\newblock {\em J. Geom. Anal.}, 27(3):2081--2097, 2017.

\bibitem{mar88}
O.~Martio and J.~V{\"a}is{\"a}l{\"a}.
\newblock {Elliptic equations and maps of bounded length distortion}.
\newblock {\em Math. Ann.}, 282(3):423--443, 1988.

\bibitem{mat79}
P.~Mattila and S.~Rickman.
\newblock {Averages of the counting function of a quasiregular mapping}.
\newblock {\em Acta Math.}, 143(3-4):273--305, 1979.

\bibitem{mit15}
Ayato Mitsuishi.
\newblock {The coincidence of the current homology and the measure homology via
  a new topology on spaces of Lipschitz maps}.
\newblock arXiv:1403.5518 [math.AT].

\bibitem{onn09}
Jani Onninen and Kai Rajala.
\newblock {Quasiregular mappings to generalized manifolds}.
\newblock {\em J. Anal. Math.}, 109:33--79, 2009.

\bibitem{pan10}
Pekka Pankka.
\newblock {Mappings of bounded mean distortion and cohomology}.
\newblock {\em Geom. Funct. Anal.}, 20(1):229--242, 2010.

\bibitem{teripekka1}
Pekka Pankka and Elefterios Soultanis.
\newblock {Metric currents and Polylipschitz forms}.
\newblock arXiv:1902.06106 [math.MG].

\bibitem{pry18}
Eden Prywes.
\newblock {A Bound on the Cohomology of Quasiregularly Elliptic Manifolds}.
\newblock {\em Preprint}.
\newblock arXiv:1806.05306 [math.DG].

\bibitem{ric93}
Seppo Rickman.
\newblock {\em {Quasiregular mappings}}, volume~26 of {\em {Ergebnisse der
  Mathematik und ihrer Grenzgebiete (3) [Results in Mathematics and Related
  Areas (3)]}}.
\newblock Springer-Verlag, Berlin, 1993.

\bibitem{vai66}
Jussi V{\"a}is{\"a}l{\"a}.
\newblock {Discrete open mappings on manifolds}.
\newblock {\em Ann. Acad. Sci. Fenn. Ser. A I No.}, 392:10, 1966.

\bibitem{war83}
Frank~W. Warner.
\newblock {\em {Foundations of differentiable manifolds and {L}ie groups}},
  volume~94 of {\em {Graduate Texts in Mathematics}}.
\newblock Springer-Verlag, New York-Berlin, 1983.
\newblock Corrected reprint of the 1971 edition.

\bibitem{wen05}
Stefan Wenger.
\newblock {Isoperimetric inequalities of Euclidean type in metric spaces}.
\newblock {\em GAFA}, 15(2):534--554, 2005.

\bibitem{wen07}
Stefan Wenger.
\newblock {Flat convergence for integral currents in metric spaces}.
\newblock {\em Calc. Var. Partial Differential Equations}, 28(2):139--160,
  2007.

\end{thebibliography}

\end{document}